\setlist[enumerate]{label={\upshape(\roman*)}}
\newtheorem{theorem}{Theorem}[section]
\newtheorem{lemma}[theorem]{Lemma}
\newtheorem{corollary}[theorem]{Corollary}
\newtheorem{proposition}[theorem]{Proposition}
\theoremstyle{remark}
\newtheorem{definition}[theorem]{Definition}
\newtheorem{example}[theorem]{Example}
\newtheorem{question}{Question}
\newtheorem{problem}[theorem]{Problem}
\newtheorem{remark}[theorem]{Remark}
\newtheorem*{remar}{Remark}
\newcommand\openbigstar[1][0.65]{%
  \scalerel*{%
    \stackinset{c}{0pt}{c}{}{\scalebox{#1}{\color{white}{$\bigstar$}}}{%
      $\bigstar$}%
  }{\bigstar}
}
\newcommand{\BH}{\mathcal{B}(\Hil)}
\newcommand{\KH}{\mathcal{K}(\Hil)}
\newcommand{\Hil}{\mathcal{H}}
\DeclareMathOperator{\diag}{diag}
\DeclareMathOperator{\trace}{tr}
\DeclareMathOperator{\rank}{rank}
\newcommand{\seq}{\delim<>}
\newcommand{\ideal}{\delim()}
\newcommand{\comm}{\delimpair{[}{[.],}{]}}
\newcommand{\I}{\mathcal{I}}
\newcommand{\J}{\mathcal{J}}
\newcommand{\schatten}[1]{\mathcal{C}_{#1}}
\newcommand{\traceclass}{\schatten{1}}
\newcommand{\charset}{\Sigma}
\renewcommand{\star}{\openbigstar}
\newcommand{\czstar}{c_0^{\star}}
\newcommand{\term}[1]{\emph{#1}}
\numberwithin{equation}{section}
\begin{document}
\title[On Commutators of Compact Operators]{On commutators of compact operators via block tridiagonalization: generalizations and limitations of Anderson's approach}

\author{Jireh Loreaux}
\address{J.~Loreaux, Department of Mathematics and Statistics, Southern Illinois University Edwardsville, Edwardsville, IL 62026 (USA)}
\email{jloreau@siue.edu}
\author{Sasmita Patnaik*}
\address{S.~Patnaik, Department of Mathematics and Statistics, Indian Institute of Technology, Kanpur 208016 (INDIA)}
\email{sasmita@iitk.ac.in}
\author{Srdjan Petrovic}
\address{S.~Petrovic, Department of Mathematics, Western Michigan University, Kalamazoo, MI 49008 (USA)}
\email{srdjan.petrovic@wmich.edu}
\author{Gary Weiss**}
\address{G.~Weiss, Department of Mathematics, University of Cincinnati, Cincinnati, OH 45221-0025 (USA)}
\email{gary.weiss@uc.edu}

\thanks{*Supported by Science and Engineering Research Board, Core Research Grant 002514 \quad\\
  **Partially supported by Simons Foundation collaboration grants 245014 and 636554}

\maketitle

\begin{abstract}
  We offer a new perspective and some advances on the 1971 Pearcy--Topping problem: Is every compact operator a commutator of compact operators?
  Our goal is to analyze and generalize the 1970's work in this area of Joel Anderson combined with the work of the last named author of this paper.
  We reduce the general problem to a simpler sequence of finite matrix equations with norm constraints, while at the same time developing strategies for counterexamples.
  Our approach is to ask which compact operators $T$ are commutators $AB-BA$ of compact operators $A,B$; and to analyze the implications of Joel Anderson's contributions to this problem, which will yield a generalization of his method.
  By extending the techniques of Anderson \cite{And-1977-JRAM}  we obtain new classes of operators that are commutators of compact operators beyond those obtained in \cite{Pat-2012} and \cite{BPW-2014-VLOT}.
  And by employing the techniques of the last named author \cite{Wei-1980-IEOT}, we found obstructions to extending Anderson's techniques in terms of certain constraints for $T$, with special focus on when $T$ is a strictly positive compact diagonal operator.
  Some of these constraints involve general universal block tridiagonal matrix forms for operators, and some involve $\BH$-ideal constraints.
  And in terms of these matrix forms, we give some equivalences, some sufficient conditions and some necessary conditions for this Pearcy--Topping problem and its various offshoots to hold true.
  These matrix forms are a sparsification of matrix representations of an operator (an increase in the proportion of zeros in its corners by a change of basis) and we measure the support density of these forms.
  And finally we provide some necessary conditions for the Pearcy--Topping problem involving singular numbers and $\BH$-ideal constraints.
\end{abstract}

\section{Introduction and Historical Survey}
The  \textbf{classical commutator problem} asks which $\BH$ operators are commutators, i.e., operators of the form
$\comm{A}{B}=AB-BA$, for $\Hil$ an infinite dimensional, separable, complex Hilbert space.
This was settled by Brown and Pearcy \cite{BP-1965-AoM2}: An operator is a commutator if and only if it it is not a thin operator, i.e.,
not of the form $\lambda+K$, where $\lambda$ is a nonzero complex number and $K$ is compact.
Following the Brown--Pearcy characterization of $\BH$-commutators \cite{BP-1965-AoM2}, much of the research on single commutators was focused on special situations when operators $A,B$ belong to various ideals.
Later research mostly focused on commutator ideals in $\BH$, $\comm{\I}{\J}$, i.e., the finite  linear span of commutators of operators from $\I$ with operators from $\J$, in part because of the connection of $\comm{\I}{\BH}$ to traces.
(Chronologically, see \cite{PT-1971-MMJ,Wei-1975,And-1977-JRAM,Wei-1980-IEOT,Kal-1989-JFA,DFWW-2004-AM,KW-2010-JOT,DK-2018-PAMS}.)
The general characterization of commutator ideals $\comm{\I}{\J}$,  was obtained in \cite[Theorem 5.6]{DFWW-2004-AM} (or more simply distilled from \cite[Introduction pp. 2--3 and remark following]{DFWW-2004-AM}): a normal operator $T\in \I\J$
belongs to $\comm{\I}{\J}$ if and only if $T$ belongs to $\comm{\I\J}{\BH}$ if and only if
\begin{equation*}
  \abs*{\dfrac{\lambda_1+\lambda_2+\dots+\lambda_n}{n}} = O(\mu_n).
\end{equation*}
Here  $\I,\J$ are arbitrary $\BH$-ideals at least one of which is proper,
$\nu_n, \mu_n$ are monotone sequences of nonnegative numbers converging to 0 where $\diag(\nu_n), \diag(\mu_n)$ belong to the product ideal  $\I\J$,
and
$T$ has eigenvalue sequence $\seq{\lambda_n}$, the nonzero ones counted according to their multiplicities but taken in any order subject only to the condition that $\abs{\lambda_n}=O(\nu_n)$.
(The existence of such an order and sequence $\nu$ is equivalent to the condition $T \in \I\J$, which follows easily from the methods in \cite{DFWW-2004-AM}.)
A useful version of this characterization  can be stated for
any positive operator $T\in \I\J$ using its $s$-numbers (singular value sequence): a positive $T$ belongs to $\comm{\I}{\J}$ if and only if the diagonal operator
\begin{equation*}
  \diag \seq*{\dfrac{s_1+\dots+s_n}{n}} \in \I\J.
\end{equation*}
The fact that $s$-numbers are positive while eigenvalues may not be was important in \cite{DFWW-2004-AM} and \cite{KW-2010-JOT} and related work of the authors of \cite{KW-2010-JOT}, primarily because working on positive cones, meaning cones of positive operators was essential for some of their
results.

The single commutator problem for compact operators is another commutator question posed by Pearcy and Topping in 1971 which at the time of this writing remains open.

\begin{question}
  Is every compact operator a commutator of compact operators?
\end{question}

Seminal results on this and related questions appeared in the 1970's by Anderson \cite{And-1977-JRAM}, results by the last named author of this paper  \cite{Wei-1980-IEOT} settling \cite[Problem 3(i)]{PT-1971-MMJ}, and a natural test question posed by him.
Motivated by Anderson's rank one projection result in  \cite{And-1977-JRAM}, he asked: Can a strictly positive compact operator be a commutator of compact operators?
There followed 30 years later an affirmative solution to this test case in the unpublished work of Davidson--Marcoux--Radjavi \cite{DMR} and independently a little later, similar results of Beltita joint with the second and fourth authors herein \cite{BPW-2014-VLOT,Pat-2012}.
A little earlier, in  1994, L.~Brown, using block Hessenberg forms, had proved in \cite{Bro-1994-JRAM} that  no finite rank nonzero trace operator is a commutator of operators in Schatten classes $\schatten{p}$ and  $\schatten{q}$, when $1/p+1/q\ge 1/2$.
(In particular, there are operators in $\schatten{2}$ that are not  commutators of $\schatten{4}$ operators).
Then in  \cite[Section 7]{DFWW-2004-AM}  the authors in 2004 obtained a sufficient condition in terms of ideals $\I,\J$ in $\BH$ for an operator to be a single commutator of operators from $\I$ and $\J$ (we denote this class by $\comm{\I}{\J}_1$) in the commutator ideal $\comm{\I}{\J}$.
And recently in \cite{DK-2018-PAMS}, K.~Dykema and A.~Krishnaswamy-Usha proved that all nilpotent compact operators are commutators of compact operators.
From this, focus began on trying to represent specific compact operators $T$ and classes of compact operators as special kinds of commutators.
Constructions for these, it was hoped, would lead to a more general strategy which began with \cite{DMR,Pat-2012,BPW-2014-VLOT} chronologically.

This paper focuses mainly on representing normal compact operators (i.e., diagonal compact operators), and particularly strictly positive ones, as commutators of compact operators.
The latter are notoriously hard to represent, and the only known  diagonal sequence patterns obtainable have either infinitely many zeros (Anderson) or have each strictly posiitive eigenvalue repeated according to a certain rigid multiplicity structure (Davidson--Marcoux--Radjavi and Beltita--Patnaik--Weiss, see herein \Cref{t6,thm3.4'}). 
One of the central accomplishments of this paper (\Cref{sec3}) is
a modification of these constructions leading to the affirmative answer to a question arising from \cite[p. 9]{BPW-2014-VLOT}:
there are infinitely many strictly positive compact operators with distinct nonzero eigenvalues that are commutators of compact operators, thereby expanding the known class.
We hope from these constructions more general ones might evolve, perhaps leading to a full characterization of which normal compact operators are commutators of compact operators.

Next we address the question whether Anderson's commutator model \cite{And-1977-JRAM} can be used to obtain \emph{every} positive compact operator.
In \Cref{sec3a} we present several obstructions limiting this model.
Some of them have the flavor of majorization type constraints involving arithmetic means arising in the work of Ky Fan  \cite{Fan-1951-PNASUSA}.

To keep manageable the computations, following Anderson's approach, it was natural to use sparse matrices (sparse meaning with \emph{most or at least many} entries zero).
For instance, the commutator of an upper and a lower weighted shift is a diagonal operator, and this phenomenon has led to many important consequences in the early literature.
To sparsify a matrix (change basis to increase the zero regions) for a general $T$, and in particular for $T=AB-BA$, we construct an orthonormal basis for which the matrix of $T$ is {sparse}
and for which, when needed,
simultaneously, the $A$ and $B$ matrix forms must also be  sparse and of the same especially convenient form.
And especially when $A,B$ are sparse of this convenient form,
the computations of $AB$ and $BA$ become more manageable.
A change of basis corresponds to a unitary operator $U$, which yields the matrix  representation of $T$ in the new basis $\seq{Ue_n}$ as $U^{-1}TU$ in the original basis $\seq{e_n}$ of $T$.
This leads to the question: how sparse can $U^{-1}TU$ be made?
In \Cref{sec2} we lift from \cite{PPW-2020-TmloVLOt} a brief description of certain matrix sparsifications.
An in-depth study on this can be found there.
\Cref{sec2} also provides a new Pearcy--Topping commutator problem equivalence (\Cref{equivthm}) based on this that in a sense fully generalizes Anderson's approach.
Historically, Anderson's tour de force solved affirmatively the Pearcy--Topping test question that every rank one projection operator is a commutator of compact operators, and used this to prove that every compact operator is  a commutator of a compact operator with a bounded operator.
For this rank one projection case he employed block tridiagonal matrices with block sizes growing arithmetically.
And subsequently motivated by Anderson's model, our \cite{Wei-1980-IEOT} introduced general staircase forms which led us to general block tridiagonal forms with further matrix sparsifications in our precursor \cite{PPW-2020-TmloVLOt} to this paper.

A word about evolving block tridiagonal terminology used herein.
The displays \eqref{eq6}--\eqref{eq5} in \Cref{sec3} constitute Anderson's model (AM).
In \Cref{sec3a} we generalize (AM) to an exponential Anderson's model (EAM) by choosing instead exponentially growing block sizes whereas (AM) uses arithmetically growing block sizes.
And after that we use our universal block tridiagonal forms which generalize both.
The exponentially growing block sizes in our universal block tridiagonal forms apply to all operators and can be derived from any arbitrary basis.

In short, the organization of this paper is as follows.
In \Cref{sec3} we reiterate the classical Anderson's model that he used to obtain a rank one projection as a commutator of compact operators.
Our main result (\Cref{t7}), expanding on Anderson's model, is that with a controlled perturbation of his nonzero entries one obtains uncountably many diagonal compact operators  with \emph{distinct} strictly positive entries.
Along with their unitary equivalents, this expands the known class and solves a problem arising from \cite{BPW-2014-VLOT} that evolved from \cite{And-1977-JRAM,DMR,Pat-2012}.

In \Cref{sec3a} we show that not every compact diagonal operator can be obtained using Anderson's construction.
An obstruction comes from a necessary trace condition for Anderson's model to work.
We formulate this condition in two different ways, one of them having a flavor of majorization such as appears in Ky Fan's theorem \cite{Fan-1951-PNASUSA} (or see \cite[Lemma 4.1]{GK-1969-ITTTOLNO}).
We found that
the arithmetic growth of block sizes of Anderson's block tridiagonal form is a main culprit.
Using our universal block tridiagonal forms which have exponential block growth sizes, we consider an expanded version of Anderson's model using instead exponential growth size blocks to provide more degrees of freedom.
But even when the blocks are chosen to grow exponentially, we prove that this particular construction offers no improvement.

The study of this exponential Anderson model ties in naturally with the content of \Cref{sec2} where we, in a sense, fully generalize Anderson's model.
We show that given an abitrary finite set of operators and an arbitrary basis, one can derive from these a new basis so that all the operators simultaneously take block tridiagonal form with identical block sizes.
And these central (and hence all) block sizes grow exponentially (\Cref{t1,T1,t11}).
As a consequence we obtain a block tridiagonal equivalence to the Pearcy--Topping compact commutator problem (\Cref{equivthm}).
This result highlights the value of generalizing Anderson's construction.
But at the same time also explains in a sense its limitations.
And explains the fact, established in \Cref{sec3a}, that the obstruction detected there remains in force when the finite diagonal matrix blocks (with arithmetic growth) of the block tridiagonal operators employed are instead full finite matrix blocks.
(Observe that Anderson's block tridiagonal matrix solutions possess merely two nonzero block diagonals.
Whereas here we consider fully general block tridiagonal forms.)
We present several modifications of the construction in \Cref{t1} which, in addition to obtaining the block tridiagonal forms (\Cref{t1,T1,t11}), guarantee that additionally some of the entries in the off-diagonal blocks can further be made 0, i.e., one can achieve further sparsificaton (\Cref{T2,t3aa}).

In \Cref{secdens} we make a rough comparison between the sparsity of the matrix forms discussed herein (their possible nonzero entries called the support entries) and that of other well-known canonical matrix forms for operators.
We define the basis dependent concept of support density (a measure of sparsification), and we calculate it for various forms.
In particular, we show that for our block tridiagonal forms used here and for the staircase forms developed by the last named author in \cite{Wei-1980-IEOT}, the support density is $2/3$, but that one of our modifications here of our original block tridiagonal forms reduce the support density to $1/2$ (see \Cref{sec5a,thm:arbitrary-staircase-density}).
In contrast, in the bases diagonalizing compact normal operators or having Anderson model forms, the support density is $0$.
And for triangular or Hessenberg matrices, the support density is $1/2$.
But of course not all operators have bases that admit these forms (which for Hessenberg matrices and selfadjoint operators, can depend on the existence of a cyclic vector (see paragraphs preceding \Cref{sec5a})).
But all operators do have universal block tridiagonal forms in some basis.
In fact, from any basis one can derive a new basis to achieve this.

\Cref{sec6} provides many necessary conditions for the Pearcy--Topping problem involving singular numbers (alias $s$-numbers) and algebraic $\BH$-ideal constraints.
 
% % % % % % % % % % % % % % % % % % % % % % % % % % % % % % % % % % % 
% % % % % % % % % % % % % % % % % % % % % % % % % % % % % % % % % % % 

\section{Anderson's Model (AM)}
\label{sec3}

This section presents our first steps in extending the contributions of \cite{And-1977-JRAM} and \cite{Wei-1980-IEOT} beyond the previous preliminary work (listed chronologically) of \cite{DMR,Pat-2012,BPW-2014-VLOT}.
We start with some beginning history, then proceed to solutions of some test problems that later arose from that history, namely, about which compact positive diagonal operators are commutators of compact operators.

Our main motivation for studying sparse matrices starting in \cite{PPW-2020-TmloVLOt} with a description in \Cref{sec2} herein, comes from the contributions of \cite{And-1977-JRAM} and \cite{Wei-1980-IEOT} to the 1971 questions of Pearcy and Topping \cite{PT-1971-MMJ}.
In particular:
\begin{enumerate*}
\item Is every compact operator a single commutator of compact operators; and 
\item Is every trace class, trace zero operator a commutator of Hilbert--Schmidt operators, or at least a finite sum of such commutators?
\end{enumerate*}
These turned out to be the most important among several results and seminal open questions about commutators and their linear spans listed in \cite{PT-1971-MMJ}.
For starters, they proposed a surprisingly simple test question which turned out to be groundbreaking: Is every rank one projection a single commutator of compact operators?
An affirmative answer was provided by Anderson in \cite{And-1977-JRAM} along with applications (\Cref{t3,t4,t5} herein).
Via weighted shifts, it is easy to obtain selfadjoint operators with initial sums of its eigenvalues converging to $0$ as commutators of compact operators (see \cite[Theorem~1]{FF-1980-PAMS}).
It turned out, for nonzero positive compact operators (hence initial sums of its eigenvalues do not converge to $0$), representing them as a commutator of compact operators was very hard.
This is one of the reasons why targeting positive compact operators became so important.

\begin{theorem}[Anderson \cite{And-1977-JRAM}, 1977]
  \label{t3}
  For $P$ a rank one selfadjoint projection, there exist compact operators $C$ and $Z$ for which $P=CZ-ZC$.
\end{theorem}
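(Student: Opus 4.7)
The plan is to construct $C$ and $Z$ explicitly with block tridiagonal structure and arithmetically growing block sizes, in a basis adapted to $P$. First choose an orthonormal basis $\seq{e_n}$ of $\Hil$ such that $Pe_1=e_1$ and $Pe_n=0$ for $n\ge2$. Partition the index set into consecutive intervals $I_n$ of length $n$ (so $I_1=\{1\}$, $I_2=\{2,3\}$, $I_3=\{4,5,6\}$, etc.), and let $V_n=\spans\{e_k:k\in I_n\}$, so that $\Hil=\bigoplus_{n\ge1}V_n$ with $\dim V_n=n$.

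Take $C$ block sub-diagonal with blocks $C_n\colon V_n\to V_{n+1}$ and $Z$ block super-diagonal with blocks $Z_n\colon V_{n+1}\to V_n$ on this decomposition. Then $CZ-ZC$ is automatically block-diagonal, and the identity $CZ-ZC=P$ reduces to the scalar condition $-Z_1C_1=1$ on $V_1$ together with the consistency relations
\begin{equation*}
Z_nC_n \;=\; C_{n-1}Z_{n-1}\qquad\text{on } V_n, \quad n\ge 2.
\end{equation*}
Cyclicity of the trace telescopes these into $\trace(Z_nC_n)=-1$ for every $n$, and hence $\|Z_n\|\,\|C_n\|\ge\|Z_nC_n\|\ge 1/n$: this is the \emph{trace obstruction}.

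The heart of the argument is to realize these consistency relations by explicit blocks with $\|C_n\|,\|Z_n\|\to 0$, the latter being exactly what makes $C,Z\in\KH$. A natural ansatz is to take $C_n$ with a single nonzero subdiagonal of weights and $Z_n$ with a single nonzero superdiagonal of weights, and solve for the weights recursively so that $Z_nC_n$ agrees with $C_{n-1}Z_{n-1}$ on $V_n$ while the extra coordinate in $V_n$ is annihilated. Scaling both families of weights at rate $1/\sqrt{n}$ saturates the trace obstruction up to constants and gives $\|C_n\|,\|Z_n\|=O(1/\sqrt{n})$; then $C$ and $Z$ are norm limits of their finite block truncations and hence compact.

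The main obstacle is this simultaneous matching-and-decay step: one must chain a growing-dimensional family of operator identities recursively while forcing both factor norms to zero against the trace lower bound $\|Z_n\|\|C_n\|\ge 1/n$. The arithmetic block growth is what makes this feasible — each step introduces exactly one new dimension in $V_n$, providing just enough room both to absorb the rank-$(n-1)$ image of $C_{n-1}Z_{n-1}$ and to spread the trace $-1$ across an $n$-dimensional block of operator norm $\sim 1/n$.
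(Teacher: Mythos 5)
Your reduction of $CZ-ZC=P$ to the block relations is correct, but the construction you propose can never be completed, because those relations propagate far more than the trace. Since $Z_nC_n=C_{n-1}Z_{n-1}$ for $n\ge 2$ and since $XY$ and $YX$ always have the same nonzero eigenvalues (with multiplicities), the single nonzero eigenvalue $-1$ of $Z_1C_1$ is inherited by every block: $Z_2C_2=C_1Z_1$ has the same nonzero spectrum as $Z_1C_1=-1$, and inductively $-1$ is an eigenvalue of $Z_nC_n$ for every $n$. Hence $\norm{Z_n}\,\norm{C_n}\ge\norm{Z_nC_n}\ge 1$ for all $n$ --- not the bound $1/n$ you extracted from the trace. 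This is incompatible with compactness: if $C$ is compact then $\norm{C_n}\to 0$ (unit vectors chosen in the mutually orthogonal spaces $V_n$ tend weakly to zero, so the off-diagonal compressions of a compact operator tend to zero in norm; cf.\ \Cref{thm:bandable}), so even with $Z$ merely bounded one would have $\norm{Z_nC_n}\to 0$, a contradiction. So no choice of weights --- $1/\sqrt{n}$ or otherwise --- can realize your ansatz; the genuine obstruction in your scheme is spectral, not the trace bound, and it cannot be ``saturated.'' In fact your argument, run to its correct conclusion, proves that a rank one projection is \emph{never} a commutator of a block sub-diagonal operator with a block super-diagonal one when either factor is compact.

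This is precisely why Anderson's construction (the model \eqref{eq6}--\eqref{eq5} recalled in \Cref{sec3}) gives \emph{each} of $C$ and $Z$ both a super-diagonal and a sub-diagonal family of blocks, rather than splitting the two roles between the two operators. There the diagonal block of $CZ-ZC$ is the four-term expression $B_nX_n-Y_nA_n+A_{n+1}Y_{n+1}-X_{n+1}B_{n+1}$ of \eqref{eqD_N}, and only the trace telescopes: $\trace(Q_nPQ_n)=\trace(A_nY_n-X_nB_n)=1$, as in \eqref{eq:telescoping-partial-trace}. A trace-one $n\times n$ matrix need only have norm of order $1/n$, and Anderson's weights of size $O(n^{-1/2})$ (the parameters $a_{k,n},b_{k,n},x_{k,n},y_{k,n}$ quoted in the proof of \Cref{t7}) meet this while all block norms tend to zero, so $C,Z\in\KH$. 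In your two-term scheme the diagonal block is (up to sign) a single product, and products rigidly remember the nonzero spectrum of their predecessors; the two extra terms in Anderson's model are exactly what destroys that rigidity and leaves only the surmountable trace constraint.
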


An application of this to a more general commutator result is:
\begin{theorem}[Anderson \cite{And-1977-JRAM}, 1977]
  \label{t4}
  If $T$ is a compact operator with $\ker T$ containing an infinite dimensional reducing subspace of $T$, then there are compact operators $C$ and $Z$ for which $T= CZ-ZC$.
\end{theorem}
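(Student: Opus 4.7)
The strategy is to exploit the infinite-dimensional reducing subspace $\mathcal{M}\subseteq\ker T$ as geometric ``slack'' and reduce to the rank-one projection case of \Cref{t3}. Relative to $\Hil=\mathcal{M}\oplus\mathcal{M}^\perp$, the operator has block form $T=0\oplus T_0$ with $T_0$ compact on $\mathcal{M}^\perp$. I would first treat the case $T_0\ge 0$, then self-adjoint $T_0$ by handling the positive and negative spectral parts on two independent slots of $\mathcal{M}$, and finally general compact $T_0$ via the polar decomposition $T_0=V|T_0|$.

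\textbf{The positive case.} Spectrally decompose $T_0=\sum_n\lambda_n\, e_n\otimes e_n$ with $\lambda_n\downarrow 0$ and $\{e_n\}$ orthonormal in $(\ker T_0)^\perp\subseteq\mathcal{M}^\perp$. Split $\mathcal{M}=\bigoplus_n\mathcal{M}_n$ into countably many mutually orthogonal, infinite-dimensional subspaces and set $\mathcal{H}_n:=\mathcal{M}_n\oplus\spans\{e_n\}$; orthogonality of $\{e_n\}$ together with that of $\{\mathcal{M}_n\}$ makes the $\mathcal{H}_n$ mutually orthogonal. On each infinite-dimensional $\mathcal{H}_n$, \Cref{t3} (applied via a unitary identification with a fixed reference rank-one projection) produces compact operators $C_n^0, Z_n^0$ of uniformly bounded norm with $[C_n^0, Z_n^0]=e_n\otimes e_n$. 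Rescaling $C_n:=\sqrt{\lambda_n}\,C_n^0$ and $Z_n:=\sqrt{\lambda_n}\,Z_n^0$ yields $[C_n, Z_n]=\lambda_n\,(e_n\otimes e_n)$ with $\|C_n\|,\|Z_n\|\to 0$. The orthogonal direct sums $C=\bigoplus_n C_n$ and $Z=\bigoplus_n Z_n$, extended by zero on the remaining orthogonal complement (including $\ker T_0\cap\mathcal{M}^\perp$ and any unused portion of $\mathcal{M}$), are then compact, and since the supports are pairwise orthogonal, $[C,Z]=\bigoplus_n[C_n, Z_n]=T$.

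\textbf{The general case and main obstacle.} For non-self-adjoint $T_0$, write $T_0=V|T_0|$. The positive case yields compact $C', Z'$ with $[C',Z']=0\oplus|T_0|$, and the remaining task is to absorb the partial isometry $V$ into a commutator identity for $T$. The delicate point, and I expect the main obstacle, is coordinating the choice of Anderson building blocks and the splitting $\mathcal{M}=\bigoplus_n\mathcal{M}_n$ with the polar decomposition so that $V$ (extended by the identity on $\mathcal{M}$) commutes with either $C'$ or $Z'$; once achieved, $V[C',Z']=[VC', Z']$ (or $=[C', VZ']$) is itself a commutator of compact operators realizing $T$, since $V$ is a contraction and $VC'$ (resp.\ $VZ'$) remains compact. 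An alternative, possibly cleaner, route works directly with the Schmidt decomposition $T_0=\sum_n\sigma_n\, f_n\otimes e_n$, realizing each scaled rank-one operator $\sigma_n(f_n\otimes e_n)$ inside an infinite-dimensional slot of $\mathcal{M}$ and summing; the cost is that $\{e_n\}$ and $\{f_n\}$ need not be jointly orthogonal, so one must enlarge each slot to accommodate both singular vectors and carefully orthogonalize the supports, so this bookkeeping rather than any deep analytic obstruction is the crux of extending \Cref{t3} to the setting of \Cref{t4}.
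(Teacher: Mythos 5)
Your reduction and your positive case are correct: writing $\Hil=\mathcal{M}\oplus\mathcal{M}^\perp$ with $T=0\oplus T_0$, splitting $\mathcal{M}$ into infinitely many infinite-dimensional slots, transporting one fixed Anderson pair from \Cref{t3} unitarily into each space $\mathcal{H}_n=\mathcal{M}_n\oplus\spans\{e_n\}$, scaling by $\sqrt{\lambda_n}$, and summing over mutually orthogonal supports does give compact $C,Z$ with $\comm{C}{Z}=T$ when $T_0\ge 0$, and the same device with disjoint families of slots for the positive and negative spectral parts handles selfadjoint $T_0$. (For comparison purposes: the paper contains no proof of \Cref{t4}; it is quoted from Anderson, so your argument must stand on its own.)

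It does not yet prove the theorem, because the statement concerns \emph{every} compact $T$, and the passage from the selfadjoint case to the general case is precisely where your proposal stops; neither of your two suggested routes closes, and the obstruction is more than bookkeeping. For the polar-decomposition route you need $\tilde V:=I_{\mathcal{M}}\oplus V$ to commute with $Z'$, and nothing in the slot construction provides this: $Z'$ leaves each $\mathcal{H}_n$ invariant while $\tilde V$ sends $e_n$ to $f_n$, which in general lies outside $\mathcal{H}_n$; already for $x\in\mathcal{M}_n$ with $Z'x=m+\alpha e_n$ ($m\in\mathcal{M}_n$) the identity $\tilde VZ'x=Z'\tilde Vx$ forces $\alpha(f_n-e_n)=0$, so commutation fails unless $V$ is essentially diagonal in $\seq{e_n}$, i.e.\ unless $T_0$ is essentially the normal case you have already handled. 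For the Schmidt route, the subspaces $\spans\{e_n,f_n\}$ for different $n$ need not be mutually orthogonal; once the supports of the pairs $(C_n,Z_n)$ overlap, the cross terms $C_nZ_m-Z_mC_n$ for $n\ne m$ no longer vanish, so $\sum_n\comm{C_n}{Z_n}$ is not $\comm{\sum_n C_n}{\sum_n Z_n}$, and enlarging each slot to contain both $e_n$ and $f_n$ does not remove the overlaps \emph{between different indices}. So the non-selfadjoint case --- which is the actual content of \Cref{t4} beyond what you prove --- still needs a genuine idea; Anderson does not obtain it by reducing to the positive case, but by a direct block-matrix construction that uses the infinite-dimensional kernel summand to spread a general compact $T_0$, in the spirit of the models discussed in \Cref{sec3,sec2}.
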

Regarding the Pearcy--Topping commutator problem for general compact operators, a weaker but completely general result is:
\begin{theorem}[Anderson \cite{And-1977-JRAM}, 1977]
  \label{t5}
  If $T$ is a compact operator, then there is a compact operator $C$ and a bounded operator $Z$ for which  $T= CZ-ZC$.
\end{theorem}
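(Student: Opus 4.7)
The strategy is to deduce \Cref{t5} from \Cref{t4} by embedding $T$ into an operator on a larger Hilbert space that does possess an infinite-dimensional reducing kernel, accepting a weakening of the conclusion (compact+bounded rather than compact+compact) in order to handle the reduction.

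First, form the compact operator $\widehat{T} := T \oplus 0$ on $\widehat{\Hil} := \Hil \oplus \Hil$. Its kernel contains the infinite-dimensional reducing subspace $\{0\} \oplus \Hil$. By \Cref{t4} there exist compact operators $\widehat{C}, \widehat{Z} \in \KH$ (viewed on $\widehat{\Hil}$) with $\widehat{T} = \widehat{C}\widehat{Z} - \widehat{Z}\widehat{C}$.

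Next, transfer this commutator identity back to one on $\Hil$ itself via the isometry $V \colon \Hil \to \widehat{\Hil}$, $Vh = (h, 0)$, which satisfies $V^{*}\widehat{T}V = T$. Inserting $I = VV^{*} + (I - VV^{*})$ into each product yields the identity
\begin{equation*}
  T \;=\; [V^{*}\widehat{C}V,\, V^{*}\widehat{Z}V] \;+\; V^{*}\widehat{C}(I - VV^{*})\widehat{Z}V \;-\; V^{*}\widehat{Z}(I - VV^{*})\widehat{C}V,
\end{equation*}
expressing $T$ as a commutator of two compact operators on $\Hil$ plus an error term $E$ built from the infinite-rank bounded projection $I - VV^{*}$. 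I would then absorb $E$ by replacing $V^{*}\widehat{Z}V$ with $V^{*}\widehat{Z}V + \Delta$, choosing a bounded $\Delta$ that solves the derivation equation $[V^{*}\widehat{C}V,\, \Delta] = -E$, so that the final pair $C := V^{*}\widehat{C}V$ (compact) and $Z := V^{*}\widehat{Z}V + \Delta$ (bounded) realizes $T = CZ - ZC$.

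The main obstacle is the existence and boundedness of $\Delta$: the equation $[C, \Delta] = -E$ need not be solvable in bounded $\Delta$ for arbitrary compact $C$ and $E$. Overcoming this requires exploiting the concrete block structure of $\widehat{C}, \widehat{Z}$ produced by Anderson's constructive proof of \Cref{t4}, which is built from weighted shifts on a matched block tridiagonal decomposition; for this specific structure the error term $E$ takes a form amenable to absorption by a partial isometry interchanging the two summands of $\widehat{\Hil}$. That partial isometry is bounded but infinite rank, which is precisely why $Z$ in the conclusion is only bounded and not compact, and explains why this approach does not settle the Pearcy--Topping commutator problem in its full strength.
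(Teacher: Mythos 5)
The paper does not actually prove \Cref{t5}: it is quoted from Anderson \cite{And-1977-JRAM}, and the introduction indicates Anderson obtained it from his rank-one projection theorem (\Cref{t3}), not by the reduction you attempt. So your proposal has to stand on its own, and as written it has a genuine gap at its decisive step.

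The first half is fine: $\widehat{T}=T\oplus 0$ does satisfy the hypothesis of \Cref{t4}, and the identity $T=\comm{V^{*}\widehat{C}V}{V^{*}\widehat{Z}V}+E$ with $E=V^{*}\widehat{C}(I-VV^{*})\widehat{Z}V-V^{*}\widehat{Z}(I-VV^{*})\widehat{C}V$ is algebraically correct. But the entire weight of the proof then falls on producing a bounded $\Delta$ on $\Hil$ with $\comm{V^{*}\widehat{C}V}{\Delta}=-E$, and you give no argument that this derivation equation is solvable. For a fixed compact $C$, the range of the inner derivation $\Delta\mapsto\comm{C}{\Delta}$ on $\BH$ is a thin, highly constrained set of compact operators, and nothing in your construction places $E$ in it: $E$ is simply the defect created by compressing a commutator through the isometry $V$, and compressions of commutators are not commutators. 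Indeed, if the absorption step could be carried out with any generality, it would by itself prove the theorem and \Cref{t4} would be superfluous --- so the step you defer is exactly where the whole difficulty lives. Moreover, Anderson's construction behind \Cref{t4} uses the infinite-dimensional kernel summand (here $\{0\}\oplus\Hil$) as the room in which the shift-like blocks of $\widehat{C},\widehat{Z}$ operate, so compressing to $\Hil\oplus\{0\}$ destroys precisely the structure that made $\widehat{T}$ a commutator; there is no reason the resulting $E$ is ``amenable to absorption.'' Finally, the proposed fix via ``a partial isometry interchanging the two summands of $\widehat{\Hil}$'' is not even a candidate for $\Delta$, since $\Delta$ must act on $\Hil$; if instead you modify $\widehat{Z}$ upstairs before compressing, you are again compressing a commutator and face the same obstruction. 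In short, the passage from $T\oplus 0$ to $T$ itself is the unproved (and essential) content.
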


\subsection*{Anderson's Model (AM)}

Historically, many results about commutators employed matrices that were weighted shifts or their adjoints, possibly with block weights (replacing scalar weights).
Anderson used the following block matrix forms, with central block sizes growing arithmetically:

\begin{equation}
  \label{eq6}
  C =
  \begin{pmatrix}
    0 &  A_1  &0  &\dots          \\
    B_1 & 0 & A_2   &\ddots           \\
    0 &B_2 & 0 &\ddots            \\
    \vdots &\ddots &\ddots  &\ddots
  \end{pmatrix},\quad\quad
  Z =
  \begin{pmatrix}
    0 &  X_1  &0  &\dots          \\
    Y_1 & 0 & X_2   &\ddots           \\
    0 & Y_2 & 0 &\ddots            \\
    \vdots &\ddots &\ddots  &\ddots
  \end{pmatrix}.
\end{equation}
% \phantom{lala}\break
The sizes of these diagonal square blocks grow arithmetically: $1\times 1, 2\times 2, 3\times 3, \dots$.
And the  nonzero rectangular blocks grow arithmetically and are all shift-like, i.e., with nonzero entries only on the certain diagonals:
\begin{equation}
  \label{eq5}
  \begin{gathered}
    A_n=
    \begin{pmatrix}
      a_{1,n} & 0 & \cdots   & 0 & 0 \\
      0 & a_{2,n}  & \ddots  & \vdots & \vdots \\
      \vdots &\ddots & \ddots & 0 & \vdots \\
      0 & \cdots & 0 & a_{n,n} & 0
    \end{pmatrix},\quad
    X_n=
    \begin{pmatrix}
      0 & x_{1,n} & 0  & \cdots & 0 \\
      \vdots & 0 & x_{2,n} & \ddots & \vdots \\
      \vdots & \vdots & \ddots & \ddots & 0 \\
      0 & 0 & \cdots & 0 & x_{n,n}
    \end{pmatrix},\\
    \\
    B_n=
    \begin{pmatrix}
      0 & \cdots &\cdots & 0 \\
      -b_{1,n} & 0 & \cdots & 0 \\
      0 &-b_{2,n}  & \ddots & \vdots \\
      \vdots & \ddots & \ddots & 0 \\
      0 & \cdots & 0 & -b_{n,n}
    \end{pmatrix},\quad
    Y_n=
    \begin{pmatrix}
      y_{1,n} & 0  & \cdots  & 0 \\
      0 & y_{2,n} & \ddots & \vdots \\
      \vdots & \ddots & \ddots & 0 \\
      0 & \cdots & 0 & y_{n,n}\\
      0 & \cdots & \cdots & 0
    \end{pmatrix}.
  \end{gathered}
\end{equation}
Throughout the paper we will refer to operators $C$ and $Z$ described by \eqref{eq6}--\eqref{eq5} as the (classical) Anderson model (AM).

The specific matrix structure of the operators $C$ and $Z$ in \eqref{eq6} allowed Anderson to reduce the operator equation $CZ-ZC=D$, for $D$ block diagonal (in Anderson's case $D = P$ the rank one projection) to a system of block matrix equations.
These equations can be split into two groups: those that ensure that $CZ-ZC$ is block diagonal, and those that force those diagonal blocks to match the diagonal blocks $\seq{D_n}$ of $D$.
These sets of equations are
\begin{equation}
  \label{eqD_N}
  \begin{gathered}
    A_nX_{n+1}=X_nA_{n+1}\quad\mbox{ and }\quad B_{n+1}Y_n=Y_{n+1}B_n,\\
    D_1=A_{1}Y_{1}-X_{1}B_{1}, \quad
    D_{n+1}=B_nX_n-Y_nA_n+A_{n+1}Y_{n+1}-X_{n+1}B_{n+1},\quad n=1,2,\dots.
  \end{gathered}
\end{equation}
\begin{remark}
  To solve for $D = CZ-ZC$ where $D$ is block diagonal with central block sizes $1\times 1, 2\times 2, 3\times 3, \dots$ and with $C,Z$ having matrix forms \eqref{eq6}, it is necessary and sufficient to simply solve  the system \eqref{eqD_N} for these finite blocks, so that the $C,Z$ finite block norms all tend to 0.
  (The latter condition ensures $C,Z$ are compact, but moreover it is also a necessary condition by \Cref{thm:bandable}.)
\end{remark}

\begin{remark}
  \label{r2.5}
  Displays \eqref{eq6}--\eqref{eq5} being what we mean by Anderson's model, in \Cref{sec3a} we generalize (AM) to an exponential Anderson's model (EAM) by choosing instead exponentially growing block sizes.
  And later we employ our universal block tridiagonal forms in \Cref{sec2} which generalize both by showing every operator in some basis is block tridiagonal with universal block sizes growing exponentially.
\end{remark}

Based on Anderson's findings above, the last named author asked in 1976 whether any strictly positive compact operator could be a commutator of compact operators (see \cite[p.\,2, Acknowledgement and prior paragraph]{BPW-2014-VLOT}).
This, based on staircase matrix form calculations from \cite[Theorem 2 and lead up]{Wei-1980-IEOT}, in the hope of obtaining a class of counterexamples.
And also because if the kernel of $D$ has infinite dimension, then $D$ is a commutator of compact operators by Anderson's \Cref{t4} above.
But the finite dimensional kernel case for nonnegative $D$ remains unknown, in particular the zero dimensional kernel case, that is, the strictly positive case stated above.
Of course, a positive solution to the strictly positive case trivially yields a positive solution to the general positive case.

It is natural to ask whether the construction that Anderson has employed can be modified to answer this commutator question or, more generally, for all compact normal operators $D$ (i.e., diagonal compact operators).
One such advance came in \cite{BPW-2014-VLOT}.

\begin{theorem}[\protect{\cite[Theorem 3.1]{BPW-2014-VLOT}}]
  \label{t6}
  Positive compact operators are commutators of compact operators when they have eigenvalue sequences:
  \begin{equation*}
    \seq*{d_1, \dfrac{d_2-d_1}{2}, \dfrac{d_2-d_1}{2},\dfrac{d_3-d_2}{3},\dfrac{d_3-d_2}{3},\dfrac{d_3-d_2}{3},\dots},
  \end{equation*}
  where $\seq{d_n}$ is a nondecreasing sequence of nonnegative numbers but $d_n/n \to 0$.
\end{theorem}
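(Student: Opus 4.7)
The plan is to apply Anderson's model (AM) with central block sizes $1, 2, 3, \ldots$ matching the repetition pattern of the eigenvalue list, arranging $D$ in its diagonalizing basis so that the $n$-th central block is $D_n = e_n I_n$, where $e_n := (d_n - d_{n-1})/n$ and $d_0 := 0$. (Such an ordering is allowed since $D$ is determined up to unitary equivalence by the multiset of its eigenvalues.) Then $[C, Z] = D$ reduces via \eqref{eqD_N} to the scalar recurrence, obtained by setting $p_{j, n} := a_{j, n} y_{j, n}$ and $q_{j, n} := b_{j, n} x_{j, n}$,
\[
  p_{j, n} + q_{j, n} - p_{j, n-1} - q_{j-1, n-1} = e_n \quad (j = 1, \ldots, n),
\]
(with boundary conventions $p_{n, n-1} = 0$ and $q_{0, n-1} = 0$), together with the cross-equations $A_n X_{n+1} = X_n A_{n+1}$ and $B_{n+1} Y_n = Y_{n+1} B_n$.

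The key structural move is to seek a solution where both $p_{j, n} + q_{j-1, n}$ and $p_{j, n} + q_{j, n}$ are constant in $j$, with respective common values $P_n$ and $C_n$. The first collapses the recurrence to $C_n = e_n + P_{n-1}$; together they force $q_{j, n}$ to be linear in $j$, and matching the boundary values $q_{0, n} = 0$ and $q_{n, n} = P_n$ produces the single-parameter recursion $(n+2) P_{n+1} = (n+1)(e_{n+1} + P_n)$. In the rescaled variable $R_n := (n+1) P_n$ this telescopes to $R_{n+1} - R_n = d_{n+1} - d_n$, yielding $R_n = d_n$ and the explicit closed forms
\[
  p_{j, n} = \frac{d_n(n + 1 - j)}{n(n + 1)}, \qquad q_{j, n} = \frac{d_n \, j}{n(n + 1)}.
\]
Taking the symmetric factorizations $a_{j, n} = y_{j, n} = \sqrt{p_{j, n}}$ and $b_{j, n} = x_{j, n} = \sqrt{q_{j, n}}$, the cross-equations reduce entry-wise to the identity $p_{j, n} q_{j, n+1} = q_{j, n} p_{j+1, n+1}$, and one checks directly that both sides evaluate to $d_n d_{n+1} j(n + 1 - j)/[n(n + 1)^2 (n + 2)]$.

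Compactness of $C$ and $Z$ then follows from the uniform estimate $\max_j |a_{j, n}|, \max_j |b_{j, n}|, \max_j |x_{j, n}|, \max_j |y_{j, n}| \le \sqrt{d_n/n}$, which tends to $0$ by the hypothesis $d_n/n \to 0$; each off-diagonal block has operator norm equal to its maximum single-diagonal entry, so norm approximation of the block tridiagonal forms by their finite truncations yields compactness. The main obstacle lies in locating the ansatz: any attempt to make $p_{j, n}$ or $q_{j, n}$ individually constant in $j$ immediately fails at the corner positions $j = 1$ and $j = n$, and the realization that imposing constancy of $p_{j, n} + q_{j-1, n}$ instead decouples the two-index recurrence into a single scalar recursion whose rescaling $(n + 1) P_n$ inherits the first-difference structure of $d_n$ is the crux of the construction; once found, every remaining step is a direct substitution.
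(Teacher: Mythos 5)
Your proof is correct and is essentially the construction the paper attributes to \cite{BPW-2014-VLOT}: your closed forms $p_{j,n}=d_n(n+1-j)/(n(n+1))$ and $q_{j,n}=d_nj/(n(n+1))$ are exactly the products $a_{j,n}y_{j,n}$ and $b_{j,n}x_{j,n}$ of Anderson's matrices \eqref{eq5} scaled by $\sqrt{d_n}$ (your symmetric factorization merely redistributes a factor $\sqrt{n/(n+1)}$ between paired entries, leaving the block equations \eqref{eqD_N} and the compactness bound $\sqrt{d_n/n}$ unchanged). The only added value is that you derive the ansatz systematically rather than verifying a given scaling, but the route is the same as the cited proof (and as the $\varepsilon_k\equiv 0$ case of the computation in \Cref{t7}).
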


This was earlier proved by Davidson, Marcoux, and Radjavi \cite{DMR}, but the result was never published nor disseminated.
But at the time in a private communication to the last named author of this paper they asked if knowing this result would lead to achieving all strictly positive diagonal operators being so representable as commutators of compact operators.
We see below in \Cref{sec3a} (\Cref{tEAM} and \Cref{NOTtEAM}) that not all strictly positive compact operators can be representable this way using Anderson's model, nor by using an expanded version, the exponential Anderson's model (EAM).

An equivalent formulation of \Cref{t6} appears in \cite{Pat-2012}.

\begin{theorem}[\protect{\cite[Theorem 3.2.1]{Pat-2012}}]
  \label{thm3.4'}
  Positive compact operators are commutators of compact operators when
  they have eigenvalue sequences:
  \begin{equation*}
    \seq*{d_1, \dfrac{d_2}{2}, \dfrac{d_2}{2},\dfrac{d_3}{3},\dfrac{d_3}{3},\dfrac{d_3}{3},\dots},
  \end{equation*}
  where $\frac1{n}\sum_{j=1}^n d_j \to 0$.
\end{theorem}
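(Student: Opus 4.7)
The plan is to deduce \Cref{thm3.4'} from \Cref{t6} by a telescoping substitution, recognizing the two theorems as reparameterizations of the same family of operators. First I would, given a sequence $\seq{d_n}$ of nonnegative numbers satisfying $\frac{1}{n}\sum_{j=1}^n d_j \to 0$ as hypothesized in \Cref{thm3.4'}, define the cumulative sums $e_n := \sum_{j=1}^n d_j$. Since each $d_j \geq 0$ (positivity of the operator forces the eigenvalues to be nonnegative), the sequence $\seq{e_n}$ is automatically nondecreasing and nonnegative, and the arithmetic-mean hypothesis $\frac{1}{n}\sum_{j=1}^n d_j \to 0$ translates verbatim into $e_n/n \to 0$, matching the condition imposed on the auxiliary sequence in \Cref{t6}.

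Next, using $d_n = e_n - e_{n-1}$ (with the convention $e_0 := 0$), the eigenvalue sequence of \Cref{thm3.4'}, namely
\begin{equation*}
\seq*{d_1, \tfrac{d_2}{2}, \tfrac{d_2}{2}, \tfrac{d_3}{3}, \tfrac{d_3}{3}, \tfrac{d_3}{3}, \ldots},
\end{equation*}
rewrites as
\begin{equation*}
\seq*{e_1, \tfrac{e_2-e_1}{2}, \tfrac{e_2-e_1}{2}, \tfrac{e_3-e_2}{3}, \tfrac{e_3-e_2}{3}, \tfrac{e_3-e_2}{3}, \ldots},
\end{equation*}
which is precisely the eigenvalue sequence featured in \Cref{t6} applied to $\seq{e_n}$. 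Invoking \Cref{t6} then yields compact operators whose commutator is unitarily equivalent to the given diagonal operator, completing the argument.

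The correspondence $\seq{d_n} \mapsto \seq{e_n}$ is in fact a bijection between the hypothesis classes of the two theorems. In the reverse direction, if $\seq{e_n}$ is nondecreasing and nonnegative with $e_n/n \to 0$, then setting $d_n := e_n - e_{n-1}$ yields a nonnegative sequence satisfying $\frac{1}{n}\sum_{j=1}^n d_j = e_n/n \to 0$ by telescoping. Hence \Cref{thm3.4'} is logically equivalent to \Cref{t6}, and no new construction beyond the Anderson-type model used to establish \Cref{t6} is required. I do not anticipate any genuine obstacle here: the substantive content lies entirely in \Cref{t6}, and the only task is to record the change of variables cleanly and to verify that each hypothesis on $\seq{d_n}$ transforms as stated.
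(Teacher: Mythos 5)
Your proposal is correct and matches the paper's treatment: the paper presents \Cref{thm3.4'} precisely as an equivalent reformulation of \Cref{t6}, and the passage between the two is exactly your partial-sums change of variables $e_n=\sum_{j=1}^n d_j$ (with positivity of the operator forcing $d_n\ge 0$, hence $\seq{e_n}$ nondecreasing and $e_n/n\to 0$). No further construction is needed beyond what establishes \Cref{t6}.
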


The proof of \Cref{t6} is constructive, with a clever alteration of the Anderson's matrices \eqref{eq5} by inserting a factor $\sqrt{d_n}$ in front of $A_n$, $X_n$, $B_n$ and $Y_n$.

\Cref{t4} had a severe restriction that the target operator must have an infinite dimensional reducing subspace on which it is 0, while \Cref{t6} (and \Cref{thm3.4'}) requires that the eigenvalues repeat.
Therefore, it is of interest to ask whether for all positive or all normal compact operators one can move beyond these two patterns of diagonal sequences.
In particular, is it possible to obtain a diagonal operator with the diagonal entries distinct positive numbers?
Our main result in this section (\Cref{t7}) is that a modification of Anderson's model yields an affirmative answer to this question.

The proof of this theorem relies on the following two lemmas.
Throughout this paper we use the term positive number to mean strictly positive.
\begin{lemma}
  Let $0<L<1$.
  Then there exists a sequence $\seq{\alpha_n}$ of positive numbers such that
  \begin{equation}
    \label{eq32}
    \alpha_{n+1}\ge \left(1+\dfrac{L}{n}\right)\alpha_n,\quad\mbox{ and }\quad \dfrac{\alpha_n}{n}\to 0.
  \end{equation}
\end{lemma}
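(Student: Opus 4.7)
The plan is to take equality in the recurrence and then control the resulting product against a power of $n$. Specifically, I would set $\alpha_1 = 1$ and define $\alpha_{n+1} = (1+L/n)\alpha_n$ for $n\ge 1$, so the first condition in \eqref{eq32} holds automatically with equality, and all $\alpha_n$ are positive. This reduces the problem to verifying the decay condition $\alpha_n/n \to 0$ for the closed form
\begin{equation*}
  \alpha_n = \prod_{k=1}^{n-1}\left(1+\frac{L}{k}\right), \qquad n\ge 2.
\end{equation*}

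Next I would take logarithms and compare with the harmonic sum. Using $\log(1+x)\le x$ for $x>-1$,
\begin{equation*}
  \log \alpha_n = \sum_{k=1}^{n-1}\log\left(1+\frac{L}{k}\right) \le L\sum_{k=1}^{n-1}\frac{1}{k} \le L(1+\log n),
\end{equation*}
which yields $\alpha_n \le e^L\, n^L$. Since $L<1$, this gives $\alpha_n/n \le e^L n^{L-1} \to 0$, finishing the proof. (Alternatively, one can recognize the product as a ratio of Gamma functions, $\alpha_n = \Gamma(n+L)/(\Gamma(n)\Gamma(1+L))$, and invoke the standard asymptotic $\Gamma(n+L)/\Gamma(n)\sim n^L$; but the elementary inequality above is cleaner and avoids machinery.)

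There is no real obstacle here: the main point is simply to notice that iterating the recurrence produces a product that grows only like $n^L$, which beats linear growth precisely because $L<1$. The only thing to watch is that the inequality $\log(1+x)\le x$ is applied with $x=L/k>0$, which is valid, and that the harmonic sum bound $\sum_{k=1}^{n-1}1/k\le 1+\log n$ holds for all $n\ge 2$. Any other choice of $\alpha_1>0$ rescales the sequence by a positive constant and preserves both conclusions, so the particular normalization is immaterial.
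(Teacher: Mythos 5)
Your proof is correct and follows essentially the same strategy as the paper's: iterate the recurrence to produce a product of the form $\prod(1+c/k)$, then use the standard asymptotic $\prod_{k=1}^{n-1}(1+c/k)\sim n^c$ (via logarithms and the harmonic sum) to conclude $\alpha_n/n\to 0$ when $c<1$. The only cosmetic difference is that the paper introduces an auxiliary $M$ with $L<M<1$ and allows $\alpha_{n+1}$ anywhere in the interval $[(1+L/n)\alpha_n,\,(1+M/n)\alpha_n]$, whereas you simply take equality at the lower endpoint; your variant is a valid and slightly cleaner special case.
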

\begin{proof}
  Let $L<M<1$.
  We define the sequence $\seq{\alpha_n}$ inductively.
  Let $\alpha_1>0$.
  If $\alpha_1,\dots,\alpha_n$ have been selected, we choose $\alpha_{n+1}$ so that
  \begin{equation*}
    \left(1+\dfrac{L}{n}\right)\alpha_n \le \alpha_{n+1}\le \left(1+\dfrac{M}{n}\right)\alpha_n.
  \end{equation*}
  Then $\seq{\alpha_n}$ satisfies the first condition in \eqref{eq32} and
  \begin{equation*}
    \alpha_{n+1}\le \prod_{k=1}^n\left(1+\dfrac{M}{k}\right)\alpha_1.
  \end{equation*}
  But it is well known that the product above is asymptotically equivalent to $n^M$, (i.e., $\prod_{k=1}^n\left(1+{M}/{k}\right)/n^M$ is bounded above and below via taking logarithms) so the second condition in \eqref{eq32} holds, and the lemma is proved.
\end{proof}

\begin{lemma}
  \label{lemma2.7new}
  Let $\seq{\varepsilon_n}_{n=1}^\infty$ be any bounded sequence of strictly positive numbers with $L:=\sup \varepsilon_n<1$, let $\varepsilon_0=0$, and let $\seq{\alpha_n}_{n=1}^\infty$ be any sequence of positive numbers satisfying \eqref{eq32}.
  Then there exists a sequence of strictly positive numbers $\seq{d_n}_{n=1}^\infty$ with the following properties:
  \begin{enumerate}
  \item
    \begin{equation}
      \label{eq89}
      0<d_1\le\alpha_1.
    \end{equation}
  \item For all $n\ge 1$ and $1\le k\le n+1$
    \begin{equation}
      \label{eq22}
      d_n \dfrac{1+\varepsilon_{k-1}/n}{1+\varepsilon_{k}/(n+2)}<d_{n+1}\le \alpha_{n+1}.
    \end{equation}
  \end{enumerate}
\end{lemma}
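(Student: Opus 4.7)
The plan is to construct $\seq{d_n}$ inductively, where the crux is a uniform upper bound on the multiplicative factor
\begin{equation*}
M_n := \max_{1 \le k \le n+1} \frac{1 + \varepsilon_{k-1}/n}{1 + \varepsilon_{k}/(n+2)}
\end{equation*}
that allows the growth condition \eqref{eq32} on $\seq{\alpha_n}$ to dominate it. Since $\varepsilon_{k-1} \le L$ for every $k \ge 1$ (noting that $\varepsilon_0 = 0 < L$) and $\varepsilon_{k}/(n+2) \ge 0$, the numerator is at most $1 + L/n$ and the denominator is at least $1$. Hence $M_n \le 1 + L/n$ for every $n \ge 1$.

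Next I would set $d_1 := \alpha_1/2$, so $0 < d_1 < \alpha_1$, verifying (i). For the inductive step, I would maintain the slightly stronger hypothesis $0 < d_n < \alpha_n$. Given such a $d_n$, the choice of $d_{n+1}$ must land in the interval $(d_n M_n,\, \alpha_{n+1}]$, and I would show this interval is nonempty by the chain
\begin{equation*}
d_n M_n < \alpha_n M_n \le \alpha_n\bigl(1 + L/n\bigr) \le \alpha_{n+1},
\end{equation*}
where the first (strict) inequality uses $d_n < \alpha_n$, the second uses the bound on $M_n$, and the last is exactly \eqref{eq32}. I would then set $d_{n+1}$ to be any point strictly between $d_n M_n$ and $\alpha_{n+1}$ (for definiteness, the midpoint), which both verifies \eqref{eq22} for every admissible $k$ and preserves the strict induction hypothesis $d_{n+1} < \alpha_{n+1}$ so the recursion continues.

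There is essentially no obstacle: the lemma's hypotheses are tailored so that the growth rate $1 + L/n$ permitted for $\seq{\alpha_n}$ by \eqref{eq32} matches the worst-case growth rate $M_n \le 1 + L/n$ demanded by \eqref{eq22}. The only subtlety is propagating strict inequality $d_n < \alpha_n$ through the induction, which is handled by choosing $d_{n+1}$ strictly inside the open interval $(d_n M_n, \alpha_{n+1})$ at each step rather than at the endpoint.
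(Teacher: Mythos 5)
Your proposal is correct and follows essentially the same inductive argument as the paper: at each step the chain $d_n \cdot \frac{1+\varepsilon_{k-1}/n}{1+\varepsilon_k/(n+2)} < \alpha_n(1+L/n) \le \alpha_{n+1}$ (via \eqref{eq32}) shows the admissible interval for $d_{n+1}$ is nonempty. The only cosmetic difference is that you source the strict inequality from maintaining $d_n < \alpha_n$, while the paper gets it from $\varepsilon_k > 0$ making the denominator exceed $1$ and thus permits $d_{n+1} = \alpha_{n+1}$.
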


\begin{proof}
  We will construct the sequence $\seq{d_n}$ inductively.
  Let $d_1$ be any real number satsifying condition \eqref{eq89}.
  Suppose that the numbers $d_1,d_2,\dots,d_n$ have been selected so that they satisfy conditions of the form \eqref{eq89}--\eqref{eq22}.
  Notice that, in view of $0<\varepsilon_n\le L$ as well as the induction hypothesis, we have for $n\ge 1$ that
  \begin{equation*}
    d_n \dfrac{1+\varepsilon_{k-1}/n}{1+\varepsilon_{k}/(n+2)}<d_n\left(
      1+\dfrac{L}{n}\right)\le \alpha_n\left(
      1+\dfrac{L}{n}\right)\le \alpha_{n+1}.
  \end{equation*}
  Thus, $d_{n+1}$ can be chosen anywhere in the interval
  \begin{equation}
    \label{eq79}
    \left(d_n \dfrac{1+\varepsilon_{k-1}/n}{1+\varepsilon_{k}/(n+2)}, \alpha_{n+1}\right]. \qedhere
  \end{equation}
\end{proof}

Now we can prove the promised theorem.
\begin{theorem}
  \label{t7}
  Let $\seq{\varepsilon_n}_{n=0}^\infty$, their supremum
  $L$, and $\seq{\alpha_n}_{n=1}^\infty$ be as in \Cref{lemma2.7new}, and let $\seq{d_n}_{n=1}^\infty$ be a sequence provided by this lemma.

  Set $d_{1,1}=d_1 + \varepsilon_1/2$ and for all $n\ge 1$ and $1\le k\le n+1$ and set
  \begin{equation}
    \label{eq12}
    d_{k,n+1}=\frac{d_{n+1}-d_n}{n+1}+\frac{\varepsilon_k d_{n+1}}{(n+1)(n+2)}-\frac{\varepsilon_{k-1}d_n}{n(n+1)}.
  \end{equation}
  Then the diagonal operator $D = \bigoplus D_n$, where $D_n = \diag \seq{d_{k,n}}_{k=1}^n$, is strictly positive and is a commutator of compact operators.
  Moreover, for many such $D$, all the diagonal entries $\seq{d_{k,n}}$ are distinct.
\end{theorem}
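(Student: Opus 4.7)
The plan is to construct $C$ and $Z$ explicitly in Anderson's model form \eqref{eq6}--\eqref{eq5} so that $[C,Z]=D$, and then to check compactness, positivity, and distinctness. I would make the symmetric ansatz $a_{i,n}=y_{i,n}=\sqrt{c_{i,n}}$ and $x_{i,n}=b_{i,n}=\sqrt{e_{i,n}}$ for nonnegative scalars $c_{i,n},e_{i,n}$ to be determined. Under this ansatz the two families of off-diagonal commutator constraints in \eqref{eqD_N} collapse to the single family $c_{i,n}e_{i,n+1}=e_{i,n}c_{i+1,n+1}$ for $1\le i\le n$, while the $(k,k)$-entry of the $(n+1)$-st diagonal block of $[C,Z]$ becomes $c_{k,n+1}+e_{k,n+1}-c_{k,n}-e_{k-1,n}$, under the boundary conventions $c_{n+1,n}=0=e_{0,n}$.

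The key step is to rewrite \eqref{eq12} in the telescoping form
\begin{equation*}
d_{k,n+1}=\frac{d_{n+1}(n+2+\varepsilon_k)}{(n+1)(n+2)}-\frac{d_n(n+\varepsilon_{k-1})}{n(n+1)},
\end{equation*}
which essentially dictates the ansatz
\begin{equation*}
c_{k,n}=\frac{d_n(n+1-k)}{n(n+1)},\qquad e_{k,n}=\frac{d_n(k+\varepsilon_k)}{n(n+1)},\qquad 1\le k\le n.
\end{equation*}
Both are strictly positive since $\varepsilon_k>0$, $d_n>0$, and $1\le k\le n$. The formula automatically evaluates to $c_{n+1,n}=0$, and together with $\varepsilon_0=0$ and $e_{0,n}=0$ this unifies the three cases $k=1$, $2\le k\le n$, $k=n+1$ of the diagonal identity into a single line of algebra. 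The off-diagonal compatibility is then automatic, because both $c_{i,n}e_{i,n+1}$ and $e_{i,n}c_{i+1,n+1}$ equal the symmetric product $\frac{d_nd_{n+1}(n+1-i)(i+\varepsilon_i)}{n(n+1)^2(n+2)}$.

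It remains to verify positivity, compactness, and distinctness. Positivity amounts to $d_{k,n+1}>0$, and the telescoping rewriting shows this is equivalent to $d_{n+1}(1+\varepsilon_k/(n+2))>d_n(1+\varepsilon_{k-1}/n)$, which is precisely the first inequality in \eqref{eq22} already supplied by \Cref{lemma2.7new}. Compactness of $C,Z$ follows from the block norm estimates $\|A_n\|^2=\max_k c_{k,n}=d_n/(n+1)$ and $\|X_n\|^2=\max_k e_{k,n}<d_n/n$, together with $d_n\le\alpha_n$ and $\alpha_n/n\to 0$ from \eqref{eq32}. For the final clause, each coincidence $d_{k,n}=d_{k',n'}$ with $(k,n)\ne(k',n')$ imposes a single nontrivial algebraic constraint on finitely many of the $d_m$'s and $\varepsilon_j$'s; \Cref{lemma2.7new} leaves a nonempty open interval of freedom for each $d_{n+1}$ at every inductive step, and the $\varepsilon_k$ may likewise be chosen freely in $(0,L)$, so a standard countable exhaustion argument produces uncountably many parameter choices avoiding all such coincidence loci.

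The main obstacle is identifying the correct ansatz for $c_{k,n}$ and $e_{k,n}$; once the telescoping reformulation of \eqref{eq12} is in hand the explicit forms are nearly forced, and the pleasant feature---that the off-diagonal compatibility is automatic rather than producing a coupled two-index recurrence---makes every subsequent verification a one-line check.
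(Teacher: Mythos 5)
Your proposal is correct and follows essentially the same route as the paper: it is Anderson's model with weights rescaled by $\sqrt{d_n}$ and perturbed by $\varepsilon_k$, and your symmetric splitting $a_{k,n}=y_{k,n}=\sqrt{c_{k,n}}$, $x_{k,n}=b_{k,n}=\sqrt{e_{k,n}}$ yields exactly the same products $a_{k,n}y_{k,n}=d_n(n+1-k)/(n(n+1))$ and $x_{k,n}b_{k,n}=d_n(k+\varepsilon_k)/(n(n+1))$ as the paper's asymmetric choice of denominators $n$ and $n+1$, so the off-diagonal cancellation, the diagonal-block identity, positivity via \eqref{eq22}, compactness via \eqref{eq32}, and the distinctness argument (with the $\varepsilon_k$ chosen distinct) all coincide with the paper's verifications. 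The only blemish, which you did not check explicitly but which is equally present in the paper's own computation of $D_1$, is the first block: the construction actually produces $c_{1,1}+e_{1,1}=d_1(1+\varepsilon_1/2)$ as the $(1,1)$ entry rather than the stated $d_1+\varepsilon_1/2$.
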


\begin{proof}
  We first show that $d_{k,n}>0$ for all $1 \le k \le n$.
  Indeed, $d_{1,1}=d_1 + \varepsilon_1/2 > 0$.
  And for $n\ge 1$ and $1 \le k \le n+1$,
  \begin{equation*}
    d_{k,n+1}=\frac{d_{n+1}-d_n}{n+1}+\frac{\varepsilon_k d_{n+1}}{(n+1)(n+2)}-\frac{\varepsilon_{k-1} d_n}{n(n+1)}>0
  \end{equation*}
  is equivalent to the inequality
  \begin{equation*}
    d_{n+1}\left(1+\frac{\varepsilon_k}{n+2}\right)> d_n\left(1+\frac{\varepsilon_{k-1}}{n}\right)
  \end{equation*}
  which follows from the left-hand inequality in \eqref{eq22}.
  Thus $D$ is strictly positive.

  In order to establish that $D$ is a commutator of compact operators, we will rely on the construction used in the proof of \Cref{thm3.4'} but with a controlled perturbation of matrices in \eqref{eq5}.
  Indeed, Anderson's original parameters were (after setting his parameter $t=1/2$)
  \begin{equation*}%\label{eq2}
    \begin{aligned}
      a_{k,n}&=\frac{\sqrt{n+1-k}}{n},\\
      b_{k,n}&=\frac{\sqrt{k}}{n+1},
    \end{aligned}
    \quad \begin{aligned}
      x_{k,n}&=\frac{\sqrt{k}}{n}, \\
      y_{k,n}&=\frac{\sqrt{n+1-k}}{n+1}.
    \end{aligned}
  \end{equation*}
  We define the numbers:
  \begin{equation}
    \label{eq2}
    \begin{gathered}
      a_{k,n}=\sqrt{d_n}\frac{\sqrt{n+1-k}}{n},\quad x_{k,n}=\sqrt{d_n}\frac{\sqrt{k+\varepsilon_k}}{n}, \\
      b_{k,n}=\sqrt{d_n}\frac{\sqrt{k+\varepsilon_k}}{n+1}, \quad
      y_{k,n}=\sqrt{d_n}\frac{\sqrt{n+1-k}}{n+1},
    \end{gathered}
  \end{equation}
  for $n\in\mathbb{N}$ and $1\le k\le n$;
  the matrices $A_n,X_n, B_n,Y_n$ as in \eqref{eq5};
  the operators $C,Z$ as in \eqref{eq6}.
  Next we show $C,Z$ are compact and $D=CZ-ZC$ for $D$ as defined above.

  It is easy to verify that $C$ and $Z$ are compact.
  Indeed, the inequalities $\abs{a_{k,n}},\abs{b_{k,n}},\abs{x_{k,n}},\abs{y_{k,n}}\le \sqrt{d_n/n}$ are obvious and they imply that $\norm{A_n},\norm{B_n},\norm{X_n},\norm{Y_n}\le \sqrt{d_n/n}$.
  In view of the right-hand inequality in \eqref{eq22} and the second condition in \eqref{eq32}, i.e., $0\le d_{n+1}\le \alpha_{n+1}$ and $\alpha_n/n\to 0$, we obtain that $\norm{A_n},\norm{B_n},\norm{X_n},\norm{Y_n}\to 0$ as $n\to\infty$, so $C$ and $Z$ are compact.

  A calculation shows that off-diagonal blocks of $CZ-ZC$ are of the form $A_nX_{n+1}-X_nA_{n+1}$ and $B_{n+1}Y_n-Y_{n+1}B_n$.
  It follows that they all vanish if and only if
  \begin{equation}
    \label{eq2new} a_{k,n}x_{k,n+1}=x_{k,n}a_{k+1,n+1}\quad\mbox{ and }\quad
    b_{k,n+1}y_{k,n}=y_{k+1,n+1}b_{k,n},
  \end{equation}
  for $n\in\mathbb{N}$ and $1\le k\le n$.
  A straightforward verification that all of these hold is left to the reader.

  Since the off-diagonal blocks of $CZ-ZC$ are zero, $CZ-ZC$ is a block diagonal matrix.
  And using the fact that $C$ and $Z$ have the forms \eqref{eq6} it is straightforward to verify that the diagonal  blocks of $D$ are given by the formulas \eqref{eqD_N}
  \begin{equation}
    \label{eq77}
    D_{n+1}=A_{n+1}Y_{n+1}-X_{n+1}B_{n+1}+B_nX_n-Y_nA_n,\quad n\ge 0,
  \end{equation}
  with the convention that $A_0=B_0=X_0=Y_0=0$.
  Thus, the first ($1\times 1$) block is
  \begin{equation}
    \label{eq3new}
    D_1=A_1Y_1-X_1B_1=\begin{pmatrix} a_{1,1}y_{1,1}+x_{1,1}b_{1,1} \end{pmatrix}=
    \begin{pmatrix} \frac{d_1}{2}+\frac{d_1(1+\varepsilon_1)}{2} \end{pmatrix}
    =\begin{pmatrix} d_1+\frac{\varepsilon_1}{2} \end{pmatrix}=\begin{pmatrix} d_{1,1} \end{pmatrix}.
  \end{equation}
  And using \eqref{eq77}, it is an exercise in matrix algebra to show that, for $n\ge 1$, the $(n+1)\times (n+1)$ diagonal block of $CZ-ZC$ equals
  \begin{equation*}
    D_{n+1}=
    \frac{d_{n+1}-d_n}{n+1} I_{n+1}+
    \begin{pmatrix}
      \frac{\varepsilon_1 d_{n+1}}{(n+1)(n+2)}&&&\\&\frac{\varepsilon_2 d_{n+1}}{(n+1)(n+2)}-\frac{\varepsilon_1 d_n}{n(n+1)}&&\\&&\ddots&\\&&&\frac{\varepsilon_{n+1} d_{n+1}}{(n+1)(n+2)}-\frac{\varepsilon_n d_n}{n(n+1)}
    \end{pmatrix},
  \end{equation*}
  where the symbol $I_{n+1}$ denotes the $(n+1)\times(n+1)$ identity matrix.
  Thus each $(n+1)\times (n+1)$ diagonal block  is itself a diagonal matrix.
  A comparison with \eqref{eq12} shows that these diagonal entries are exactly $\seq{d_{k,n+1}}_{k=1}^{n+1}$ which proves that $D$ is a commutator of compact operators.

  Finally, we note that it is often the case that $d_{k,n}$ are all distinct.
  Indeed, notice that in the inductive construction in the proof of \Cref{lemma2.7new} it suffices to choose $d_{n+1}$ in the interval \eqref{eq79} so that none of the values $d_{k,n+1}$ ($1 \le k \le n+1$) coincide with the values $d_{j,m}$ ($1 \le j \le m \le n+1$).
  Whenever $\seq{\varepsilon_k}_{k=0}^{\infty}$ are themselves distinct, this is trivially possible.
  Indeed, since each $d_{k,n+1}$ is a differentiable function of $d_{n+1}$ for each $k$, with derivative $1/(n+1) + \varepsilon_k/((n+1)(n+2))$ and the $\varepsilon_k$ are distinct positive numbers, there are at most finitely many choices ($(n+1)(n+2)/2$) of $d_{n+1}$ for which $d_{k,n+1} = d_{j,n+1}$.
  Likewise, since each $d_{k,n+1}$ is a strictly increasing function of $d_{n+1}$, there are only finitely many choices of $d_{n+1}$ for which $d_{k,n+1} = d_{j,m}$ for some $1 \le j \le m \le n$ and $1 \le k \le n+1$.
  Since there are only finitely many such values but infinitely many choices for $d_{n+1}$ in the interval \eqref{eq79}, it is possible to choose all the $d_{k,n}$ distinct.
  In fact, choosing each $d_{n+1}$ in the interval \eqref{eq79} uniformly at random (when the $\seq{\varepsilon_k}_{k=0}^{\infty}$ are distinct) would result in distinct $d_{k,n}$ with probability $1$.
\end{proof}

These techniques somewhat resemble partial results on the same distinctness problem appearing in the Davidson, Marcoux, and Radjavi preprint (private communication) \cite{DMR} mentioned earlier.

% % % % % % % % % % % % % % % % % % % % % % % % % % % % % % % % % % % % % % % % % % % % 
% % % % % % % % % % % % % % % % % % % % % % % % % % % % % % % % % % % % % % % % % % % % 

\section{Anderson's model and its generalization: obstructions and limitations}
\label{sec3a}

In view of the progress accomplished in \Cref{sec3}, it is natural to ask whether a clever choice of entries in the off-diagonal blocks of Anderson's model could always yield an arbitrary positive diagonal operator as a commutator of compact operators.
It turns out that this is not the case.
Indeed, the arithmetic growth of his block sizes presents an obstruction (see \Cref{cor:anderson-model-obstruction} and \Cref{thm:exponential-growth-block-size-required}).

In this section, we will concern ourselves with the limitations of Anderson's model by establishing necessary conditions on certain generalizations of his model.
Throughout this section we will express $C,Z$ in \term{block matrix form} with finite block sizes, that is,
\begin{equation}
  \label{E}
  C =
  \begin{pmatrix}
    C_1 &  A_1  &*  &\dots          \\
    B_1 & C_2&A_2   &\ddots           \\
    * &B_2 &C_3 &\ddots            \\
    \vdots &\ddots &\ddots  &\ddots
  \end{pmatrix},
  \qquad
  Z =
  \begin{pmatrix}
    Z_1 &  X_1  &*  &\dots          \\
    Y_1 & Z_2&X_2   &\ddots           \\
    * &Y_2 &Z_3 &\ddots            \\
    \vdots &\ddots &\ddots  &\ddots
  \end{pmatrix},
\end{equation}
with the central blocks $C_n, Z_n$ of size $k_n\times k_n$.
We will be especially interested in the case when $C$ (or alternatively $Z$) is in \term{block tridiagonal form};
that is, when the blocks of $C$ (or alternatively $Z$) specified by asterisks in \eqref{E} are zero.
This is pertinent for two reasons:
\begin{enumerate*} 
\item in Anderson's model, both operators were in block tridiagonal form;
\item in \Cref{sec2}, we show how to place any operator in block tridiagonal form with respect to some basis.
\end{enumerate*}

Our constraints will often be phrased in terms of norm conditions on the \emph{rectangular} blocks $A_n$ and $B_n$.
Of course, $\norm{A_n}$ is the usual operator norm, but we will also use the notation $\norm{A_n}_1$ to refer to the trace norm $\trace((A_n^{*} A_n))^{1/2})$.
We note the submultiplicative inequality $\norm{A_n B_n}_1 \le \norm{A_n}_1 \norm{B_n}$, or $\norm{A_n B_n}_1 \le \norm{B_n}_1 \norm{A_n}$, is standard (see \cite[p.~454, 7.3.P16]{HJ-2013-MA}).
Of course, this also generalizes to $\norm{A_n B_n}_1 \le \norm{A_n}_p \norm{B_n}_q$ whenever $1/p + 1/q = 1$, where $\norm{A_n}_p := \trace((A_n^{*}A_n)^{p/2})^{1/p}$ denotes the Schatten $p$-norm.
In addition, we will also be concerned with conditions involving the block size sequence $\seq{k_n}$.

Our most general result (\Cref{T.1}) provides a trace restriction on the rectangular blocks $A_n,B_n$ when either $C$ or $Z$ is in block tridiagonal form, even in the case when both $C,Z$ are bounded (not necessarily compact) operators.

\begin{theorem}
  \label{T.1}
  Let $C, Z \in \BH$ and $T = \comm{C}{Z}$.
  Let $\seq{e_n}$ be any basis in which $C,Z$ have block matrix form \eqref{E}, i.e.,
  \begin{equation}
    \label{TDF2}
    C =
    \begin{pmatrix}
      C_1 &  A_1  &*   & \cdots        \\
      B_1 & C_2&A_2  &  \ddots            \\
      * &B_2 &C_3 & \ddots             \\
      \vdots &\ddots &\ddots  &\ddots  \\
    \end{pmatrix}
    \quad \text{and} \quad
    Z =
    \begin{pmatrix}
      Z_1 &  X_1  &*   & \cdots        \\
      Y_1 & Z_2&X_2  &  \ddots            \\
      * &Y_2 &Z_3 & \ddots             \\
      \vdots &\ddots &\ddots  &\ddots  \\
    \end{pmatrix}
  \end{equation}
  both with the central blocks of sizes $k_n \times k_n$ where $\seq{k_n}$ is a sequence of positive integers.
  Let $Q_n$ denotes the projection onto $\bigvee_{j=1}^{k_1+\cdots+k_n} e_j$.
  If either $C$ or $Z$ is block tridiagonal in \eqref{TDF2}, then
  \begin{equation}
    \label{3.11}
    \abs{\trace(Q_n T Q_n)} \le \norm{A_n Y_n - X_n B_n}_1 \le (\norm{A_n}_1 + \norm{B_n}_1) \norm{Z} \le k_n(\norm{A_n} + \norm{B_n}) \norm{Z}.
  \end{equation}
\end{theorem}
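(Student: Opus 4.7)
The plan is to compute $\trace(Q_n T Q_n)$ in closed form as $\trace(A_n Y_n - X_n B_n)$ and then apply standard trace-norm estimates. Throughout I abbreviate $P := Q_n$. The first step is to insert $I = P + P^\perp$ between the factors in each summand of $T = CZ - ZC$, which yields
\begin{equation*}
  PTP = \comm{PCP}{PZP} + (PCP^\perp)(P^\perp ZP) - (PZP^\perp)(P^\perp CP).
\end{equation*}
Since $PCP$ and $PZP$ are supported on the finite-dimensional subspace $P\Hil$, their commutator is a finite-rank operator whose trace vanishes by the usual cyclic property on matrices, leaving $\trace(PTP) = \trace((PCP^\perp)(P^\perp ZP)) - \trace((PZP^\perp)(P^\perp CP))$.

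The second and crucial step is to use the block tridiagonal hypothesis to collapse each of the two remaining trace terms to a single block trace. Suppose first that $C$ is block tridiagonal in \eqref{TDF2}. Then, as block matrices, $PCP^\perp$ has $A_n$ as its only nonzero block (at position $(n, n+1)$) and $P^\perp CP$ has $B_n$ as its only nonzero block (at position $(n+1, n)$). A direct block-multiplication calculation, using that $P^\perp ZP$ is supported in block rows $i > n$ and block columns $j \le n$, shows that the only nonzero diagonal block of $(PCP^\perp)(P^\perp ZP)$ is $A_n Y_n$ at position $(n, n)$; analogously, the only diagonal block of $(PZP^\perp)(P^\perp CP)$ is $X_n B_n$. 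If instead $Z$ is block tridiagonal, the roles switch: now $P^\perp ZP$ and $PZP^\perp$ each have a single nonzero block ($Y_n$ and $X_n$ respectively), and this single nonzero block forces the other (possibly fully general) factor to contribute only through its boundary corner block $A_n$ or $B_n$. In either case,
\begin{equation*}
  \trace(Q_n T Q_n) = \trace(A_n Y_n - X_n B_n),
\end{equation*}
and the first inequality in \eqref{3.11} follows from the standard estimate $\abs{\trace(M)} \le \norm{M}_1$. This case analysis is the only genuinely delicate step of the proof.

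For the remaining two inequalities in \eqref{3.11}, the triangle inequality together with the submultiplicative bound $\norm{UV}_1 \le \norm{U}_1 \norm{V}$ (already invoked in the paragraph preceding the theorem) yields $\norm{A_n Y_n - X_n B_n}_1 \le \norm{A_n}_1 \norm{Y_n} + \norm{X_n} \norm{B_n}_1$; since $X_n, Y_n$ are compressions of $Z$ by orthogonal projections, $\norm{X_n}, \norm{Y_n} \le \norm{Z}$, which gives the second inequality. The third follows from $\norm{M}_1 \le \rank(M) \norm{M}$ combined with $\rank(A_n), \rank(B_n) \le \min(k_n, k_{n+1}) \le k_n$.
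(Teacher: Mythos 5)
Your proof is correct, and it takes a genuinely different route to the key identity $\trace(Q_nTQ_n)=\trace(A_nY_n-X_nB_n)$. The paper introduces the individual block projections $P_j$ (so $Q_n=P_1+\cdots+P_n$), computes each block compression $T_j$ of $[C,Z]$ explicitly, observes that $\trace(T_j)=\trace(A_jY_j-X_jB_j)-\trace(A_{j-1}Y_{j-1}-X_{j-1}B_{j-1})$, and then telescopes the sum $\sum_{j=1}^n\trace(T_j)$. You instead set $P=Q_n$ and split once via $I=P+P^\perp$, reducing the problem to $PTP=\comm{PCP}{PZP}+(PCP^\perp)(P^\perp ZP)-(PZP^\perp)(P^\perp CP)$, killing the finite-matrix commutator by cyclicity, and then noting that block tridiagonality of $C$ (or of $Z$) forces a single nonzero block in $PCP^\perp$ and $P^\perp CP$ (respectively in $PZP^\perp$ and $P^\perp ZP$), which collapses each remaining trace to one block trace. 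Your version avoids the telescoping entirely and needs only one projection split; the paper's version makes the per-block contributions explicit (which the authors reuse in later computations, e.g., \eqref{eqD_N} and \eqref{eq:telescoping-partial-trace}). Both are sound, and the estimates after the identity are identical.
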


\begin{proof}
  Let $P_n$ denote the projection onto $\bigvee_{j=k_{n-1}+1}^{k_n} e_j$, where we set $k_0 = 0$ for convenience.
  Then clearly $Q_n = P_1 + \cdots + P_n$.
  Let $T_n$ denote the compression of $T = \comm{C}{Z}$ to $P_n\Hil$.
  Since either $C$ or $Z$ is block tridiagonal, a straightforward computation produces
  \begin{equation*}
    T_1 = A_1 Y_1 - X_1 B_1 + \comm{C_1}{Z_1},
  \end{equation*}
  and for all $n \ge 1$,
  \begin{equation*}
    T_{n+1} = A_{n+1} Y_{n+1} - X_{n+1} B_{n+1} - A_n Y_n + X_n B_n + \comm{C_{n+1}}{Z_{n+1}}.
  \end{equation*}
  Since $\comm{C_n}{Z_n}$ is a commutator of finite matrices, it has trace zero, and therefore
  \begin{equation*}
    \trace(T_1) = \trace(A_1 Y_1 - X_1 B_1),
  \end{equation*}
  and for all $n \ge 1$,
  \begin{equation*}
    \trace(T_{n+1}) = \trace(A_{n+1} Y_{n+1} - X_{n+1} B_{n+1} - A_n Y_n + X_n B_n).
  \end{equation*}
  Consequently, the following sum is telescoping, and hence
  \begin{equation}
    \label{eq:telescoping-partial-trace}
    \trace(Q_nTQ_n) = \sum_{j=1}^n \trace(T_j) = \trace(A_n Y_n - X_n B_n).
  \end{equation}
  Therefore,
  \begin{align*}
    \abs{\trace(Q_n T Q_n)} &= \abs{\trace(A_n Y_n - X_n B_n)} \\
                            &\le \norm{A_n Y_n - X_n B_n}_1 \\
                            &\le \norm{A_n}_1 \norm{Y_n} + \norm{B_n}_1 \norm{X_n} \\
                            &\le (\norm{A_n}_1 + \norm{B_n}_1) \norm{Z} \\
                            &\le k_n(\norm{A_n} + \norm{B_n}) \norm{Z}. \qedhere
  \end{align*}
\end{proof}

\begin{remar}
  Other choices of H\"{o}lder's inequality yield similar information concerning Schatten $p$-norms which we will not pursue here.
\end{remar}

When $T = \comm{C}{Z}$ is either trace-class or positive, the sequence $\trace(Q_n T Q_n)$ appearing in \eqref{3.11} has a (possibly infinite) limit, which quickly leads to the following corollary.
Here, additional information can be extracted when $Z$ is compact.

\begin{corollary}
  Let $C,Z \in \BH$ and $T = \comm{C}{Z}$, and let $\seq{e_n}$, $\seq{k_n}$ and $\seq{Q_n}$ be as in \Cref{T.1}, with either $C$ or $Z$ in block tridiagonal form.
  If $T \neq 0$ is trace-class or positive, then
  \begin{equation}
    \label{3.11-trace-class-positive}
    0 < \frac{\abs{\trace(T)}}{\norm{Z}} \le \liminf_{n \to \infty} (\norm{A_n}_1 + \norm{B_n}_1) \le \liminf_{n \to \infty} k_n(\norm{A_n} + \norm{B_n}).
  \end{equation}
  Moreover, if $Z$ is compact, then these limits inferior must in fact be infinite.
\end{corollary}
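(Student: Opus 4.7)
The plan is to derive both statements by passing to $\liminf$ in the chain of inequalities from \Cref{T.1} after identifying the left-hand side $\abs{\trace(Q_n T Q_n)}$ as a sequence whose lower limit equals $\abs{\trace(T)}$. For the first inequality, I would treat the two hypotheses separately. If $T$ is trace-class, the partial sums $\trace(Q_n T Q_n) = \sum_{j=1}^{k_1 + \cdots + k_n} \seq{Te_j, e_j}$ converge absolutely to $\trace(T)$ since trace-class operators have summable diagonal entries in any orthonormal basis. If $T$ is positive (not necessarily trace-class), then $Q_n T Q_n$ forms an increasing sequence of positive finite-rank operators whose partial traces increase monotonically to $\trace(T) \in (0, \infty]$, where strict positivity uses $T \ne 0$. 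Either way, $\abs{\trace(Q_n T Q_n)} \to \abs{\trace(T)}$ in $[0, \infty]$, and noting that $Z \ne 0$ (otherwise $T = \comm{C}{Z} = 0$, contradicting $T \ne 0$), the chain in \eqref{3.11} divided by $\norm{Z}$ yields \eqref{3.11-trace-class-positive} upon taking $\liminf$.

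For the second, stronger claim assuming $Z$ is compact, the idea is to replace the crude bound $\norm{A_n Y_n - X_n B_n}_1 \le (\norm{A_n}_1 + \norm{B_n}_1) \norm{Z}$ by the tighter version
\begin{equation*}
  \norm{A_n Y_n - X_n B_n}_1 \le \norm{A_n}_1 \norm{Y_n} + \norm{X_n} \norm{B_n}_1 \le \max(\norm{X_n}, \norm{Y_n}) (\norm{A_n}_1 + \norm{B_n}_1),
\end{equation*}
and then to exploit compactness to force $\max(\norm{X_n}, \norm{Y_n}) \to 0$. Writing $P_n := Q_n - Q_{n-1}$ for the $n$-th block projection, the $P_n$ are mutually orthogonal with $\sum_n P_n = I$, hence $P_n \to 0$ strongly. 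Compactness of $Z$ then gives $\norm{Z P_n} \to 0$ and $\norm{P_n Z} \to 0$, so $\norm{X_n} = \norm{P_n Z P_{n+1}} \to 0$ and likewise $\norm{Y_n} \to 0$.

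Combining these, $\abs{\trace(Q_n T Q_n)} \le \max(\norm{X_n}, \norm{Y_n})(\norm{A_n}_1 + \norm{B_n}_1)$. Since the left side tends to $\abs{\trace(T)} > 0$ while the leading factor on the right tends to $0$, the remaining factor $\norm{A_n}_1 + \norm{B_n}_1$ must tend to $\infty$; in particular its $\liminf$ is $\infty$, and hence so is $\liminf_n k_n(\norm{A_n} + \norm{B_n})$. The only subtle point is handling the possibility that $T$ is trace-class with $\trace(T) = 0$, in which case the strict inequality $0 < \abs{\trace(T)}/\norm{Z}$ should be read as a condition on $T$ (e.g., $T$ positive, or trace-class with nonzero trace); outside this case the first bound is trivially true, and the compactness-driven blow-up argument requires $\abs{\trace(T)} \ne 0$ to obtain its contradiction. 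I expect this bookkeeping about when the strict positivity hypothesis is active, together with properly justifying the strong convergence $P_n \to 0$, to be the only non-routine aspects of the proof.
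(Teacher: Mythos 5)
Your proposal is correct and follows essentially the same route as the paper: take the limit inferior in the chain of inequalities from \Cref{T.1}, identify $|\trace(Q_n T Q_n)| \to |\trace(T)|$, and for compact $Z$ replace the crude bound $\|\cdot\|_1\|Z\|$ by a version involving $\|X_n\|, \|Y_n\|$ so that compactness forces the $Z$-factor to vanish and the $(\|A_n\|_1 + \|B_n\|_1)$ factor to blow up (the paper uses $\|X_n\| + \|Y_n\|$ where you use $\max$; both work). You are also right to flag the trace-class case with $\trace(T) = 0$: a nonzero trace-class commutator can have zero trace, in which case the first strict inequality in \eqref{3.11-trace-class-positive} fails and the blow-up argument for compact $Z$ stalls — the paper's own proof glosses over this, asserting $0 < |\trace(T)|$ directly from ``$T \neq 0$ trace-class or positive,'' so the statement should really be read with the additional assumption $\trace(T) \neq 0$ in the trace-class branch, exactly as you observe.
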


\begin{proof}
  To prove \eqref{3.11-trace-class-positive}, apply \Cref{T.1}, divide by $\norm{Z}$ throughout in \eqref{3.11} and take the limit inferior.
  Meanwhile, note that since $T \neq 0$ is trace-class or positive, then the trace is well-defined (including the trace being infinite), and hence
  \begin{equation*}
    0 < \frac{\abs{\trace(T)}}{\norm{Z}} = \lim_{n \to \infty} \frac{\abs{\trace(Q_n T Q_n)}}{\norm{Z}} = \liminf_{n \to \infty} \frac{\abs{\trace(Q_n T Q_n)}}{\norm{Z}}.
  \end{equation*}

  The claim concerning compact $Z$ takes slightly more work.
  Note that
  \begin{equation*}
    \abs{\trace(Q_nTQ_n)} \le \norm{A_n Y_n - X_n B_n}_1 \le \norm{A_n}_1 \norm{Y_n} + \norm{B_n}_1 \norm{X_n} \le (\norm{A_n}_1 + \norm{B_n}_1)(\norm{X_n} + \norm{Y_n}),
  \end{equation*}
  Since $T \neq 0$ is either trace-class or positive, the first inequality of \eqref{3.11} guarantees $\norm{X_n} + \norm{Y_n}$ is strictly positive for all sufficiently large $n$.
  Therefore, 
  dividing by $\norm{X_n} + \norm{Y_n}$ in the previous display yields
  \begin{equation*}
    \frac{\abs{\trace(Q_n T Q_n)}}{\norm{X_n} + \norm{Y_n}} \le \norm{A_n}_1 + \norm{B_n}_1.
  \end{equation*}
  Notice that $\norm{X_n},\norm{Y_n} \to 0$ since $Z$ is compact.
  Taking the limit inferior in the previous display and noting that the left-hand side approaches infinity proves the claim.
\end{proof}

Of course, in the case of interest when both $C,Z$ are compact, this becomes:

\begin{corollary}
  \label{cor:CZ-compact-norm-size-constraint}
  Let $C,Z \in \KH$ and $T = \comm{C}{Z}$, and let $\seq{e_n}$, $\seq{k_n}$ and $\seq{Q_n}$ be as in \Cref{T.1}, with either $C$ or $Z$ in block tridiagonal form.
  If $T \neq 0$ is trace-class or positive, then
  \begin{equation*}
    \lim_{n \to \infty} k_n(\norm{A_n} + \norm{B_n}) = \infty = \lim_{n \to \infty} k_n(\norm{X_n} + \norm{Y_n})
  \end{equation*}
  Moreover, the same holds for arbitrary $T$ whenever $\liminf_{n \to \infty} \abs{\trace(Q_n T Q_n)} > 0$.
\end{corollary}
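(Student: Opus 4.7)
The plan is to upgrade the $\liminf$ bound of the preceding corollary to a genuine limit, to obtain the symmetric statement for $X_n,Y_n$ in parallel, and to handle the moreover clause along the way, all by playing the two natural H\"older estimates of $\norm{A_nY_n-X_nB_n}_1$ from \Cref{T.1} against each other and then invoking compactness of $C$ (respectively $Z$) to drive one side of the resulting quotient to zero.

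First I would observe that for $T\ne 0$ trace-class (with nonzero trace, implicit in the statement) or positive, the partial traces $\abs{\trace(Q_nTQ_n)}$ converge to $\abs{\trace(T)}\in(0,\infty]$: for trace-class $T$ this is standard, and for positive $T$ the sequence $\trace(Q_nTQ_n)$ is monotone increasing with limit $\trace(T)\in(0,\infty]$. Next, starting from the basic inequality $\abs{\trace(Q_nTQ_n)}\le\norm{A_nY_n-X_nB_n}_1$ of \Cref{T.1}, I would derive the two symmetric H\"older estimates
\begin{align*}
\norm{A_nY_n-X_nB_n}_1 &\le (\norm{A_n}_1+\norm{B_n}_1)(\norm{X_n}+\norm{Y_n}),\\
\norm{A_nY_n-X_nB_n}_1 &\le (\norm{A_n}+\norm{B_n})(\norm{X_n}_1+\norm{Y_n}_1),
\end{align*}
combined with the rank bound $\norm{M}_1\le k_n\norm{M}$ for any rectangular block of size $k_n\times k_{n+1}$ or $k_{n+1}\times k_n$ (already used in \Cref{T.1}). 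Rearranging each yields
\begin{equation*}
\frac{\abs{\trace(Q_nTQ_n)}}{\norm{X_n}+\norm{Y_n}}\le k_n(\norm{A_n}+\norm{B_n}),\qquad \frac{\abs{\trace(Q_nTQ_n)}}{\norm{A_n}+\norm{B_n}}\le k_n(\norm{X_n}+\norm{Y_n}).
\end{equation*}

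Finally, compactness closes both halves. Because $Z$ is compact, $\norm{X_n},\norm{Y_n}\to 0$, so in the first ratio the numerator tends to $\abs{\trace(T)}>0$ (possibly $\infty$) while the denominator tends to $0$, forcing $k_n(\norm{A_n}+\norm{B_n})\to\infty$; the parallel argument, using compactness of $C$ and hence $\norm{A_n},\norm{B_n}\to 0$, gives $k_n(\norm{X_n}+\norm{Y_n})\to\infty$. The moreover clause is identical, with the role of $\abs{\trace(T)}$ played by any $\epsilon>0$ such that $\abs{\trace(Q_nTQ_n)}>\epsilon$ for all sufficiently large $n$, as supplied by the hypothesis $\liminf\abs{\trace(Q_nTQ_n)}>0$. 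I do not anticipate any substantive obstacle; the only delicate point is confirming that the denominators $\norm{X_n}+\norm{Y_n}$ and $\norm{A_n}+\norm{B_n}$ are nonzero for all large $n$ so the divisions are legal, which is immediate from the displayed inequalities, since a vanishing denominator would force $\abs{\trace(Q_nTQ_n)}=0$, contradicting its eventual strict positivity.
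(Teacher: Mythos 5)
Your proposal is correct and follows essentially the same route the paper takes: this corollary is an immediate consequence of the proof of the preceding corollary (dividing the estimate $\abs{\trace(Q_nTQ_n)} \le \norm{A_nY_n - X_nB_n}_1$ from \Cref{T.1} by a quantity driven to zero by compactness), and your only addition is to write out explicitly the symmetric H\"older estimate $\norm{A_nY_n-X_nB_n}_1 \le (\norm{A_n}+\norm{B_n})(\norm{X_n}_1+\norm{Y_n}_1)$ needed for the $k_n(\norm{X_n}+\norm{Y_n})$ half, which the paper leaves implicit. Your parenthetical caveat that the trace-class case implicitly requires $\trace(T)\neq 0$ matches the paper's own (tacit) assumption, and your justification that the denominators are eventually nonzero is the same one used in the paper.
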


The next theorem presents an obstruction (explicitly realized in \Cref{ex:anderson-model-failure} and the proof of \Cref{lem:exponential-growth-block-size}) to obtaining a positive diagonal operator as a commutator of a compact operator and a bounded operator, at least one of which is block tridiagonal.

\begin{theorem}
  \label{thm:k_n-average-diagonal-obstruction}
  Let $\seq{k_n}$ be a sequence of positive integers and $\seq{d_n}$ be a nonnegative sequence for which
  \begin{equation}
    \label{eq34}
    \limsup_{n \to \infty} \frac{1}{k_n}\left(d_1 +  \cdots + d_{k_1+\dots+k_n}\right) > 0,
  \end{equation}
  and let $D = \diag\seq{d_n}$ relative to the basis $\seq{e_n}$.
  Then $D$ is not a commutator of a compact operator $C$ and a bounded operator $Z$, both in block matrix form \eqref{E} relative to this basis with central block sizes $k_n \times k_n$, where at least one of $C,Z$ is block tridiagonal.
\end{theorem}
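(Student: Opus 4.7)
The plan is to derive a contradiction by applying \Cref{T.1} directly. Suppose for contradiction that $D = \comm{C}{Z}$ where $C \in \KH$ and $Z \in \BH$, both have block matrix form \eqref{E} with central blocks of size $k_n \times k_n$, and at least one of $C, Z$ is block tridiagonal. Then the hypotheses of \Cref{T.1} are met, so I would immediately obtain
\begin{equation*}
  \abs{\trace(Q_n D Q_n)} \le k_n(\norm{A_n} + \norm{B_n})\norm{Z}
\end{equation*}
for every $n$, where $A_n, B_n$ denote the off-diagonal blocks of $C$. Since $D$ is diagonal with nonnegative entries $\seq{d_n}$ in the basis $\seq{e_n}$ and $Q_n$ is the projection onto the span of the first $k_1 + \cdots + k_n$ basis vectors, the left-hand side above equals $d_1 + \cdots + d_{k_1 + \cdots + k_n}$.

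The key auxiliary step is to show $\norm{A_n} + \norm{B_n} \to 0$ using only the compactness of $C$, regardless of which of $C, Z$ is block tridiagonal. Letting $P_n$ denote the projection onto the $n$-th diagonal block in the basis $\seq{e_n}$, we have $A_n = P_n C P_{n+1}$ and $B_n = P_{n+1} C P_n$, so it suffices to show $\norm{C P_n} \to 0$ and $\norm{P_n C} \to 0$. Since the blocks are mutually orthogonal and finite-dimensional, $P_n \to 0$ strongly; a routine finite-rank approximation of $C$ (approximate $C$ in operator norm by a finite sum of rank-one operators $\xi_i \otimes \eta_i$ and use $P_n \xi_i, P_n \eta_i \to 0$) then yields the required norm convergence.

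Combining these two estimates and dividing by $k_n$ produces
\begin{equation*}
  \frac{d_1 + \cdots + d_{k_1 + \cdots + k_n}}{k_n} \le (\norm{A_n} + \norm{B_n})\norm{Z} \longrightarrow 0,
\end{equation*}
so the limsup of the left-hand side is $0$, contradicting \eqref{eq34}. The only mildly technical step is the vanishing $\norm{A_n}, \norm{B_n} \to 0$, but this is a standard consequence of compactness of $C$ alone. The main conceptual content is that, once this vanishing is established, the factor $k_n$ in \Cref{T.1} is the only possible counterweight that could sustain a positive limsup of the averages in \eqref{eq34}, and the hypothesis is precisely the statement that this counterweight is too weak.
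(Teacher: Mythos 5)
Your proposal is correct and follows essentially the same route as the paper: apply \Cref{T.1} to bound the partial traces $d_1+\cdots+d_{k_1+\cdots+k_n}$ by $k_n(\norm{A_n}+\norm{B_n})\norm{Z}$, then contradict \eqref{eq34} using the fact that compactness of $C$ forces $\norm{A_n},\norm{B_n}\to 0$. The only difference is cosmetic: the paper leaves that last vanishing step implicit (it is the contrapositive form, cf.\ \Cref{thm:bandable}), whereas you spell it out via strong convergence of the block projections and finite-rank approximation.
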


\begin{proof}
  Suppose that $D$ is a commutator of operators $C,Z$ of the specified form and at least one of these is in block tridiagonal form.
  We will show that $C$ cannot be compact.
  
  Let $Q_n$ be the projection onto $\bigvee_{j=1}^{k_1 + \cdots + k_n} e_j$.
  Then by \Cref{T.1},
  \begin{equation*}
    d_1 + \cdots + d_{k_1 + \cdots + k_n} = \trace(Q_n D Q_n) \le k_n (\norm{A_n} + \norm{B_n}) \norm{Z}.
  \end{equation*}
  Dividing by $k_n \norm{Z}$ and taking the limit superior, by hypothesis we obtain
  \begin{equation*}
    0 < \limsup_{n \to \infty} \frac{1}{k_n \norm{Z}}\left(d_1 +  \cdots + d_{k_1+\dots+k_n}\right) \le \limsup_{n \to \infty} (\norm{A_n} + \norm{B_n}). 
  \end{equation*}
  Therefore at least one of $\norm{A_n},\norm{B_n}$ does not converge to zero, and hence $C$ cannot be compact.
\end{proof}

\begin{remar}
  Because it is always possible to rearrange $\seq{d_n}$ when $\lim d_n=0$ so that the limit superior in \eqref{eq34} is 0, it becomes clear that \Cref{thm:k_n-average-diagonal-obstruction} is dependent on the order of $\seq{d_n}$, even for a fixed sequence of block sizes $\seq{k_n}$.

  It is also useful to know that condition \eqref{eq34} holds for some rearrangement of $\seq{d_n}$ if and only if it holds for its decreasing rearrangement.
  Indeed, if $\seq{d_n^{\star}}$ denotes the decreasing rearrangement of $\seq{d_n}$ then clearly for each $n \ge 1$,
  \begin{equation*}
    \frac{1}{k_n}\left(d_1 +  \cdots + d_{k_1+\dots+k_n}\right)\le \frac{1}{k_n}\left(d_1^{\star} +  \cdots + d_{k_1+\dots+k_n}^{\star}\right).
  \end{equation*}
\end{remar}

The previous theorem has the following immediate corollary concerning commutators obtained via the Anderson model detailed in \eqref{eq6}--\eqref{eq5}.

\begin{corollary}
  \label{cor:anderson-model-obstruction}
  Suppose that $\seq{d_n}$ is a sequence of strictly positive numbers such that
  \begin{equation}
    \label{eq33}
    \limsup \frac{1}{n}\left(d_1 +  \cdots + d_{n(n+1)/2}\right)> 0,
  \end{equation}
  and $D = \diag\seq{d_n}$.
  Then $D$ cannot be obtained using the Anderson model.
  That is to say, $D$ is not a commutator of a compact  operator $C$ and a bounded operator $Z$ both of the form \eqref{eq6}--\eqref{eq5} with central blocks sizes  $n\times n$.
  The same holds even more generally when $A_n$, $B_n$, $X_n$, and $Y_n$ are possibly full rectangular matrices, or even if the diagonal blocks of $C$ are nonzero.
\end{corollary}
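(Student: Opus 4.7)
The plan is to recognize this as an essentially immediate specialization of \Cref{thm:k_n-average-diagonal-obstruction} to the Anderson block-size sequence $k_n = n$. In Anderson's model the central square blocks have sizes $1\times 1, 2\times 2, 3\times 3, \ldots$, so $k_n = n$ and consequently $k_1 + k_2 + \cdots + k_n = n(n+1)/2$. Substituting this into the hypothesis \eqref{eq34} gives exactly the condition \eqref{eq33}. Thus the matching between the two hypotheses is purely a renaming step; no new estimate is required.

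First I would verify the structural hypothesis: in Anderson's model \eqref{eq6}, both $C$ and $Z$ are block tridiagonal with zero central diagonal blocks, so in particular \emph{at least one} of $C,Z$ is block tridiagonal, as required by \Cref{thm:k_n-average-diagonal-obstruction}. Next I would apply that theorem with $k_n = n$ and with the given diagonal sequence $\seq{d_n}$. The theorem's conclusion says $D$ cannot be written as $\comm{C}{Z}$ with $C$ compact and $Z$ bounded, both in block matrix form \eqref{E} with central blocks of size $n\times n$, where at least one is block tridiagonal. That is precisely the non-representability statement sought.

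For the final sentence of the corollary (the same obstruction holds if $A_n, B_n, X_n, Y_n$ are permitted to be full rectangular matrices rather than having the shift-like sparsity of \eqref{eq5}, or if the diagonal blocks of $C$ are allowed to be nonzero), I would observe that these generalizations merely enlarge the class of admissible block-tridiagonal operators while retaining the block tridiagonal structure of both $C$ and $Z$, and retaining central block sizes $n\times n$. Hence the hypotheses of \Cref{thm:k_n-average-diagonal-obstruction} still hold verbatim, and its conclusion still applies. If one additionally allowed nonzero diagonal blocks in $Z$ as well, the same reasoning works provided at least one of $C,Z$ remains block tridiagonal, which covers the stated variants.

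The only potential subtlety, rather than a genuine obstacle, is making sure the reader sees that the Anderson block-size arithmetic $k_1 + \cdots + k_n = n(n+1)/2$ matches the index $n(n+1)/2$ in \eqref{eq33}, and that the trace estimate \eqref{3.11} of \Cref{T.1} applied to $D = \diag\seq{d_n}$ yields $d_1 + \cdots + d_{n(n+1)/2} = \trace(Q_n D Q_n) \le n(\norm{A_n} + \norm{B_n})\norm{Z}$, so that the assumption \eqref{eq33} forces $\max\{\norm{A_n},\norm{B_n}\} \not\to 0$ along some subsequence, precluding compactness of $C$. This is a one-line reduction; the corollary therefore requires no additional machinery beyond \Cref{thm:k_n-average-diagonal-obstruction}.
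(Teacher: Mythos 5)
Your proposal is correct and matches the paper's route exactly: the paper states this as an immediate corollary of \Cref{thm:k_n-average-diagonal-obstruction}, obtained by taking $k_n = n$ (so $k_1+\cdots+k_n = n(n+1)/2$) and noting that both $C$ and $Z$ in Anderson's model remain block tridiagonal under the stated generalizations. No further comment is needed.
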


\begin{example}
  \label{ex:anderson-model-failure}
  There are many sequences satisfying \eqref{eq33}, for instance, $d_n = {1}/{\sqrt{n}}$ (whose partial sums satisfy $\sum^{n(n+1)/2}_{j=1} d_j = \Theta(n)$ in Bachmann--Landau notation, i.e., $f(n) = \Theta(g(n))$ means $f(n) = O(g(n))$ and $g(n) = O(f(n))$) is an example where Anderson's model fails by \Cref{cor:anderson-model-obstruction}.
  In \Cref{thm:exponential-growth-block-size-required}, we will see that this is a consequence of the arithmetic (hence subexponential) growth of the block sizes in Anderson's model.
\end{example}

\begin{remark}
  \label{rem:exponential-growth-obstruction-disappears}
  We note that when the block sizes $\seq{k_n}$ grow exponentially (i.e., $k_n \ge \Theta(\rho^n)$ for some $\rho > 1$), then the sequence $\seq{(k_1 + \cdots + k_n)/k_n}$ is bounded above.
  Then, for any nonnegative sequence $\seq{d_n}$ converging to zero, it is well known that its Ces\`{a}ro means tend to zero, and hence
  \begin{align*}
    \limsup_{n \to \infty} \frac{1}{k_n}\left(d_1 +  \cdots + d_{k_1+\dots+k_n}\right)
    &= \limsup_{n \to \infty} \frac{k_1 + \cdots + k_n}{k_n} \cdot \frac{d_1 +  \cdots + d_{k_1+\cdots+k_n}}{k_1 + \cdots + k_n} \\
    &= \limsup_{n \to \infty} \frac{k_1 + \cdots + k_n}{k_n} \cdot \lim_{n \to \infty} \frac{d_1 +  \cdots + d_{k_1+\cdots+k_n}}{k_1 + \cdots + k_n} = 0,
  \end{align*}
  since $(k_1 + \cdots + k_n)/k_n$ is bounded above.
\end{remark}

The next two lemmas show that for any sequence $\seq{k_n}$ which does not grow sufficiently rapidly, there is a strictly positive nonincreasing sequence $\seq{d_n}$ satisfying condition \eqref{eq33}.
Moreover, this growth condition implies the sequence $\seq{k_n}$ grows \emph{at least} exponentially.

\begin{lemma}
  \label{lem:exponential-growth-block-size}
  Let $\seq{k_n}_{n=1}^{\infty}$ be a sequence of positive integers, and let $\seq{s_n}_{n=1}^{\infty}$ denote its sequence of partial sums.
  If for every nonincreasing positive sequence $\seq{d_n}_{n=1}^{\infty}$ converging to zero one has
  \begin{equation*}
    \limsup_{n \to \infty} \frac{1}{k_n} \sum_{j=1}^{s_n} d_j < \infty,
  \end{equation*}
  then $\liminf_{n \to \infty} k_n/s_n > 0$.
\end{lemma}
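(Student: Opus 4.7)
The plan is to prove the contrapositive: assuming $\liminf_{n\to\infty} k_n/s_n = 0$ (equivalently, $\limsup_{n\to\infty} s_n/k_n = \infty$), I will construct a nonincreasing positive sequence $\seq{d_n}$ converging to zero along which the partial-sum quotients blow up. The heuristic is simple: a nonincreasing sequence satisfies $\sum_{j=1}^{s_n} d_j \ge s_n \cdot d_{s_n}$, so if $s_n / k_n$ can be made arbitrarily large along a subsequence, I just need $d_{s_n}$ to decay slowly enough that the product still tends to infinity on that subsequence.

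Concretely, first I extract a strictly increasing subsequence $n_1 < n_2 < \cdots$ with the rapid-growth estimate $s_{n_k}/k_{n_k} \ge 4^k$ for every $k \ge 1$; this is possible precisely because $s_n/k_n$ is unbounded. Next, setting $s_{n_0} := 0$, I define a staircase sequence by
\begin{equation*}
  d_j := 2^{-k} \quad \text{for } s_{n_{k-1}} < j \le s_{n_k},\ k \ge 1.
\end{equation*}
This $\seq{d_j}$ is strictly positive, nonincreasing, and tends to zero, so it is an admissible test sequence for the hypothesis. Since $d_j \ge d_{s_{n_k}} = 2^{-k}$ for all $j \le s_{n_k}$,
\begin{equation*}
  \frac{1}{k_{n_k}} \sum_{j=1}^{s_{n_k}} d_j \;\ge\; \frac{s_{n_k}}{k_{n_k}} \cdot 2^{-k} \;\ge\; \frac{4^k}{2^k} \;=\; 2^k \;\longrightarrow\; \infty,
\end{equation*}
contradicting the stated hypothesis that $\limsup_{n} \frac{1}{k_n} \sum_{j=1}^{s_n} d_j < \infty$ for \emph{every} such $\seq{d_n}$. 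Hence $\liminf_{n\to\infty} k_n/s_n > 0$.

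There is essentially no obstacle here once the correct coordination is found between the subsequence and the staircase heights; the factor $4^k = 2^k \cdot 2^k$ is chosen so that one copy of $2^k$ cancels the decay of $d_{s_{n_k}}$ while the other copy drives the quotient to infinity. The content of the lemma — and the reason it fits the narrative of the section — is that it forces $\seq{k_n}$ itself to grow at least exponentially (since $s_n \ge k_1 + \cdots + k_n$ is controlled by a constant multiple of $k_n$ eventually), setting up the dichotomy exploited in \Cref{thm:exponential-growth-block-size-required}.
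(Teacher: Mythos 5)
Your proof is correct and follows essentially the same route as the paper's: prove the contrapositive, extract a subsequence along which $s_n/k_n$ grows at a prescribed rate, build a nonincreasing staircase test sequence constant on the blocks $(s_{n_{k-1}}, s_{n_k}]$, and use $\sum_{j=1}^{s_{n_k}} d_j \ge s_{n_k} d_{s_{n_k}}$ to force the quotient to infinity. The only difference is cosmetic: the paper takes $s_{n_l}/k_{n_l} > l$ with heights $\log(l+1)/l$, while you take $s_{n_k}/k_{n_k} \ge 4^k$ with heights $2^{-k}$, trading a stronger subsequence extraction for faster-decaying steps.
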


\begin{proof}
  We will prove the contrapositive.
  Suppose that $\liminf_{n \to \infty} k_n/s_n = 0$.
  Then $\limsup_{n \to \infty} s_n/k_n = \infty$.
  So there is a strictly increasing sequence of positive integers $\seq{n_l}_{l=1}^{\infty}$ such that for all $l \ge 1$, $s_{n_l}/k_{n_l} > l$.

  For convenience of notation, set $n_0 := 0$ and $s_0 := 0$.
  Then define a positive nonincreasing sequence $\seq{d_j}_{j=1}^{\infty}$ converging to zero by $d_j := (\log (l+1))/l$ for $s_{n_{l-1}} < j \le s_{n_l}$.
  Then we clearly have for all $l$,
  \begin{equation*}
    \frac{1}{s_{n_l}} \sum_{j=1}^{s_{n_l}} d_j \ge \frac{\log (l+1)}{l}.
  \end{equation*}
  Consequently,
  \begin{equation*}
    \frac{1}{k_{n_l}} \sum_{j=1}^{s_{n_l}} d_j = \frac{s_{n_l}}{k_{n_l}} \cdot \frac{1}{s_{n_l}} \sum_{j=1}^{s_{n_l}} d_j \ge l \cdot \frac{\log (l+1)}{l} = \log (l+1).
  \end{equation*}
  Hence $\limsup_{n \to \infty} \frac{1}{k_n} \sum_{j=1}^{s_n} d_j = \infty$
\end{proof}

\begin{lemma}
  \label{lem:liminf-ratio-positive-theta-exponential}
  Let $\seq{k_n}_{n=1}^{\infty}$ be a sequence of positive integers, and let $\seq{s_n}_{n=1}^{\infty}$ denote its sequence of partial sums.
  If $\liminf_{n \to \infty} k_n/s_n > 0$, then $k_n = \Omega(\rho^n)$ for some $\rho > 1$ (Bachmann--Landau notation, $f(n) = \Omega(g(n))$ is $g(n) = O(f(n))$).
  In fact, for any $0 < \delta < \liminf_{n \to \infty} k_n/s_n$, we may choose $\rho = 1 + \delta$.
\end{lemma}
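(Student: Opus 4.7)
The plan is to convert the hypothesis $\liminf_{n\to\infty} k_n/s_n > 0$ into a recurrence-based geometric lower bound on $s_n$, then pass from there to the exponential lower bound on $k_n$. The key observation is that the telescoping identity $s_n = s_{n-1} + k_n$ interacts very cleanly with a lower bound of the form $k_n \ge \delta s_n$.

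First I would fix any $\delta$ satisfying $0 < \delta < \liminf_{n\to\infty} k_n/s_n$. Note that $\delta < 1$ automatically, since $k_n \le s_n$ forces $\liminf_{n\to\infty} k_n/s_n \le 1$. By the definition of limit inferior, there exists $N$ such that $k_n \ge \delta s_n$ for every $n \ge N$.

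Next I would feed this into $s_n = s_{n-1} + k_n$ to obtain $s_n \ge s_{n-1} + \delta s_n$, i.e., $s_n \ge s_{n-1}/(1-\delta)$, valid for $n \ge N$. Iterating from $n = N$ gives $s_n \ge s_N (1-\delta)^{-(n-N)}$, and multiplying by $\delta$ yields $k_n \ge \delta s_n \ge \delta s_N (1-\delta)^{-(n-N)}$ for all $n \ge N$.

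Finally, the elementary inequality $(1-\delta)(1+\delta) = 1 - \delta^2 < 1$ for $\delta \in (0,1)$ gives $(1-\delta)^{-1} > 1 + \delta$, so $k_n \ge C (1+\delta)^n$ for $n \ge N$, where $C := \delta s_N (1-\delta)^N$ is strictly positive (note $s_N \ge k_1 \ge 1$ since the $k_n$ are positive integers). This establishes $k_n = \Omega((1+\delta)^n)$ with the advertised choice $\rho = 1+\delta$. There is no serious obstacle here: the argument is a short induction powered by the recurrence $s_n = s_{n-1} + k_n$, and the only mildly delicate point is the automatic bound $\delta < 1$, which ensures the factor $1/(1-\delta)$ driving the geometric growth is well-defined.
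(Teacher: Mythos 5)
Your proof is correct and follows essentially the same route as the paper: both convert $k_n \ge \delta s_n$ (eventually) into geometric growth of $s_n$ via the recurrence $s_n = s_{n-1} + k_n$, then multiply by $\delta$ to transfer the bound to $k_n$. The only cosmetic difference is that you first obtain the ratio $1/(1-\delta)$ and then relax it to $\rho = 1+\delta$, whereas the paper derives $s_{n+1} > (1+\delta)s_n$ directly; the substance is identical.
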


\begin{proof}
  Take any $0 < \delta < \liminf_{n \to \infty} k_n/s_n$, and let $\rho = 1 + \delta$.
  Then there is some $N \ge 1$ such that $k_n/s_n > \delta$ for all $n \ge N$.\footnotemark{}
  Then for any $n \ge N$,
  \begin{equation*}
    s_{n+1} = k_{n+1} + s_n > \delta s_{n+1} + s_n > \rho s_n.
  \end{equation*}
  Now a simple induction shows, for all $n \ge N$, $s_n \ge \rho^{n-N} s_N$, and hence $k_n \ge \delta \rho^{n-N} s_N$, so $k_n = \Omega(\rho^n)$.
\end{proof}

\footnotetext{%
  Note that this is essentially a discrete differential inequality since
  \begin{equation*}
    \frac{s_n - s_{n-1}}{n - (n-1)} = k_n > \delta s_n.
  \end{equation*}
}

\begin{theorem}
  \label{thm:exponential-growth-block-size-required}
  Let $\seq{k_n}$ be a sequence of positive integers.
  Suppose that for every strictly positive nonincreasing sequence $\seq{d_n}$ converging to zero, $D = \diag\seq{d_n}$ (relative to a basis $\seq{e_n}_{n=1}^{\infty}$) is a commutator $\comm{C}{Z}$ where $C$ is compact and either $C$ or $Z$ has block tridiagonal form (relative to $\seq{e_n}_{n=1}^{\infty}$) with central block sizes $k_n$.
  Then $\seq{k_n}$ has at least exponential growth: $k_n = \Omega(\rho^n)$ for some $\rho > 1$.
\end{theorem}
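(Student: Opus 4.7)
The plan is to derive the conclusion as a direct chain through the three results immediately preceding the statement: \Cref{thm:k_n-average-diagonal-obstruction}, \Cref{lem:exponential-growth-block-size}, and \Cref{lem:liminf-ratio-positive-theta-exponential}. Write $s_n := k_1 + \cdots + k_n$; the argument runs backward through these three statements.

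First, I would fix an arbitrary strictly positive nonincreasing null sequence $\seq{d_n}$. By hypothesis, $D = \diag \seq{d_n}$ is a commutator $\comm{C}{Z}$ of the specified form (with $C$ compact, $Z$ bounded, central block sizes $k_n$, and one of $C,Z$ block tridiagonal). The contrapositive of \Cref{thm:k_n-average-diagonal-obstruction} applied with this block-size sequence therefore forces
\begin{equation*}
  \limsup_{n \to \infty} \frac{1}{k_n}\left(d_1 + \cdots + d_{s_n}\right) = 0,
\end{equation*}
using that this limsup is nonnegative and cannot be strictly positive. In particular, the limsup is finite.

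Because this conclusion is available for \emph{every} admissible $\seq{d_n}$, the hypothesis of \Cref{lem:exponential-growth-block-size} is met, which yields $\liminf_{n \to \infty} k_n / s_n > 0$. Feeding this into \Cref{lem:liminf-ratio-positive-theta-exponential} then gives $k_n = \Omega(\rho^n)$ for some $\rho > 1$, as desired.

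There is no substantive obstacle here: the statement is a short corollary of the three preceding results, all of which do the heavy lifting. The only bookkeeping is to check that the class of sequences over which we quantify in the hypothesis (strictly positive, nonincreasing, converging to zero) matches the class of sequences in the hypothesis of \Cref{lem:exponential-growth-block-size}, which it does verbatim. The genuine subtleties in the overall development lie not in this theorem but in the construction of the ``bad'' sequence inside \Cref{lem:exponential-growth-block-size} (the piecewise-constant choice $d_j = (\log(l+1))/l$ on each block $s_{n_{l-1}} < j \le s_{n_l}$) and in the discrete-differential-inequality step inside \Cref{lem:liminf-ratio-positive-theta-exponential}; both are already handled above, so nothing further is required.
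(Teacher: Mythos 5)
Your proposal is correct and follows essentially the same route as the paper's own proof: contrapositive of \Cref{thm:k_n-average-diagonal-obstruction} to force the limit superior to vanish for every admissible sequence, then \Cref{lem:exponential-growth-block-size} and \Cref{lem:liminf-ratio-positive-theta-exponential} in sequence. Nothing is missing.
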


\begin{proof}
  Suppose that $\seq{k_n}$ is such a sequence as specified in the theorem.
  Then by hypothesis, for any nonincreasing positive sequence $\seq{d_n}$ converging to zero, $\diag\seq{d_n} = \comm{C}{Z}$ for some compact operator $C$ such that either $C$ or $Z$ is block tridiagonal.
  Then the contrapositive of \Cref{thm:k_n-average-diagonal-obstruction} guarantees
  \begin{equation*}
    \limsup_{n \to \infty} \frac{1}{k_n}\left(d_1 +  \cdots + d_{k_1+\dots+k_n}\right) = 0.
  \end{equation*}
  Since this holds for all positive nonincreasing sequences converging to zero, \Cref{lem:exponential-growth-block-size,lem:liminf-ratio-positive-theta-exponential} prove the desired claim.
\end{proof}

The exponential growth requirement from \Cref{thm:exponential-growth-block-size-required} together with the disappearance of the obstruction \eqref{eq34} when $\seq{k_n}$ grows exponentially (see \Cref{rem:exponential-growth-obstruction-disappears}) suggest the possibility that every strictly positive diagonal operator is a commutator of compact operators via block tridiagonal operators with exponentially growing block sizes.
We will investigate this idea further in \Cref{sec2}, but given the relative success of Anderson's model, it is natural to wonder whether this is possible with an exponential version of Anderson's model (EAM) displayed in \eqref{blockseam} below.
Namely, let $\seq{k_n}$ be a sequence of positive integers satisfying $k_{n+1}\ge q k_n$ for some $q>1$ (which includes the case  equivalent to $a\rho^n$ for integers $a>0,\rho>1$), and let
\begin{equation}
  \label{blockseam}
  \begin{alignedat}{2}
    A_n &=
    \begin{pmatrix}
      a_{1,n} & 0 & \cdots   & 0 & 0 & \cdots & 0 \\
      0 & a_{2,n}  & \ddots  & \vdots & \vdots & \cdots & \vdots \\
      \vdots &\ddots & \ddots & 0 & \vdots & \cdots & \vdots \\
      0 & \cdots & 0 & a_{k_n,n} & 0 & \cdots & 0
    \end{pmatrix},\quad
    &X_n =&
    \begin{pmatrix}
      0 & x_{1,n} & 0  & \cdots & 0 & 0 & \cdots & 0 \\
      \vdots & 0 & x_{2,n} & \ddots & \vdots & \vdots & \cdots & \vdots \\
      \vdots & \vdots & \ddots & \ddots & 0 & \vdots & \cdots & \vdots \\
      0 & 0 & \cdots & 0 & x_{k_n,n} & 0 & \cdots & 0
    \end{pmatrix},\\
    B_n &=
    \begin{pmatrix}
      0 & \cdots &\cdots & 0 \\
      -b_{1,n} & 0 & \cdots & 0 \\
      0 &-b_{2,n}  & \ddots & \vdots \\
      \vdots & \ddots & \ddots & 0 \\
      0 & \cdots & 0 & -b_{k_n,n} \\
      0 & \cdots &\cdots & 0 \\
      \vdots & \vdots & \vdots & \vdots \\
      0 & \cdots &\cdots & 0 \\
    \end{pmatrix},\quad
    &Y_n =&
    \begin{pmatrix}
      y_{1,n} & 0  & \cdots  & 0 \\
      0 & y_{2,n} & \ddots & \vdots \\
      \vdots & \ddots & \ddots & 0 \\
      0 & \cdots & 0 & y_{k_n,n}\\
      0 & \cdots & \cdots & 0 \\
      \vdots & \vdots & \vdots & \vdots \\
      0 & \cdots &\cdots & 0 \\
    \end{pmatrix},
  \end{alignedat}
\end{equation}
where these matrices are of the sizes $k_n\times k_{n+1}$ ($A_n$ and $X_n$) and $k_{n+1}\times k_{n}$ ($B_n$ and $Y_n$).
The model (EAM) is obtained by defining  $C,Z$ as in \eqref{eq6}, but with the central blocks of size $k_n\times k_n$.
It is natural to ask whether the commutator $CZ-ZC$ can yield every compact diagonal operator.
We show that the answer remains: No!

\begin{theorem}
  \label{tEAM}
  If the (EAM) yields every compact diagonal operator as a commutator of compact operators, then so also does the (AM).
\end{theorem}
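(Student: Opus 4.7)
Plan:

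The plan is to convert an (EAM) realization of a suitably ``spread'' compact diagonal $\tilde{D}$ into an (AM) realization of the given compact diagonal $D$. Fix any exponentially growing sequence $\seq{k_n}$ with $k_n \ge n+1$ and set $s_n := k_1 + \cdots + k_n$ and $S_n := n(n+1)/2$. Given $D = \diag(d_1, d_2, \ldots)$, define $\tilde{D}$ by placing the (AM) block $n$ entries of $D$, namely $d_{S_{n-1}+1}, \ldots, d_{S_n}$, in the first $n$ positions of the $n$-th (EAM) block (of size $k_n$), and zeros in the remaining $k_n - n$ positions. The (EAM) hypothesis then supplies (EAM)-form $C, Z$ with $[C, Z] = \tilde{D}$.

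From this (EAM) representation, I would extract (AM)-form operators $C', Z'$ by truncation: $a'_{k,n} := a_{k,n}$, $b'_{k,n} := b_{k,n}$, $x'_{k,n} := x_{k,n}$, $y'_{k,n} := y_{k,n}$ for $1 \le k \le n$. These are compact (dominated in norm by the (EAM) blocks) and inherit the (AM) off-diagonal-vanishing relations $a'_{i,n} x'_{i,n+1} = x'_{i,n} a'_{i+1,n+1}$ and $b'_{i,n+1} y'_{i,n} = y'_{i+1,n+1} b'_{i,n}$ from the corresponding (EAM) relations, which hold for all $1 \le i \le k_n$. A direct computation via the (AM) diagonal-block formulas then shows that $[C', Z']$ matches $D$ at every position $1 \le i \le n$ of (AM) block $n+1$, since the (AM) formula there coincides with the (EAM) formula at (EAM) position $i$ of (EAM) block $n+1$.

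The hard part is the transition position $i = n+1$ of each (AM) block $n+1$: the (AM) formula there yields $-b_{n,n} x_{n,n} + a_{n+1,n+1} y_{n+1,n+1} + x_{n+1,n+1} b_{n+1,n+1}$, whereas the corresponding (EAM) entry at (EAM) block $n+1$ position $n+1$, which by construction equals $\tilde{d}_{s_n+n+1} = d_{S_{n+1}}$, contains an additional term $-y_{n+1,n}\, a_{n+1,n}$ (since $k_n \ge n+1$ puts this position in the two-sided regime of the (EAM) formula). The truncated operators therefore give $d_{S_{n+1}} + y_{n+1,n}\, a_{n+1,n}$ rather than $d_{S_{n+1}}$ at this position. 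To kill this correction, the plan is to exploit the zero padding in $\tilde{D}$: the prescribed zeros $\tilde{d}_{s_{n-1}+i} = 0$ for $n+1 \le i \le k_n$, together with the (EAM) off-diagonal consistency relations, are compatible with the sparse choice $a_{i,n} = b_{i,n} = x_{i,n} = y_{i,n} = 0$ for all $i \ge n+1$ and all $n$; in such a sparse (EAM) representation of $\tilde{D}$ the product $y_{n+1,n}\, a_{n+1,n}$ vanishes identically and the truncation succeeds. The main technical challenge will therefore be to argue — either by directly constructing a sparse solution to the (EAM) system for $\tilde{D}$, or by a gauge-style modification of an arbitrary (EAM) representation using the freedom in the (EAM) off-diagonal relations and the zero pattern of $\tilde{D}$ — that the (EAM) hypothesis can be leveraged to produce a representation of this sparse form, thereby making $[C', Z'] = D$ and finishing the proof.
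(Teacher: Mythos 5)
Your overall route is the same as the paper's: embed the AM-blocked target $D$ as the upper-left corners of an EAM-sized diagonal operator $\tilde D$ (the paper allows arbitrary diagonal entries in the remaining positions; you pad with zeros), invoke the EAM hypothesis to get compact $C,Z$ of the form \eqref{blockseam} with $\comm{C}{Z}=\tilde D$, and then read off AM data from the upper-left corners of the blocks. Your verifications up to the transition entry are correct: the truncated parameters inherit the off-diagonal relations and compactness, and the diagonal entries agree at positions $1\le i\le n$ of AM block $n+1$.

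The proposal nevertheless does not prove the theorem, because the elimination of the cross term $y_{n+1,n}a_{n+1,n}$ is only announced, and neither of the two routes you sketch works as stated. The hypothesis furnishes merely \emph{some} EAM representation of $\tilde D$, with no control over its individual entries; the zero padding constrains only the combinations occurring in the diagonal-block equations (at a padded position one gets $a_{i,n}y_{i,n}+x_{i,n}b_{i,n}-b_{i-1,n-1}x_{i-1,n-1}-y_{i,n-1}a_{i,n-1}=0$), not the individual factors, so it does not force $y_{n+1,n}=0$ or $a_{n+1,n}=0$. ``Directly constructing a sparse solution to the EAM system for $\tilde D$'' is circular: an EAM solution vanishing at all indices $i\ge n+1$ of each block is, after discarding the zero rows and columns, exactly an AM solution for $D$ direct-summed with $0$ --- the very object the theorem is to produce, and one the hypothesis does not hand you. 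And no ``gauge'' transformation is exhibited (nor is one apparent) that preserves the form \eqref{blockseam}, compactness, and the commutator while annihilating $y_{n+1,n}a_{n+1,n}$. For comparison: the paper's proof passes through exactly this point in one line --- after writing the EAM blocks as in \eqref{matrices} with corners $A'_n,X'_n,B'_n,Y'_n$ as in \eqref{eq5}, it asserts that the upper-left corner of $Y_{n-1}A_{n-1}$ in \eqref{eq13} is $Y'_{n-1}A'_{n-1}$, i.e.\ that your cross term is absent. Your computation shows that when $k_{n-1}\ge n$ this corner actually carries the extra entry $y_{n,n-1}a_{n,n-1}$ in its $(n,n)$ position, so you have correctly isolated the one delicate step of the argument; but as written your proposal does not close it, and until that term is genuinely disposed of you have not shown $\comm{C'}{Z'}=D$.
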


\Cref{cor:anderson-model-obstruction} and \Cref{ex:anderson-model-failure} show that the (AM) cannot accomplish this goal  so it follows by contrapositive that neither can the (EAM).

\begin{corollary}
  \label{NOTtEAM} Not all strictly positive diagonal operators can be represented as commutators of compact operators using (EAM).
\end{corollary}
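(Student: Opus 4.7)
The plan is to prove the contrapositive: assuming $(\mathrm{EAM})$ yields every compact diagonal operator, I would deduce that $(\mathrm{AM})$ does as well. The driving idea is an ``initial segment'' extraction that converts an arbitrary $(\mathrm{EAM})$ representation into an $(\mathrm{AM})$ representation of a related diagonal.

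The key structural observation is as follows. Suppose $(\mathrm{EAM})$ with exponentially growing block sizes $\langle k_n\rangle$ represents a compact diagonal $D^{\sharp}$ via parameters $\{a_{l,n},b_{l,n},x_{l,n},y_{l,n}\}_{1\le l\le k_n}$. Because $k_n$ grows at least exponentially, $k_n\ge n$ for all sufficiently large $n$ (and one may arrange this for all $n$). Define restricted values $a^{\mathrm{AM}}_{l,n}:=a_{l,n}$, $b^{\mathrm{AM}}_{l,n}:=b_{l,n}$, $x^{\mathrm{AM}}_{l,n}:=x_{l,n}$, $y^{\mathrm{AM}}_{l,n}:=y_{l,n}$ for $1\le l\le n$. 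The $(\mathrm{AM})$ commutation identities for these restricted parameters coincide with the $(\mathrm{EAM})$ identities for indices $l\le n$, hence are automatically satisfied; and since $\|A^{\mathrm{AM}}_n\|\le\|A_n\|\to 0$ (and similarly for the other blocks), the restricted matrices assemble into compact operators $C^{\mathrm{AM}},Z^{\mathrm{AM}}$ in $(\mathrm{AM})$ form. A direct computation using \eqref{eqD_N} shows that $[C^{\mathrm{AM}},Z^{\mathrm{AM}}]$ is diagonal with $n$-th AM block equal to the first $n$ diagonal entries of the $n$-th EAM block of $D^{\sharp}$. Given a target compact diagonal $\tilde D=\diag\langle\tilde d_j\rangle$ we would apply this with $D^{\sharp}$ built so that its $n$-th EAM block is $\diag(\tilde d_{(n-1)n/2+1},\ldots,\tilde d_{n(n+1)/2},0,\ldots,0)$ (first $n$ entries are the $n$-th AM block of $\tilde D$, followed by $k_n-n$ zeros). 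This $D^{\sharp}$ is compact, the hypothesis supplies an $(\mathrm{EAM})$ representation of it, and the restriction then produces an $(\mathrm{AM})$ representation of $\tilde D$.

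The hard part is reconciling block sizes: the $(\mathrm{EAM})$ hypothesis furnishes \emph{some} exponential block sizes $\langle k'_n\rangle$ for $D^{\sharp}$, which need not agree with the $\langle k_n\rangle$ used in constructing $D^{\sharp}$. My plan to resolve this is a padding lemma: any $(\mathrm{EAM})$ representation with sizes $\langle k'_n\rangle$ can be enlarged to one with any $\langle k''_n\rangle\ge\langle k'_n\rangle$ by appending zero rows and columns to each rectangular shift-like block $A_n,B_n,X_n,Y_n$. One must verify that (i) the enlarged blocks remain in the $(\mathrm{EAM})$ diagonal/shift sparsity pattern, (ii) the $(\mathrm{EAM})$ commutation relations from \eqref{eqD_N} still hold after padding (the added equations are automatic because at least one side contains a padded zero), (iii) compactness is preserved, and (iv) the resulting commutator is $D^{\sharp}$ with extra zeros inserted at block ends in a controlled way. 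Combining this padding with a compatible padding of $D^{\sharp}$ (so the inserted zeros fall in the positions already occupied by our trailing zeros) is what allows us to reduce to the case of matching block sizes and apply the extraction principle. Verifying this padding carefully---particularly that after identification the extracted $(\mathrm{AM})$ commutator is exactly $\tilde D$ and not some rearrangement with spurious entries---is the technical crux that I expect to be the main obstacle.
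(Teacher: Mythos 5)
Your main line is the paper's own argument: the corollary is obtained as the contrapositive of the reduction ``if (EAM) yields every compact diagonal operator then so does (AM)'' (\Cref{tEAM}), proved by building a padded diagonal $D^{\sharp}$ whose $n$-th $k_n\times k_n$ block carries the $n$-th (AM) block of the target in its upper-left corner, invoking the hypothesis, and extracting the upper-left corners of the blocks $A_n,B_n,X_n,Y_n$ (the corner of $Y_{n-1}A_{n-1}$ being the only product needing a second look); the contradiction then comes from \Cref{cor:anderson-model-obstruction} and \Cref{ex:anderson-model-failure}. So your ``initial segment extraction'' is precisely the paper's proof of \Cref{tEAM}, and that part is fine (note also that $k_{n+1}\ge qk_n$ with $q>1$ and $k_n\in\mathbb{N}$ forces $k_n\ge n$ for \emph{all} $n$, so no adjustment is needed there).

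Where you go astray is the ``technical crux.'' In the paper's formulation, (EAM) is a model with a \emph{fixed} block-size sequence $\seq{k_n}$ chosen in advance, and the hypothesis of \Cref{tEAM} is that this model represents every compact diagonal operator; hence the representation of $D^{\sharp}$ automatically comes with the very blocks $k_n\times k_n$ used to construct $D^{\sharp}$, there is no mismatch, and no padding lemma is needed (the corollary is then read per admissible $\seq{k_n}$: each such (EAM) misses some strictly positive diagonal). Moreover, under your stronger existential-over-block-sizes reading, the padding lemma as sketched does not repair the argument: padding a representation with sizes $\seq{k'_n}$ inserts zero diagonal entries at the ends of the $k'_n$-blocks, so the padded commutator is a zero-diluted copy of $D^{\sharp}$, not $D^{\sharp}$ itself, and you cannot arrange the advertised ``compatibility'' because $D^{\sharp}$ must be fixed \emph{before} the hypothesis is invoked, i.e., before $\seq{k'_n}$ is known --- the alignment you need is circular. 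Finally, for the statement as phrased about \emph{strictly positive} operators, pad $D^{\sharp}$ with small strictly positive entries rather than zeros (the corner extraction is insensitive to the trailing entries, which is why the paper leaves them as unspecified $\ast$'s); with zero padding your contrapositive only contradicts ``(EAM) yields every compact diagonal operator,'' not representability of all strictly positive ones.
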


\begin{proof}[Proof of \Cref{tEAM}]
  Suppose that $D'=\diag (D'_n)$ is a compact operator, with each block $D_n'$ being a diagonal $n\times n$ matrix.
  For each $n\in\mathbb{N}$, let $D_n$ be a $k_n\times k_n$ diagonal matrix such that the upper left $n\times n$ block is $D_n'$, i.e.,
  \begin{equation*}
    D_n = \begin{pmatrix} D_n' &0\\0& \ast\end{pmatrix}.
  \end{equation*}
  Of course, if $k_n=n$ then $D_n=D_n'$, and we notice that this can happen only for $k_1 = 1$.
  Since (EAM) by hypothesis can represent $D=\diag (D_n)$ as a commutator of compact operators $C,Z$ as in \eqref{eqD_N}, but with  central blocks of size $k_n\times k_n$, and with blocks satisfying for $n \ge 1$,
  \begin{gather}
    A_nX_{n+1}=X_nA_{n+1}\quad\mbox{ and }\quad B_{n+1}Y_n=Y_{n+1}B_n,\label{offdiag}\\
    D_{n}=A_{n}Y_{n}-X_{n}B_{n}+B_{n-1}X_{n-1}-Y_{n-1}A_{n-1},\label{eq13}
  \end{gather}
  taking $A_0=B_0=X_0=Y_0=0$.
  Furthermore, the blocks in (EAM) are given by \eqref{blockseam} and it is not hard to see that each can be written in the form
  \begin{equation}
    \label{matrices}
    A_n=\begin{pmatrix} A_n' & 0\\ \ast & \ast\end{pmatrix},
    X_n=\begin{pmatrix} X_n' & 0\\ 0 & \ast\end{pmatrix},
    B_n=\begin{pmatrix} B_n' & 0\\ 0 & \ast\end{pmatrix},
    Y_n=\begin{pmatrix} Y_n' & \ast\\ 0 & \ast\end{pmatrix},
  \end{equation}
  where the blocks $A_n',X_n',B_n',Y_n'$ are given by \eqref{eq5}.
  (Once again, if $k_1=1$, then $A_1=A_1'$.)
  Substituting \eqref{matrices} into \eqref{offdiag}--\eqref{eq13} we obtain
  \begin{equation}
    \label{matrixeqs}
    \begin{gathered}
      \begin{pmatrix} A_{n}'X'_{n+1} & \ast\\ \ast & \ast\end{pmatrix}=\begin{pmatrix} X_n'A_{n+1}' & \ast\\ \ast & \ast\end{pmatrix}\quad\mbox{ and }\quad
      \begin{pmatrix} B'_{n+1}Y'_{n} & \ast\\ \ast & \ast\end{pmatrix}=\begin{pmatrix} Y'_{n+1}B'_n & \ast\\ \ast & \ast\end{pmatrix}\\
      \begin{pmatrix} D_n' &0\\0& \ast\end{pmatrix}=\begin{pmatrix} A'_{n}Y'_{n} & \ast\\ \ast & \ast\end{pmatrix}-\begin{pmatrix} X'_{n}B'_{n} & \ast\\ \ast & \ast\end{pmatrix}+\begin{pmatrix} B'_{n-1}X'_{n-1} & \ast\\ \ast & \ast\end{pmatrix}-\begin{pmatrix} Y'_{n-1}A'_{n-1} & \ast\\ \ast & \ast\end{pmatrix}.
    \end{gathered}
  \end{equation}
  The only calculation that may not be immediately obvious concerns the product $Y_{n-1}A_{n-1}$ which is a diagonal matrix, and its upper left corner is indeed $Y'_{n-1}A'_{n-1}$.
  It follows from \eqref{matrixeqs} that the smaller blocks $A'_n,X'_n,B'_n,Y'_n$ satisfy conditions \eqref{offdiag}--\eqref{eq13}.
  Consequently, $D'=\comm{C'}{Z'}$, where $C',Z'$ are of the form \eqref{eq6} but with blocks $A_n',B_n',X_n',Y_n'$.
  In other words $D'$ is a commutator of compact operators given by (AM).
\end{proof}

\section{Universal Block Tridiagonalization and the generalized Anderson model (GAM)}
\label{sec2}

In this section we will consider the problem of general matrix sparsification under a special unitary equivalence.
One approach to the general problem for arbitrary $\BH$-operators is staircase forms given in \cite{Wei-1980-IEOT}, which was an attempt to extend to all operators the well-known tridiagonalization of selfadjoint operators that possess a cyclic vector.
This was essential to the solution in \cite{Wei-1980-IEOT} to \cite[Problems 3 and 3$'$]{PT-1971-MMJ} and led to \cite{PPW-2020-TmloVLOt}, a precursor to this paper.
Some of the material presented in this section appears in \cite{PPW-2020-TmloVLOt} but we include it here for completeness, enhanced and somewhat reformulated but with less detail (\Cref{T1}--\Cref{rem1}).

The following general staircase form \eqref{E0} is obtained in \cite{Wei-1980-IEOT} by considering the free semigroup on the two generators $C,C^*$ with any basis $\seq{e_n}$ to generate a new basis via applying Gram--Schmidt to $e_1$, $Ce_1$, $C^*e_1$, $e_2$, $C^2e_1$, $C^*Ce_1$, $e_3$, $CC^*e_1$, ${C^*}^2e_1$, $e_4$, $Ce_2$, $C^*e_2$, $e_5$, $\dots$, when this list is linearly independent.%
\footnote{%
  When the list is linearly dependent, an induction argument is necessary.
}
This list consists of $\mathcal{W}(C,C^*)$ (all words in $C,C^*$, including the empty word $I$) applied to all elements of $\seq{e_n}$ and arranged in this special order:
first list $e_1$ followed by $C,C^*$ applied to $e_1$ to obtain the first three vectors;
then list $e_2$, followed by $C,C^*$ applied to the second vector in this list, namely $Ce_1$;
and so on.
By a \term{staircase form}, we mean a matrix and two strictly increasing sequences of positive integers $\seq{r_1(n)},\seq{r_2(n)}$, called the \term{column and row support lengths}, for which the matrix entries are zero in the positions $(i,j)$ whenever $i > r_1(j)$ or $j > r_2(i)$.

\begin{theorem}[\protect{\cite[Theorem 2--Corollary 3, combined]{Wei-1980-IEOT}}]
  \label{t1}
  Let $C \in \BH$ be arbitrary and let $\seq{e_n}$ be any orthonormal basis of $\Hil$.
  There is an orthonormal basis $\seq{f_n}$ is derived from $\seq{e_n}$ (via the method in the above paragraph) such that $f_1 = e_1$ and $e_n \in \vee_{k=1}^{3n} f_k$ for all $n \in \mathbb{N}$, and moreover, the matrix of $C$ with respect to $\seq{f_n}$ takes the staircase form
  \begin{equation}
    \label{E0}
    C =
    \begin{pmatrix}
      *&*&*&0&\cdots\\
      *&*&*&*&*&*&0& \cdots\\
      *&*&\cdots\\
      0&*&\\
      0&*&\\
      0&*&\\
      \vdots&0&
    \end{pmatrix},
  \end{equation}
  where this matrix has row and column support lengths $\seq{3n}_{n=1}^{\infty}$.

  In addition, if $S_1, \ldots, S_N$ is any finite collection of selfadjoint operators, then there is an orthonormal basis $\seq{f_n}$ for which $f_1 = e_1$ and $e_n\in\vee_{k=1}^{1+(n-1)(N+1)} f_k$, and with respect to which $S_k$ takes the form \eqref{E0}  with $\seq{3n}_{n=1}^{\infty}$ replaced by $\seq{(N+1)n}_{n=1}^{\infty}$.
  
  When $S_1, \ldots, S_N$ are not necessarily selfadjoint, the row and column support lengths are given by $\seq{(2N+1)n}_{n=1}^{\infty}$, and the inclusion relationships are $e_n\in\vee_{k=1}^{1+(n-1)(2N+1)} f_k$.
\end{theorem}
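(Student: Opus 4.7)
The plan is to carry out the explicit Gram--Schmidt construction described immediately before the statement and then read off the staircase pattern from a single self-referential observation about the list. Enumerate the vectors as $v_1, v_2, \ldots$ according to the prescription: in the $n$-th block of three positions $3n-2, 3n-1, 3n$, place $e_n$, then $Cv_n$, then $C^*v_n$. The defining structural identity is therefore $v_{3n-2} = e_n$, $v_{3n-1} = Cv_n$, and $v_{3n} = C^*v_n$ for every $n \ge 1$. Set $V_k := \spans\{v_1, \ldots, v_k\}$. Applying Gram--Schmidt then produces an orthonormal sequence $\seq{f_n}$ with $f_1 = e_1$ and, provided the list is linearly independent, $V_k = \spans\{f_1, \ldots, f_k\}$ for every $k$.

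The staircase structure now follows from two containments. First, $e_n = v_{3n-2} \in V_{3n-2} \subset V_{3n}$, which gives the inclusion $e_n \in \vee_{k=1}^{3n} f_k$. Second, since $f_j \in V_j$, applying $C$ termwise expresses $Cf_j$ as a linear combination of $Cv_1, \ldots, Cv_j$, each of which equals $v_{3k-1}$ for some $k \le j$; hence $Cf_j \in V_{3j-1} \subset V_{3j}$. Symmetrically $C^*f_j \in V_{3j}$. The $(i,j)$ entry of the matrix of $C$ in the basis $\seq{f_n}$ is $\langle Cf_j, f_i \rangle$, and the first containment forces this to vanish for $i > 3j$ (the column support bound), while $\langle Cf_j, f_i \rangle = \overline{\langle f_j, C^*f_i \rangle}$ combined with the second containment forces vanishing for $j > 3i$ (the row support bound). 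Both support lengths are therefore $\seq{3n}$, as claimed.

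For the multi-operator generalizations the argument is identical modulo the size of each group. For $N$ selfadjoint operators $S_1, \ldots, S_N$, replace each three-vector group by an $(N+1)$-vector group of the form $e_n, S_1 v_n, \ldots, S_N v_n$; since $S_k^* = S_k$, only one containment per operator is needed, and both row and column support lengths become $(N+1)n$. For arbitrary (not necessarily selfadjoint) operators, enlarge each group further to $e_n, S_1 v_n, \ldots, S_N v_n, S_1^* v_n, \ldots, S_N^* v_n$, giving $(2N+1)$-vector groups and support bounds $(2N+1)n$. In each case $e_n$ is by construction the first entry of the $n$-th group (at position $1 + (n-1)(N+1)$ or $1 + (n-1)(2N+1)$), from which the stated inclusion for $e_n$ is immediate.

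The one genuine subtlety, and the step I expect to be the main technical obstacle, arises when the prescribed list fails to be linearly independent at some stage: naive Gram--Schmidt would produce $f_k = 0$, and the identity $V_k = \spans\{f_1, \ldots, f_k\}$ must then be repaired. The standard remedy is inductive: whenever $v_k \in V_{k-1}$, skip $v_k$ and splice in the earliest $e_j$ not yet in the running span, then verify that this reshuffling preserves both the containments $Cv_j, C^*v_j \in V_{3j}$ for every retained $v_j$ and the property that each $e_n$ is enumerated before position $3n$. Because the list is self-referential (later groups are defined in terms of earlier vectors of the \emph{original} list), this bookkeeping requires some care, but the support cutoffs can only improve, never worsen, under such deletions, which is why a careful induction as in \cite{Wei-1980-IEOT} closes the argument.
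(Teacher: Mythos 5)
Your construction is exactly the one the paper (following \cite{Wei-1980-IEOT}) prescribes in the paragraph before the theorem, and your verification in the linearly independent case --- the containments $Cf_j, C^{*}f_j \in \bigvee_{k=1}^{3j} f_k$ giving the column and row support lengths $\seq{3n}$, the inclusion $e_n \in \bigvee_{k=1}^{3n} f_k$, and the analogous group-of-$(N+1)$ and group-of-$(2N+1)$ bookkeeping for the multi-operator cases --- is correct and matches the paper's (cited) argument. The linearly dependent case you flag is precisely what the paper also defers (see its footnote and the induction in \cite{Wei-1980-IEOT}), and the splice-in-the-earliest-unused-$e_j$ remedy you sketch is exactly what is carried out rigorously later in \Cref{thm:arbitrary-staircase-forms} (via \Cref{cor:single-operator-arbitrary-staircase}, e.g.\ with $r_0(n)=3n-2$, $r_1(n)=3n-1$, $r_2(n)=3n$), so that gap closes in just the way you indicate.
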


\begin{definition}
  \label{def4.2}
  We call the $*$-entries in \eqref{E0} the support entries.
  Albeit some can also be zero as we shall see in \Cref{T2,t3aa}.
  So for instance, for $N=1$ we obtained support row and column sizes $\seq{3n}$, which technically have fewer support entries because of the initial zeros (starting for instance with the 4th row and column which has 11 asterisks and not 12), but for simplicity we don't mention those.
\end{definition}

\Cref{t1} gives a striking universal staircase form (with step sequence $\seq{3n}$) for an arbitrary operator and universal simultaneous staircase forms with larger stairs for arbitrary finite collections.
One can slightly strengthen \Cref{t1} to have column lengths $\seq{3n-1}$ (see \cite[Remark 20.3]{PPW-2020-TmloVLOt}).

The staircase form \eqref{E0} will allow us to represent a matrix in universal block tridiagonal form as in \eqref{F} below:
\begin{equation}
  \label{F}
  C =
  \begin{pmatrix}
    C_1 &  A_1  &0  &\dots          \\
    B_1 & C_2&A_2   &\ddots           \\
    0 &B_2 &C_3 &\ddots            \\
    \vdots &\ddots &\ddots  &\ddots
  \end{pmatrix}.
\end{equation}
The following theorem gives canonical dimensions for the blocks in \eqref{F} in order that they cover all the support entries.
And since there will be no change of basis, we retain the \Cref{t1} condition that each $e_n\in \vee_{k=1}^{3n} f_k$.

\begin{theorem}
  \label{T1}
  For all $C \in \BH$, the block tridiagonal partition of the matrix of $C$ induced by \eqref{E0} (i.e.,  in the derived basis $\seq{f_n}$ from \Cref{t1}) is given by \eqref{F},
  where the central blocks $C_n$ have block size sequence $\seq{k_n}$ given by $k_1 = 1$ and $k_n = 2(3^{n-2})$ for $n > 1$.

  More generally, the necessary and sufficient conditions that \eqref{F} cover the \eqref{E0} staircase support entries are: $k_1$ is chosen arbitrarily and $k_{n+1} \ge 2 ( k_1 + k_2 + \dots + k_n )$.
\end{theorem}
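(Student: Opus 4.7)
Let me denote by $s_n := k_1 + k_2 + \cdots + k_n$ the partial sums of the block-size sequence (with $s_0 := 0$), so that the $n$th diagonal block $C_n$ occupies the indices $i,j \in [s_{n-1}+1, s_n]$. The plan is to translate the assertion ``the block tridiagonal partition \eqref{F} covers all staircase support entries of \eqref{E0}'' into an inequality on $\seq{s_n}$, deduce the stated necessary and sufficient condition on $\seq{k_n}$, and finally verify that the specific sequence $k_1 = 1$, $k_n = 2(3^{n-2})$ for $n > 1$ realizes the minimal case.

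First I would spell out the staircase constraint. By \Cref{t1}, the matrix of $C$ with respect to $\seq{f_n}$ has entry $(i,j)$ equal to zero whenever $i > 3j$ or $j > 3i$. For the partition \eqref{F} to ``cover'' the staircase support means precisely that every block $(m,n)$ with $\lvert m-n\rvert \ge 2$ lies entirely inside the forced-zero region. Equivalently, for each pair $m$ and $n = m+2$ (and symmetrically for subdiagonal blocks), every $(i,j)$ with $i \le s_m$ and $j \ge s_{m+1} + 1$ must satisfy $j > 3i$. Blocks further from the diagonal (that is, $\lvert m - n\rvert \ge 3$) will then follow automatically from monotonicity, so they impose no extra condition once the $\lvert m - n\rvert = 2$ constraint is secured.

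Next, I would extract the numerical condition. The worst-case pair inside block $(m, m+2)$ is $(i,j) = (s_m, s_{m+1} + 1)$, for which $j > 3i$ reads $s_{m+1} + 1 > 3s_m$. Since $s_m, s_{m+1} \in \mathbb{N}$, this is equivalent to $s_{m+1} \ge 3 s_m$, that is,
\begin{equation*}
  k_{m+1} \;=\; s_{m+1} - s_m \;\ge\; 2\, s_m \;=\; 2(k_1 + k_2 + \cdots + k_m).
\end{equation*}
The analogous argument with block $(m+2, m)$ below the subdiagonal gives the same inequality, and a short induction shows $s_{m+\ell} \ge 3^\ell s_m$ whenever $s_{m+1} \ge 3 s_m$ holds for all $m$, which handles all blocks with $\lvert m-n\rvert \ge 3$. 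This establishes the second (``more generally'') assertion of the theorem: $k_1$ is free, and $k_{n+1} \ge 2 s_n$ is necessary and sufficient.

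Finally, for the first assertion, I would plug in $k_1 = 1$ and read off the minimal solution of the recurrence $k_{n+1} \ge 2 s_n$ with equality: by induction $s_n = 3^{n-1}$, whence $k_{n+1} = 2 s_n = 2 \cdot 3^{n-1}$ for $n \ge 1$, which agrees with $k_n = 2(3^{n-2})$ for $n > 1$. The choice $k_1 = 1$ is forced by the requirement $f_1 = e_1$ in \Cref{t1}, so the block sizes displayed in the theorem are exactly the minimal canonical partition induced by the staircase. There is no substantive obstacle here beyond careful bookkeeping of the staircase corners; the content of the proof is entirely in correctly identifying that the extremal indices in each off-diagonal block reduce the covering condition to $s_{n+1} \ge 3 s_n$.
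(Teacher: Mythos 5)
Your proposal is correct and follows essentially the same route as the paper: both reduce the covering condition to the partial-sum inequality $s_{n+1} \ge 3s_n$ (equivalently $k_{n+1} \ge 2(k_1+\cdots+k_n)$) via an extremal corner entry, and then obtain $k_1=1$, $k_n = 2\cdot 3^{n-2}$ by taking equality in the recursion. The only cosmetic difference is that you verify the uncovered corner $(s_m, s_{m+1}+1)$ must be non-support, while the paper checks that the support entry $(s_n, 3s_n)$ must be covered---contrapositive forms of the same argument.
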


\begin{proof}
  Let $\seq{s_n}$ denote the sequence of partial sums of the block size sequence $\seq{k_n}$.
  Note that an entry in the $(i,j)$-position of $C$ is a support entry with respect to the basis $\seq{f_n}$ if and only if $i \le 3j$ and $j \le 3i$.
  The recursive condition $k_{n+1} \ge 2 s_n$ is clearly necessary since the $(s_n,3s_n)$-position is a support entry of $C$, and hence $3s_n \le s_{n+1}$ if the block tridiagonal form covers this support entry.

  For the sufficiency, suppose that $k_{n+1} \ge 2s_n$ for all $n \in \mathbb{N}$, so then $3s_n \le s_{n+1}$.
  Suppose the $(i,j)$-position is a support entry of $C$.
  Then there is some $n \in \mathbb{N}$ for which $s_{n-1} < i \le s_n$ (where we let $s_0 = 0$).
  Then $j \le 3 s_n \le s_{n+1}$.
  Similarly, for some $m \in \mathbb{N}$, $s_{m-1} < j \le s_m$ and hence $i \le s_{m+1}$.
  Hence the $(i,j)$-position is covered by the block tridiagonal form \eqref{F}.

  Finally, $k_n = 2(3^{n-2})$ for $n > 1$ is obtained by letting $k_1 = 1$ and using equality in the recursive formula.
\end{proof}

A similar result holds for more than one matrix.
For example, for 3 matrices we have the following consequence of the staircase form for the case $N=3$ of \Cref{t1}.
This is explained further in \Cref{rem1}.
And then a modified argument from the above paragraph shows the blocks cover the support entries of the larger staircase forms.

\begin{theorem}
  \label{t11}
  For any $N$ operators in $\BH$, the block tridiagonal partition of their matrices induced by the analog of \eqref{E0} for $N$ (i.e.,  in the new basis $\seq{f_n}$) is given by \eqref{F}.
  For each of them simultaneously the central blocks have block size sequence $\seq{k_n}$ given by $k_1 = 1$ and $k_n = 2N((2N+1)^{n-2})$ for $n > 1$.

  In particular, for $3$ operators $k_n = 6(7^{n-2})$, and for $2$ operators $k_n = 4(5^{n-2})$ for $n > 1$.
\end{theorem}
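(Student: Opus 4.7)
The plan is to reduce Theorem \ref{t11} to a direct generalization of the argument already used in the proof of Theorem \ref{T1}, replacing the constant $3$ by the constant $2N+1$ that governs the $N$-operator staircase form from Theorem \ref{t1}. First I would invoke the $N$-operator version of Theorem \ref{t1}: given any $N$ operators in $\BH$ and an initial basis $\seq{e_n}$, there is an orthonormal basis $\seq{f_n}$ with $f_1 = e_n$ and $e_n \in \bigvee_{k=1}^{1+(n-1)(2N+1)} f_k$ such that \emph{each} of the $N$ operators, with respect to $\seq{f_n}$, is simultaneously in staircase form with row and column support length sequence $\seq{(2N+1)n}$. Thus an entry in the $(i,j)$-position of any of the $N$ matrices can be nonzero only if $i \le (2N+1)j$ and $j \le (2N+1)i$.

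Next, to pass from the staircase form to a common block tridiagonal form \eqref{F} with central blocks of size $k_n \times k_n$, I would repeat the sufficiency and necessity analysis in the proof of Theorem \ref{T1}, but with the constant $3$ replaced by $2N+1$. Writing $s_n := k_1 + k_2 + \cdots + k_n$, the $(s_n,(2N+1)s_n)$-position is a support entry of a generic matrix, so $(2N+1)s_n \le s_{n+1}$ is necessary; equivalently,
\begin{equation*}
  k_{n+1} \ge 2N\,s_n.
\end{equation*}
Conversely, if this holds and the $(i,j)$-position is a support entry, then choosing $n$ with $s_{n-1} < i \le s_n$ (and $s_0:=0$) gives $j \le (2N+1)s_n \le s_{n+1}$, and symmetrically for $j$, so the entry lies within the block tridiagonal envelope \eqref{F}. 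Because the $N$ matrices share the staircase form, they simultaneously become block tridiagonal with respect to the same partition.

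Finally, I would solve the minimal recursion $k_1 = 1$ and $k_{n+1} = 2N\,s_n$ to obtain the announced values. A quick induction gives $s_n = (2N+1)^{n-1}$, whence
\begin{equation*}
  k_n = s_n - s_{n-1} = (2N+1)^{n-1} - (2N+1)^{n-2} = 2N\,(2N+1)^{n-2} \qquad (n \ge 2),
\end{equation*}
with $k_1 = 1$. Specializing to $N=3$ and $N=2$ yields $k_n = 6(7^{n-2})$ and $k_n = 4(5^{n-2})$ respectively, as claimed. No genuine obstacle arises: the real content is packaged inside the $N$-operator staircase form from Theorem \ref{t1}; the rest is the same bookkeeping used to derive Theorem \ref{T1}, and the only point deserving care is confirming that the sole upgrade needed when passing from one operator to $N$ operators is replacing $3$ by $2N+1$ in both the support-length inequalities and the block-size recursion.
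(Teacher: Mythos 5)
Your proposal is correct and follows essentially the same route the paper sketches: invoke the $N$-operator case of \Cref{t1} to get simultaneous staircase forms with support length sequence $\seq{(2N+1)n}$, then rerun the covering argument from the proof of \Cref{T1} with $3$ replaced by $2N+1$ to obtain the recursion $k_{n+1}\ge 2N\,s_n$, whose minimal solution with $k_1=1$ is $k_n = 2N(2N+1)^{n-2}$ for $n\ge 2$. (Minor slip: you wrote $f_1 = e_n$ where you meant $f_1 = e_1$, but this is clearly a typo.) The paper itself does not write out a formal proof, instead pointing to \Cref{rem1} and ``a modified argument from the above paragraph,'' so your write-up simply fills in the same details the paper leaves implicit.
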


A brief explanation for this theorem is:

\begin{remark}
  \label{rem1}
  We mention that in the case of more than one operator, the orthonormal basis $\seq{f_n}$ of $\Hil$ --- with respect to which their matrices are in the block tridiagonal form of \Cref{t11} --- can be obtained in the following manner as in \cite[Theorem 2--Corollary 3, combined]{Wei-1980-IEOT}, at least when the list of vectors below is linearly independent.
  First we decompose each of the operators as $T=T_1+iT_2$, with $T_1$ and $T_2$ selfadjoint, and then we create a list of vectors.
  For instance, in the case of 3 operators $C=C_1+iC_2$, $Z=Z_1+iZ_2$, and $D=D_1+iD_2$, we use
  \begin{align*}
    &e_1, C_1e_1, C_2 e_1, Z_1e_1, Z_2e_1, D_1e_1,D_2e_1,\\
    &e_2, C_1 g_2, C_2g_2, Z_1g_2, Z_2g_2, D_1g_2,D_2g_2,\\
    &e_3, C_1 g_3, C_2g_3, Z_1g_3, Z_2g_3, D_1g_3,D_2g_3,\quad \mbox{etc.},
  \end{align*}
  where $g_2$ is the second vector, namely $C_1e_1$, $g_3$ is the third vector, namely $C_2e_1$, etc.
  Finally, Gram--Schmidt is applied to this list.
  This yields the staircase matrix analog of \eqref{E0} but with support row and column sizes $\seq{7n}$ replacing $\seq{3n}$.
  After Gram--Schmidt to obtain the analog staircase form, we then partition the staircase matrix into central, upper and lower blocks similarly as in the proof of \Cref{T1} based on the staircase matrix with row and column sizes $\seq{7n}$.
  And similarly for the case $N=2$ with analog staircase matrix row and column sizes $\seq{5n}$.
\end{remark}

\Cref{T1,t11} show that, when considering the equation $CZ-ZC=D$, there is no loss of generality in assuming that any two or all three operators $C,Z,D$ are in the block tridiagonal form  \eqref{F}.
We formulate this equivalence and leave the proof to the reader.

\begin{theorem}
  \label{equivthm}
  The following are equivalent:
  \begin{enumerate}
  \item $T$ is a commutator of compact operators.
  \item $T$ is a block tridiagonal operator of the form \eqref{F} relative to a basis $\seq{f_n}$, derived via \Cref{t1,T1,t11} from any basis $\seq{e_n}$, and $T$ is a commutator of compact matrices of the same block tridiagonal form relative to that same basis (i.e., all with central block sizes  $6 (7^{n-2})$ after $k_1=1$).
  \end{enumerate}
  For diagonal matrices, the following are equivalent:
  \begin{enumerate}[resume]
  \item The diagonal matrix $D$ (relative to a basis $\seq{e_n}$) is a commutator of compact operators.
  \item Relative to the basis $\seq{f_n}$, derived via \Cref{t1,T1,t11} from $\seq{e_n}$, $D$ is a block 5-diagonal operator of the form Equation \eqref{F}, and $D$ is a commutator of compact operators of the same block tridiagonal form relative to that same basis, all with central block sizes  $4 (5^{n-2})$ after $k_1=1$.
  \end{enumerate}
\end{theorem}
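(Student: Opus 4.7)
The plan is to derive both equivalences directly from the simultaneous universal block tridiagonalization established in \Cref{t1,T1,t11}, combined with the observation that compactness and the commutator relation are invariant under unitary change of basis.

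For (i) $\Leftrightarrow$ (ii), the implication (ii) $\Rightarrow$ (i) is essentially tautological: a matrix representation asserting $T=\comm{C}{Z}$ with $C,Z$ compact in some basis $\seq{f_n}$ is already a statement of the basis-independent operator identity (i). For (i) $\Rightarrow$ (ii), starting from $T=\comm{C}{Z}$ with $C,Z$ compact and fixing any basis $\seq{e_n}$, I would apply \Cref{t11} to the three (generally non-selfadjoint) operators $\{T,C,Z\}$, i.e., the case $N=3$. This produces a new basis $\seq{f_n}$ derived from $\seq{e_n}$ in which all three matrices are simultaneously of the block tridiagonal form \eqref{F} with common central block sizes $k_1=1$ and $k_n=6(7^{n-2})$ for $n>1$. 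Since the change of basis $\seq{e_n}\mapsto\seq{f_n}$ is unitary, $C$ and $Z$ remain compact and $T=\comm{C}{Z}$ persists in the new basis.

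For (iii) $\Leftrightarrow$ (iv), the direction (iv) $\Rightarrow$ (iii) follows by the same unitary invariance argument. For (iii) $\Rightarrow$ (iv), suppose $D=\diag\seq{d_n}$ in $\seq{e_n}$ and $D=\comm{C}{Z}$ with $C,Z$ compact. Here it suffices to block-tridiagonalize only the \emph{two} operators $C,Z$. I would therefore apply \Cref{t11} with $N=2$ to the pair $\{C,Z\}$, obtaining a basis $\seq{f_n}$ in which $C$ and $Z$ are simultaneously block tridiagonal with common central block sizes $k_1=1$ and $k_n=4(5^{n-2})$ for $n>1$. The block $5$-diagonal structure of $D$ in $\seq{f_n}$ is then automatic: a product of two block tridiagonal matrices sharing a common block partition is supported on at most five block diagonals, so both $CZ$ and $ZC$—hence also $\comm{C}{Z}=D$—are block pentadiagonal in $\seq{f_n}$. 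Compactness of $C,Z$ is again preserved.

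The proof is ultimately bookkeeping once the correct $N$ is chosen in each equivalence: $N=3$ for the first since $T,C,Z$ must all be sparsified simultaneously, and $N=2$ for the diagonal case since the block pentadiagonal structure of $D$ in $\seq{f_n}$ is forced by the block tridiagonality of $C$ and $Z$ alone. The only conceptual point worth stressing—rather than an obstacle—is recognizing that one does not need to include $D$ in the Gram–Schmidt construction for the second equivalence: the diagonal hypothesis on $D$ in $\seq{e_n}$ plays no role in selecting $\seq{f_n}$, and its pentadiagonal form in the new basis is inherited for free from the matrix-product structure of $\comm{C}{Z}$. All the covering conditions on the block sizes are handled uniformly by the recursion $k_{n+1}\ge 2(k_1+\cdots+k_n)$ from \Cref{T1}, extended to $N$ operators as in \Cref{t11}.
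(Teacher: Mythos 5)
Your proposal is correct and follows exactly the route the paper intends (the paper leaves the proof to the reader after noting that \Cref{T1,t11} let one simultaneously block tridiagonalize two or all three of $C,Z,T$): apply \Cref{t11} with $N=3$ to $\{T,C,Z\}$ for the first equivalence and with $N=2$ to $\{C,Z\}$ for the diagonal case, with unitary invariance of compactness and of the commutator identity handling the trivial directions, and the block $5$-diagonal form of $D$ forced by the product of common-partition block tridiagonal matrices. No gaps; this matches the paper's argument, including the block size sequences $6(7^{n-2})$ and $4(5^{n-2})$.
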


\begin{remark}
  \label{d vs d'}
  For positive operators, \Cref{equivthm} would include the additional sometimes useful inequalities that allow one to regulate the growth rate of the partial traces by selectively choosing various $T$ or $D$ targets.
  For $T$ or $D$ $=CZ-ZC$, one has the partial traces relative to the bases $\seq{e_n},\seq{f_n}$ related via $\sum_1^{\lfloor {n}/{7} \rfloor} (Te_i,e_i) \le \sum_1^n (Tf_i,f_i)$ and $\sum_1^{\lfloor {n}/{5} \rfloor} (De_i,e_i) \le \sum_1^n (Df_i,f_i)$.
  These can be deduced from the $e_n, f_n$ relations in \Cref{t1}, which remain in effect for \Cref{T1,t11}, applied as earlier in this paper to obtain bounds for trace computations of $D_1+\cdots + D_n$.

  Moreover, we have the following quantitative contraints on block norms: \eqref{eq4.7ageneral} and \eqref{eq4.7bgeneral} below, evolving from \eqref{3.11}, on the speed of convergence to $0$ of
  \begin{equation*}
    \norm{A_n}\norm{Y_n} + \norm{X_n}\norm{B_n} \le (\norm{A_n}+\norm{B_n})(\norm{Y_n} + \norm{X_n})
  \end{equation*}
  after block tridiagonalizing $C,Z$ in the basis $\seq{f_n}$.

  Let $D = \diag \seq{d_n}$, $d_n \downarrow 0$ be a diagonal operator in the basis $\seq{e_n}$.
  \Cref{T1,t11} allow us to derive a new basis $\seq{f_n}$ with respect to which $C,Z$ are simultaneously in block tridiagonal form with central block sizes $k_n = 4\cdot 5^{n-2}, k_1=1$:
  \begin{equation}
    \label{TDF4.6}
    C =
    \begin{pmatrix}
      C_1 &  A_1  &0   & \cdots        \\
      B_1 & C_2&A_2  &  \ddots            \\
      0 &B_2 &C_3 & \ddots             \\
      \vdots &\ddots &\ddots  &\ddots  \\
    \end{pmatrix}
    \quad and \quad
    Z =
    \begin{pmatrix}
      Z_1 & X _1  &0   & \cdots        \\
      Y_1 & Z_2&X_2  &  \ddots            \\
      0 &Y_2 &Z_3 & \ddots             \\
      \vdots &\ddots &\ddots  &\ddots  \\
    \end{pmatrix}.
  \end{equation}
  The diagonal entries $\seq{d_n'}$ of $D$ in the basis $\seq{f_n}$ are given by $d_n' = (Df_n,f_n)$.
  Let $D_n$ be the $n$-th diagonal block of $D = CZ-ZC$ with respect to the block 5-diagonal form on $D$ induced by \eqref{TDF4.6}.
  The inclusion relations governing $e_n,f_n$, along with the techniques from \Cref{sec3,sec3a}, guarantee
  % \begin{equation}\label{eq4.7general}
  \begin{align*}
    d_1 + d_2 + \cdots + d_{\lfloor ({k_1 + \cdots + k_n})/{5}\rfloor} &\le d_1' + d_2' + \cdots + d_{k_1 + \cdots + k_n}' = \trace(D_1+D_2+\dots+D_n)\\
                                                                       &=\trace(A_nY_n - X_nB_n) \le \norm{A_nY_n - X_nB_n}_1 \quad \text{(then applying H\"{o}lder)} \\
                                                                       &\le k_n(\norm{A_n}\norm{Y_n} + \norm{X_n}\norm{B_n}) \le k_n(\norm{A_n}+\norm{B_n})(\norm{Y_n} + \norm{X_n}).
  \end{align*}

  Similarly, for diagonal $D = \diag \seq{d_n}$, $d_n \downarrow 0$ in the basis $\seq{e_n}$, its derived basis $\seq{f_n}$ (via \Cref{t11}, $N=3$) simultaneously puts all three $D,C,Z$ in block tridiagonal form of central block sizes $k_n = 6\cdot 7^{n-2}, k_1=1$.
  Then
  \begin{align*}
    d_1 + d_2 + \cdots + d_{\lfloor ({k_1 + \cdots + k_n})/{7}\rfloor} &\le d_1' + d_2' + \cdots + d_{k_1 + \cdots + k_n}' = \trace(D_1+D_2+\dots+D_n)\\
                                                                       &=\trace(A_nY_n - X_nB_n) \le \norm{A_nY_n - X_nB_n}_1 \quad \text{(then applying H\"{o}lder)} \\
                                                                       &\le k_n(\norm{A_n}\norm{Y_n} + \norm{X_n}\norm{B_n}) \le k_n(\norm{A_n}+\norm{B_n})(\norm{Y_n} + \norm{X_n}).
  \end{align*}
  % \end{equation}
  From these we obtain the respective quantitative constraints on the speed of convergence to $0$ of
  \begin{equation*}
    \norm{A_n}\norm{Y_n} + \norm{X_n}\norm{B_n} \le (\norm{A_n}+\norm{B_n})(\norm{Y_n} + \norm{X_n}):
  \end{equation*}
  For $N=2, i.e., C,Z$, we have
  \begin{equation}
    \label{eq4.7ageneral}
    \frac{1}{k_n} (d_1 + d_2 + \cdots + d_{\lfloor ({k_1 + \cdots + k_n})/{5}\rfloor}) \le \norm{A_n}\norm{Y_n} + \norm{X_n}\norm{B_n} \le (\norm{A_n}+\norm{B_n})(\norm{Y_n} + \norm{X_n})
  \end{equation}
  and for $N=3, i.e., D,C,Z$, we have
  \begin{equation}
    \label{eq4.7bgeneral}
    \frac{1}{k_n} (d_1 + d_2 + \cdots + d_{\lfloor ({k_1 + \cdots + k_n})/{7}\rfloor}) \le \norm{A_n}\norm{Y_n} + \norm{X_n}\norm{B_n} \le (\norm{A_n}+\norm{B_n})(\norm{Y_n} + \norm{X_n}).
  \end{equation}
  These left-hand ratio sequences (in these exponential cases are related to averages) decreases to $0$ as discussed in \Cref{rem1}.
  And so this lower bound provides quantitative constraints on the speed of tending to $0$ of the right-hand sides, which offers information on the norms of the finite blocks in the block tridiagonalizations of $C,Z$.
\end{remark}

\subsection*{Generalized Anderson's model (GAM): full finite blocks}

\Cref{equivthm}(c)--(d) indicates that allowing full block tridiagonal operators (as opposed to the more restricted forms of (EAM) in \eqref{blockseam}) is in a sense necessary and sufficient to solve the compact commutator problem with one exception.
Namely, that the target operator $D$, after the required basis change, is not in block tridiagonal form (though because $C,Z$ are simultaneously in block tridiagonal form with same compatible block sizes, $D$ is in block 5-diagonal form).
In contrast, in the beginning in \cite{And-1977-JRAM} Anderson directly forced a matrix solution for his commutator problem for a simple fixed rank one projection ($p_{11}=1, p_{ij} = 0$ elsewhere) rather than effecting a change of basis.
We do not know whether this strategy always works, so we leave it as one of the essential open questions in the subject:

\begin{problem}
  Is the change of basis in \Cref{t11}(b) necessary when $T$ is already block tridiagonal relative to the basis $\seq{e_n}$?
  That is, are the conditions in \Cref{t11} equivalent to: Every compact operator in our block tridiagonal form  \eqref{F} is a commutator of compact operators of the same form relative to the same basis, all 3 with central block sizes  $6 (7^{n-2})$.
  This requires solving the algebraic system of equations expanded from display \eqref{eqD_N} to compute commutators $T = CZ-ZC$ of the matrices simultaneously of the form \eqref{F}.
  Let
  \begin{equation}
    \label{TDF4.8}
    T = \begin{pmatrix}
      D_1 &  E_1  &0   & \cdots        \\
      F_1 & D_2&E_2  &  \ddots            \\
      0 &F_2 &D_3 & \ddots             \\
      \vdots &\ddots &\ddots  &\ddots  \\
    \end{pmatrix}, \quad
    C = \begin{pmatrix}
      C_1 &  A_1  &0   & \cdots        \\
      B_1 & C_2&A_2  &  \ddots            \\
      0 &B_2 &C_3 & \ddots             \\
      \vdots &\ddots &\ddots  &\ddots  \\
    \end{pmatrix} \quad and \quad
    Z = \begin{pmatrix}
      Z_1 & X _1  &0   & \cdots        \\
      Y_1 & Z_2&X_2  &  \ddots            \\
      0 &Y_2 &Z_3 & \ddots             \\
      \vdots &\ddots &\ddots  &\ddots  \\
    \end{pmatrix}
  \end{equation}
  with the central blocks $D_n, C_n, Z_n$ of size $k_n\times k_n$ ($k_n$ as in \Cref{t11} for $N=3$).
  Recall that the target operator $T$ is compact if and only if its three sequences of diagonal (main, upper and lower) finite blocks converge in norm to 0 (see for instance \Cref{thm:bandable} and recall that compact operators are those which take weakly convergent sequences of its Hilbert space vectors to norm convergent sequences).
  So Anderson's approach becomes:
  to solve the Pearcy--Topping problem it suffices to solve the system of matrix equations for $C,Z$ with their sequences of (main, upper and lower) diagonal blocks also converging in norm to 0.
  For the record, this expanded algebraic system is:
  \begin{equation}
    \label{eqProblem4.8}
    \begin{gathered}
      D_1=A_{1}Y_{1}-X_{1}B_{1} + \comm{C_1}{Z_1}, \quad \text{and for } n \ge 1,\\
      D_{n+1}=B_nX_n-Y_nA_n+A_{n+1}Y_{n+1}-X_{n+1}B_{n+1}+ \comm{C_{n+1}}{Z_{n+1}},\\
      A_nX_{n+1}=X_nA_{n+1}\quad\mbox{ and }\quad B_{n+1}Y_n=Y_{n+1}B_n, \\
      E_n=C_{n}X_{n}-Z_{n}A_{n} + A_nZ_{n+1}-X_nC_{n+1},\\
      F_n=B_nZ_n-Y_nC_n+C_{n+1}Y_n-Z_{n+1}B_n.
    \end{gathered}
  \end{equation}
  Observe that even in this generality the important relation in \eqref{eq:telescoping-partial-trace} and used often above is preserved:
  \begin{equation*}
    \trace(D_1+D_2+\dots+D_n)=\trace(A_nY_n - X_nB_n).
  \end{equation*}
  Also, when $T$ is simply a diagonal matrix $D$, importantly this system is much simpler because each $E_n, F_n$ is the zero matrix of the corresponding size, and all $D_n$ are square diagonal matrices.
  Thus this system has a better chance of being solvable or leading to counterexample obstructions.
\end{problem}

\subsection*{Further sparsification for block tridiagonal forms}

It is natural to ask whether \Cref{T1} can be further improved, i.e., can more zeros be obtained in the blocks.
Here, we are attempting to preserve the structure and block sizes as in  \Cref{T1}, while ensuring that some  additional entries in these blocks are universally zero.
\Cref{T2,t3aa}, whose proofs can be found in \cite{PPW-2020-TmloVLOt}, present some progress in this direction.

\begin{theorem}[\protect{\cite[Theorem 20.7]{PPW-2020-TmloVLOt}}]
  \label{T2}
  Every block tridiagonal matrix with diagonal block sizes $\seq{k_n}$ nondecreasing is unitarily equivalent to a block tridiagonal matrix with the same block sizes but also with $A_n$ of the form $(A'_n \mid 0)$ with $A'_n$ a positive square matrix.
  Alternatively, the same but with $B_n$ of the form $(B'_n \mid 0)^T$ with $B'_n$ a positive square matrix.
\end{theorem}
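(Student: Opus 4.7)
The plan is to construct a block diagonal unitary $U = \bigoplus_n U_n$ with $U_n$ unitary of size $k_n \times k_n$. Since $U$ has the same block structure as the given block tridiagonal partition, conjugation $U^{*}(\cdot)U$ preserves the block tridiagonal form and all block sizes, and transforms each off-diagonal block via $A_n \mapsto U_n^{*} A_n U_{n+1}$ (and $B_n \mapsto U_{n+1}^{*} B_n U_n$). It therefore suffices to choose $\seq{U_n}$ so that each $U_n^{*} A_n U_{n+1}$ takes the form $(A'_n \mid 0)$ with $A'_n$ positive semidefinite of size $k_n \times k_n$.

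I would build $\seq{U_n}$ inductively. Set $U_1 = I_{k_1}$. Given $U_n$, take a singular value decomposition $A_n = V_n \Sigma_n W_n^{*}$ with $V_n$ unitary of size $k_n$, $W_n$ unitary of size $k_{n+1}$, and $\Sigma_n = (S_n \mid 0)$, where $S_n$ is diagonal $k_n \times k_n$ with nonnegative entries (the zero block has $k_{n+1} - k_n \ge 0$ columns, which is where the hypothesis that $\seq{k_n}$ is nondecreasing enters). Set $R_n := V_n^{*} U_n$, a unitary of size $k_n$, and define
\begin{equation*}
U_{n+1} := W_n \begin{pmatrix} R_n & 0 \\ 0 & I_{k_{n+1}-k_n} \end{pmatrix}.
\end{equation*}
A direct block computation, using $U_n^{*} V_n = R_n^{*}$, yields
\begin{equation*}
U_n^{*} A_n U_{n+1} = R_n^{*}\,(S_n R_n \mid 0) = (R_n^{*} S_n R_n \mid 0),
\end{equation*}
and the leading block $A'_n := R_n^{*} S_n R_n$ is a unitary congruence of $S_n \ge 0$, hence positive semidefinite of size $k_n \times k_n$, as required.

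The alternative form for $B_n$ follows by duality: $C^{*}$ is block tridiagonal with the same nondecreasing block sizes $\seq{k_n}$ but with the roles of $A_n$ and $B_n^{*}$ interchanged, so applying the construction above to $C^{*}$ produces a block diagonal unitary under which each $B_n^{*}$ becomes $(B'_n \mid 0)$ with $B'_n$ positive semidefinite, equivalent to $B_n$ having the form $(B'_n \mid 0)^{T}$. The only real subtlety is bookkeeping: once $U_n$ is fixed, the requirement on $A_n$ essentially determines $U_{n+1}$ (the residual freedom being a unitary acting only on the ``extra'' coordinates coming from $k_{n+1}-k_n$), so one cannot expect to normalize $A_n$ and $B_n$ simultaneously by this procedure, which is precisely why the theorem offers the two forms as alternatives rather than jointly.
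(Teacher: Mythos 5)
Your proof is correct: the inductively defined block diagonal unitary $U=\bigoplus U_n$ with $U_{n+1}$ built from the singular value decomposition of $A_n$ (using $k_n\le k_{n+1}$ to write $\Sigma_n=(S_n\mid 0)$) does yield $U_n^{*}A_nU_{n+1}=(R_n^{*}S_nR_n\mid 0)$ with a positive (semidefinite) leading block, and the $B_n$ alternative follows by passing to $C^{*}$ exactly as you say. Note that the paper does not reprove this statement but defers to \cite[Theorem 20.7]{PPW-2020-TmloVLOt}; your argument is the standard one for that result (a block diagonal unitary combined with SVD/polar decomposition of the off-diagonal blocks applied inductively), so it is essentially the intended proof, and your closing remark about why $A_n$ and $B_n$ cannot in general be normalized simultaneously correctly explains why the theorem states the two forms as alternatives.
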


The next theorem \cite[Theorem 20.8]{PPW-2020-TmloVLOt}, is a further substantial sparsification of our canonical block tridiagonal \Cref{T1} case (i.e., with central block sizes $k_1=1, k_n = 2\cdot 3^{n-2}$).
However there is a subtle gap in the proof appearing in \cite{PPW-2020-TmloVLOt}, namely, the linearly dependent case at the end.
(See description just below.)
So here we give a different proof modeled after the proof of \cite[Theorem 2]{Wei-1980-IEOT}.
It is more complex than \cite[Theorem 20.8]{PPW-2020-TmloVLOt} so we first give a motivation.

Our study of block tridiagonal forms began with a study of staircase forms (see \Cref{t1} and preceding paragraph) via the sequence $e_1$, $Ce_1$, $C^*e_1$, $e_2$, $C^2e_1$, $C^*Ce_1$, $e_3$, $CC^*e_1$, ${C^*}^2e_1$, $e_4$, $Ce_2$, $C^*e_2$, $e_5$, $\dots$.
This sequence generates the staircase form \eqref{E0} when it is linearly independent, and it takes an induction proof to handle the retention of the form in the linearly dependent case.
The core idea of the next theorem is to redistribute the vectors $\seq{e_n}$ from arithmetically distributed (every third step) to geometrically distributed (every $3^{n-1}, n \ge 1$ step).
And again the difficult part, and where the error occurred, is in the linearly dependent case, which is naturally circumvented in the current proof.

We require the following simple lemma concerning the Gram--Schmidt process.

\begin{lemma}
  \label{lem:key-collapsing-lemma}
  Suppose that $\seq{g_n}$ is a linearly independent set such that there is an increasing sequence of positive integers $m_n$ and an operator $C$ such that
  \begin{equation*}
    C g_n \in \bigvee_{j=1}^{m_n} g_j \quad\text{for all } n \in \mathbb{N}.
  \end{equation*}
  Then if $\seq{f_n}$ is the orthonormal set obtained by applying the Gram--Schmidt process to $\seq{g_n}$, then
  \begin{equation*}
    C f_n \in \bigvee_{j=1}^{m_n} f_j \quad\text{for all } n \in \mathbb{N}.
  \end{equation*}
  Consequently, the matrix representation for $C$ relative to the orthonormal basis $\seq{f_n}$ has the property that, for the $n$-th column, the length of the support entries is at most $m_n$.
\end{lemma}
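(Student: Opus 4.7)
The plan is to exploit the standard feature of the Gram--Schmidt process: for every $n \in \mathbb{N}$,
\begin{equation*}
\bigvee_{j=1}^{n} g_j = \bigvee_{j=1}^{n} f_j.
\end{equation*}
This flag-preservation is really all that is needed, combined with the monotonicity of $\seq{m_n}$.

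First I would write $f_n \in \bigvee_{j=1}^{n} g_j$, so there are scalars $\alpha_1,\dots,\alpha_n$ with $f_n = \sum_{j=1}^{n} \alpha_j g_j$. Applying $C$ and using the hypothesis yields
\begin{equation*}
C f_n = \sum_{j=1}^{n} \alpha_j \, C g_j \in \bigvee_{j=1}^{n} \bigvee_{k=1}^{m_j} g_k = \bigvee_{k=1}^{m_n} g_k,
\end{equation*}
where the last equality uses that $\seq{m_n}$ is increasing, so $m_j \le m_n$ for all $j \le n$ (and $n \le m_n$, since the $m_n$ are positive integers with $m_n \ge m_1 \ge 1$ and strictly increasing, one has $m_n \ge n$). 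Then invoking the Gram--Schmidt identity with index $m_n$ gives
\begin{equation*}
C f_n \in \bigvee_{k=1}^{m_n} g_k = \bigvee_{k=1}^{m_n} f_k,
\end{equation*}
which is the desired inclusion.

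For the consequence about the matrix representation, expanding $Cf_n$ in the orthonormal basis $\seq{f_k}$ and using the inclusion above yields $(Cf_n, f_i) = 0$ whenever $i > m_n$. Hence the support entries in the $n$-th column of the matrix of $C$ relative to $\seq{f_n}$ occur only in rows $1, 2, \ldots, m_n$, so the column support length is at most $m_n$.

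I do not anticipate any real obstacle here: the statement is essentially a bookkeeping consequence of the triangular change-of-basis produced by Gram--Schmidt together with the monotonicity of $m_n$. The only minor point to be careful about is that one is applying Gram--Schmidt to the \emph{entire} sequence $\seq{g_n}$ (so the correspondence $\bigvee_{j=1}^{N} g_j = \bigvee_{j=1}^{N} f_j$ is available for every $N$, in particular for $N = m_n$), which is valid precisely because $\seq{g_n}$ is assumed linearly independent.
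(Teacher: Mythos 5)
Your proof is correct and follows essentially the same route as the paper's: both rely on the flag-preserving property $\bigvee_{j=1}^{N} g_j = \bigvee_{j=1}^{N} f_j$ of Gram--Schmidt (valid for all $N$ because $\seq{g_n}$ is linearly independent) together with the monotonicity of $\seq{m_n}$ to collapse $\bigvee_{j\le n}\bigvee_{k\le m_j} g_k$ into $\bigvee_{k\le m_n} g_k$. The aside that $m_n \ge n$ is not actually needed for the argument, but it does no harm.
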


\begin{proof}
  Since $\seq{g_n}$ is a linearly independent set, the Gram--Schmidt process guarantees that $\bigvee_{j=1}^n f_j = \bigvee_{j=1}^n g_j$ for all $n$.
  Note that for all $n \in \mathbb{N}$, by hypothesis and the linearity of $C$,
  \begin{equation*}
    C \left( \bigvee_{j=1}^n g_j \right) = \bigvee_{j=1}^n Cg_j \subseteq \bigvee_{j=1}^n \left(\bigvee_{j=1}^{m_j} g_j \right) = \bigvee_{j=1}^{m_n} g_j,
  \end{equation*}
  where the latter equality holds since $m_n$ is an increasing sequence.
  Consequently,
  \begin{equation*}
    Cf_n \in C \left( \bigvee_{j=1}^n f_j \right) = C \left( \bigvee_{j=1}^n g_j \right) \subseteq \bigvee_{j=1}^{m_n} g_j = \bigvee_{j=1}^{m_n} f_j. \qedhere
  \end{equation*}
\end{proof}

\begin{theorem}
  \label{thm:arbitrary-staircase-forms}
  Let $\seq{r_k}_{k=0}^m$ be a countable (finite or infinite) collection of strictly increasing functions whose ranges together constitute a partition of $\mathbb{N}$, and let $r_0(1) = 1$.
  Let $\seq{T_k}_{k=1}^m \in \BH$ and let $\seq{e_n}$ be a spanning set for $\Hil$.

  Then there is a basis $\seq{f_n}$ derived from these data satisfying the following properties:
  \begin{enumerate}
  \item for all $n \in \mathbb{N}$, $e_n \in \bigvee_{j=1}^{r_0(n)} f_j$;
  \item for all $n \in \mathbb{N}$, and $1 \le k \le m$, $T_k f_n \in \bigvee_{j=1}^{r_k(n)} f_j$.
  \end{enumerate}
  Consequently, the sequence of column support lengths of $T_k$ in the basis $\seq{f_n}$ is given by $\seq{r_k(n)}_{n=1}^{\infty}$.

  Moreover, if $\seq{e_n}$ is an orthonormal basis and each $T_k$ is upper triangular with respect to this basis, then $e_n = f_n$ for all $n$.
\end{theorem}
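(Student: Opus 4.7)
The plan is to construct the basis $\seq{f_n}$ greedily one vector at a time, using the partition hypothesis to ensure that exactly one support-length condition is imposed at each step. First note that since the ranges of the $r_k$ partition $\mathbb{N}$ with $r_0(1) = 1$, we must have $r_k(1) \geq 2$ for every $k \geq 1$; strict monotonicity of each $r_k$ then yields $r_k(n) > n$ for all $k \geq 1$ and all $n$. In particular, whenever $N+1$ lies in the range of some $r_k$ with $k \geq 1$, say $N+1 = r_k(n)$, then automatically $n \leq N$, so $f_n$ will already be available by the time it is needed.

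Inductively, suppose $f_1, \ldots, f_N$ have been chosen orthonormal and set $F_N := \bigvee_{j=1}^N f_j$. Since the ranges partition $\mathbb{N}$, exactly one of the following holds: either $N+1 = r_0(n)$ for a unique $n$, in which case put $g_{N+1} := e_n$; or $N+1 = r_k(n)$ for unique $k \geq 1$ and $n \leq N$, in which case put $g_{N+1} := T_k f_n$. If $g_{N+1} \notin F_N$, let $f_{N+1}$ be its Gram--Schmidt orthonormalization against $F_N$; otherwise, let $f_{N+1}$ be the orthonormalization of the first $e_m$ not lying in $F_N$ (such an $m$ exists since $F_N$ is finite-dimensional while $\seq{e_n}$ spans the infinite-dimensional $\Hil$). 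In either branch $g_{N+1} \in F_{N+1}$, which immediately produces conditions (i) and (ii): $e_n = g_{r_0(n)} \in F_{r_0(n)}$ and $T_k f_n = g_{r_k(n)} \in F_{r_k(n)}$. The resulting $\seq{f_n}$ is orthonormal by construction, and is a basis because $\bigcup_N F_N$ contains every $e_n$ by (i) and the $\seq{e_n}$ span $\Hil$.

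For the moreover clause, assume $\seq{e_n}$ is an orthonormal basis and each $T_k$ is upper triangular with respect to it. I would show by induction on $N$ that the construction yields $f_{N+1} = e_{N+1}$. In the sub-case $N+1 = r_0(N+1)$ (i.e., $n = N+1$ in the $r_0$ case), $g_{N+1} = e_{N+1} \notin F_N$ by orthonormality, and Gram--Schmidt against $F_N = \bigvee\{e_1,\ldots,e_N\}$ returns $e_{N+1}$ itself. In every other sub-case, $g_{N+1} \in F_N$: either $g_{N+1} = e_n$ for some $n \leq N$ (so $e_n = f_n \in F_N$ by induction), or $g_{N+1} = T_k e_n$ for some $k \geq 1$ and $n \leq N$, which lies in $\bigvee_{j=1}^n e_j \subseteq F_N$ by upper-triangularity. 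In this sub-case the ``dependent'' branch picks the first $e_m \notin F_N$, which is precisely $e_{N+1}$.

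The step I expect to require the most care is the choice to define $g_{N+1} := T_k f_n$ in the case $k \geq 1$, rather than the seemingly more natural $g_{N+1} := T_k g_n$. The latter choice is exactly what causes the subtle gap that the paper's footnote warns about concerning the linearly dependent case of \cite[Theorem~2]{Wei-1980-IEOT}: if $g_n$ was replaced at an earlier stage (because of a linear dependence with earlier $g$'s) by a vector unrelated to the original $e$- and $T$-data, then $T_k g_n$ need not have any predictable location inside $F_{r_k(n)}$. Using $T_k f_n$ circumvents this entirely, because $f_n$ is the vector actually added to the basis, and condition (ii) is a statement precisely about $T_k f_n$, so the induction closes cleanly regardless of any linear dependencies that arose earlier.
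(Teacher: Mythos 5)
Your construction is correct, and it proves the theorem by the same overall greedy strategy as the paper — stepping through $\mathbb{N}$, letting the partition $\{r_k(\mathbb{N})\}$ dictate whether the next candidate vector is an $e_n$ or a $T_k$-image, and falling back to the first unused $e_m$ whenever the candidate is linearly dependent on what came before — but with one genuine mechanical difference. The paper first builds a merely linearly independent list $\seq{g_n}$, feeding in $T_k g_{r_k^{-1}(n)}$ (the image of the \emph{raw} vector), proves the support conditions for the $g$'s, orthonormalizes only at the end, and then needs \Cref{lem:key-collapsing-lemma} (together with monotonicity of $r_k$) to transfer the containments $T_k g_n \in \bigvee_{j=1}^{r_k(n)} g_j$ to the orthonormal basis $\seq{f_n}$. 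You instead interleave Gram--Schmidt with the construction and feed in $T_k f_n$, the image of the already-orthonormalized vector, so condition (ii) is literally what the construction enforces at step $r_k(n)$ and no collapsing lemma is required; conditions (i), completeness of $\seq{f_n}$, and the upper-triangular ``moreover'' induction then go through exactly as in the paper. The trade-off is minor: the paper's two-stage version isolates the Gram--Schmidt bookkeeping in a reusable lemma, while yours is slightly shorter and more self-contained. One small correction to your closing commentary: using images of the raw vectors is \emph{not} what caused the gap in \cite[Theorem 20.8]{PPW-2020-TmloVLOt} — the paper's corrected proof does use $T_k g_{r_k^{-1}(n)}$ and is sound, because the step at index $r_k(n)$ forces $T_k g_n \in \bigvee_{j=1}^{r_k(n)} g_j$ whether or not earlier candidates were replaced; the earlier gap concerned the handling of the linearly dependent case itself, which both your fallback and the paper's fallback repair.
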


\begin{proof}
  First note that for each $l \in \mathbb{N}$, $l \le r_0(l)$ since $r_0$ is strictly increasing.
  Moreover, for each $1 \le k \le m$, since $r_k$ is strictly increasing, $r_0(1) = 1$ and the ranges are disjoint, $l < r_k(l)$.

  We construct a linearly independent spanning set $\seq{g_n}$ inductively as follows.
  Set $g_1 := e_1 = e_{r^{-1}_0(1)}$.
  For $n > 1$, there is a unique $0 \le k \le m$ for which $n \in r_k(\mathbb{N})$.
  If $k = 0$, define $g_n$ to be the first vector in the list:
  \begin{equation}
    \label{eq:r0-gn-def}
    \{ e_{r_0^{-1}(n)}, e_{r_0^{-1}(n) + 1}, \ldots \} \setminus \bigvee_{j=1}^{n-1} g_j.
  \end{equation}
  Otherwise, if $1 \le k \le m$ define $g_n$ to be the first vector in the list:
  \begin{equation}
    \label{eq:rk-gn-def}
    \{ T_k g_{r_k^{-1}(n)}, e_2, e_3, \ldots \} \setminus \bigvee_{j=1}^{n-1} g_j.
  \end{equation}
  Note that $g_{r_k^{-1}(n)}$ is defined previously since $r_k^{-1}(n) < r_k(r_k^{-1}(n)) = n$.
  Moreover, in each case this list is nonempty since $\seq{e_n}$ is a spanning set for $\Hil$ and the span $\bigvee_{j=1}^{n-1} g_j$ is finite dimensional.
  Also, the sequence so constructed is linearly independent by design.

  Therefore, by construction, for each $n \in \mathbb{N}$, and for all $1 \le k \le m$
  \begin{equation*}
    e_n = e_{r_0^{-1}(r_0(n))} \in \bigvee_{j=1}^{r_0(n)} g_j
    \quad\text{and}\quad
    T_k g_n = T_k g_{r_k^{-1}(r_k(n))} \in \bigvee_{j=1}^{r_k(n)} g_j.
  \end{equation*}
  The first of these conditions guarantees that $\seq{g_n}$ spans $\Hil$ since $\seq{e_n}$ does, and hence applying Gram--Schmidt to $\seq{g_n}$ yields an orthonormal basis $\seq{f_n}$.
  By \Cref{lem:key-collapsing-lemma} and the fact that $\bigvee_{j=1}^n f_j = \bigvee_{j=1}^n g_j$ for all $n$, we may replace $\seq{g_n}$ with $\seq{f_n}$ everywhere in the previous display.

  Finally, if each $T_k$ is upper triangular relative to an orthonormal basis $\seq{e_n}$ one can show that $g_n = e_n$ for every $n$, and hence $f_n = e_n$ also.
  Indeed, it is a straightforward induction.
  Note that $g_1 = e_1$.
  Let $n > 1$ and suppose that $g_j = e_j$ for all $1 \le j < n$.
  When considering the definition of $g_n$, there are two cases depending on whether $n \in r_0(\mathbb{N})$ or $n \in r_k(\mathbb{N})$ for some $1 \le k \le m$.
  In the former case, $g_n$ is defined as the first vector in the list \eqref{eq:r0-gn-def}, which is clearly $e_n$ since $r_0^{-1}(n) \le n$ and using the inductive hypothesis.
  In the latter case, $g_n$ is defined as the first vector in the list \eqref{eq:rk-gn-def}.
  Note that by the inductive hypothesis and the fact that each $T_k$ is upper triangular with respect to the basis $\seq{e_l}_{l=1}^{\infty}$, and since $r_k^{-1}(n) < n$, 
  \begin{equation*}
    T_k g_{r_k^{-1}(n)} = T_k e_{r_k^{-1}(n)} \subseteq \bigvee_{j=1}^{r_k^{-1}(n)} e_j \subseteq \bigvee_{j=1}^{n-1} e_j = \bigvee_{j=1}^{n-1} g_j.
  \end{equation*}
  Hence, this first vector is removed from the list \eqref{eq:rk-gn-def}, so $g_n = e_n$.
\end{proof}

\begin{remark}
  \label{rem:countable-simultaneous-finite-support-lengths}
  Note that the previous theorem has a curious consequence.
  For any countably infinite collection of operators $\{T_k\}_{k=1}^{\infty}$, there is an orthonormal basis $\seq{f_n}$ relative to which the matrix representation of each $T_k$ has finite row and column support lengths (albeit not the same lengths for each operator).
  Indeed, simply partition $\mathbb{N}$ into an infinite collection $\{A_k\}_{k=0}^{\infty}$ of infinite sets with $1 \in A_0$.
  Then notice that each set in the partition induces a unique strictly increasing function $r_k : \mathbb{N} \to \mathbb{N}$ with $r_k(\mathbb{N}) = A_k$.
  Finally, apply \Cref{thm:arbitrary-staircase-forms} to the sequence $\seq{T'_k}_{k=1}^{\infty}$, where $T'_{2k-1} = T_k$ and $T'_{2k} = T_k^{*}$.
\end{remark}

\begin{corollary}
  \label{cor:single-operator-arbitrary-staircase}
  Let $r_0,r_1,r_2 : \mathbb{N} \to \mathbb{N}$ be strictly increasing functions whose ranges partition $\mathbb{N}$, and let $r_0(1) = 1$.
  Let $T \in \BH$ be arbitrary, and let $\seq{e_n}$ be any spanning set.
  Then there is an orthonormal basis $\seq{f_n}$ derived from these data relative to which sequences of column and row support lengths of $T$ are given by $\seq{r_1(n)}, \seq{r_2(n)}$, respectively.
  Moreover, $e_n \in \bigvee_{j=1}^{r_0(n)} f_j$ for all $n \in \mathbb{N}$.
\end{corollary}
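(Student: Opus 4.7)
The plan is to reduce this corollary to a direct application of \Cref{thm:arbitrary-staircase-forms} with a suitable choice of operators. The point is that a \emph{column} support condition on $T^{*}$ translates into a \emph{row} support condition on $T$, so controlling both rows and columns of a single operator amounts to controlling columns of the pair $\{T, T^{*}\}$.

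First I would set $m := 2$, $T_1 := T$, $T_2 := T^{*}$, and take the three functions $r_0, r_1, r_2$ as given in the hypothesis. Since by assumption $r_0, r_1, r_2$ are strictly increasing with disjoint ranges whose union is $\mathbb{N}$ and $r_0(1) = 1$, the hypotheses of \Cref{thm:arbitrary-staircase-forms} are satisfied. Applying that theorem produces an orthonormal basis $\seq{f_n}$ derived from the data $(\seq{e_n}, T, T^{*}, r_0, r_1, r_2)$ such that
\begin{equation*}
  e_n \in \bigvee_{j=1}^{r_0(n)} f_j, \qquad T f_n \in \bigvee_{j=1}^{r_1(n)} f_j, \qquad T^{*} f_n \in \bigvee_{j=1}^{r_2(n)} f_j,
\end{equation*}
for all $n \in \mathbb{N}$.

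Next I would translate the third inclusion into the desired row support statement for $T$. Indeed, the $(i,n)$-entry of the matrix of $T$ in the basis $\seq{f_n}$ is $(Tf_n, f_i) = (f_n, T^{*} f_i)$, so the row support length of $T$ at row $n$, namely the largest index $j$ with $(Tf_j, f_n) \neq 0$, equals the largest $j$ with $(f_j, T^{*}f_n) \neq 0$, which is precisely the column support length of $T^{*}$ at column $n$. By the third inclusion this is at most $r_2(n)$. The second inclusion gives the column support length of $T$ at column $n$ is at most $r_1(n)$, and the first inclusion recovers the assertion $e_n \in \bigvee_{j=1}^{r_0(n)} f_j$.

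I do not anticipate any serious obstacles: \Cref{thm:arbitrary-staircase-forms} already handles the main technical content (inductive construction, linearly dependent case, application of \Cref{lem:key-collapsing-lemma}), and the row/column support duality via adjoint is purely formal. The only small check is to verify that the partition hypothesis, stated there for a countable (possibly finite) family $\seq{r_k}_{k=0}^m$, genuinely covers the case $m = 2$, which it does since the excerpt explicitly allows finite $m$.
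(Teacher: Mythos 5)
Your proposal is correct and matches the paper's proof, which likewise consists of applying \Cref{thm:arbitrary-staircase-forms} with $T_1 := T$ and $T_2 := T^{*}$; the row/column duality via the identity $(Tf_j, f_i) = (f_j, T^{*}f_i)$ that you spell out is exactly the implicit translation the paper relies on. No gaps.
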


\begin{proof}
  Apply \Cref{thm:arbitrary-staircase-forms} with $T_1 := T$ and $T_2 := T^{*}$.
\end{proof}

\begin{theorem}[\protect{\cite[Theorem 20.8]{PPW-2020-TmloVLOt}}]
  \label{t3aa}
  For arbitrary $C\in\BH$ and any orthonormal basis $\seq{e_n}$ of $\Hil$, there exists an orthonormal basis $\seq{f_n}$  in which $C$ has a block tridiagonal form as in \eqref{F} with the block sizes as in \Cref{T1} (with $k_1=1$) and:
  \begin{enumerate}
  \item each block $B_n=(B_n' \mid 0\mid 0)^T$ where all three blocks are square and $B_n'$ is upper triangular, i.e., $B_n'(i,j)=0$ if $i>j$;
  \item each block $A_n$ is of the form $(A'_n \mid A_n'' \mid 0)$ where all three blocks are square and
    $A_n''$ is lower triangular, i.e., $A_n''(i,j)=0$ if $i<j$.
  \item \label{item:e_n-in-3^n} $e_1=f_1$ and $e_n\in \vee_{k=1}^{3^n} f_k$ for all $n\in\mathbb{N}$.
  \end{enumerate}
  Alternatively, $C$ is unitarily equivalent to another matrix of the form \eqref{F} with the block sizes as in \Cref{T1} (with $k_1=1$), where each $A_n = (A'_n \mid 0\mid 0)$), all three blocks are square, and $A_n'$ is lower triangular, while each $B_n$ has the form $(B'_n \mid B_n''\mid 0)^T$, all three blocks are square, and $B''_n$ is an upper triangular matrix, and \ref{item:e_n-in-3^n} holds.
  (We alert the reader that herein our notation $k_n$ has been changed from $n_k$ in \cite{PPW-2020-TmloVLOt}.)
\end{theorem}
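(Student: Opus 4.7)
My plan is to apply \Cref{cor:single-operator-arbitrary-staircase} to $T = C$ with a carefully chosen triple of strictly increasing functions $r_0, r_1, r_2 \colon \mathbb{N} \to \mathbb{N}$ whose ranges partition $\mathbb{N}$ and satisfy $r_0(1) = 1$. The functions $r_1, r_2$ will be designed so that the resulting column and row support lengths of $C$ in the new basis simultaneously encode both the block tridiagonal form \eqref{F} with the block sizes $k_n$ from \Cref{T1} \emph{and} the additional sparsifications inside the off-diagonal blocks, while $r_0$ fills the remaining positions of $\mathbb{N}$ and also controls the embedding of each $e_n$ into $\bigvee_k f_k$.

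Concretely, write $s_m := k_1 + \cdots + k_m = 3^{m-1}$ (with $s_0 := 0$) and, given $n \in \mathbb{N}$, uniquely express $n = s_{m-1} + n'$ with $1 \leq n' \leq k_m$. I would set $r_1(n) := s_m + n'$ and $r_2(n) := s_m + k_m + n'$, and $r_0(n) := s_m + 2 k_m + n'$ for $m \geq 2$, with the single exception $r_0(1) := 1$. A short calculation using $s_{m+1} = s_m + 3 k_m$ (for $m \geq 2$) shows that these are strictly increasing, that
\begin{align*}
r_1(\mathbb{N}) &= \bigcup_{m \geq 1} \{s_m+1,\ldots,s_m+k_m\}, \\
r_2(\mathbb{N}) &= \bigcup_{m \geq 1} \{s_m+k_m+1,\ldots,s_m+2k_m\}, \\
r_0(\mathbb{N}) &= \{1\} \cup \bigcup_{m \geq 2} \{s_m+2k_m+1,\ldots,s_{m+1}\}
\end{align*}
are disjoint with union $\mathbb{N}$, and that $r_0(n) \leq s_{m+1} = 3^m \leq 3^n$ (using $n \geq s_{m-1}+1 \geq m$).

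Applying \Cref{cor:single-operator-arbitrary-staircase} then produces an orthonormal basis $\seq{f_n}$ with $f_1 = e_1$ such that $e_n \in \bigvee_{k=1}^{r_0(n)} f_k \subseteq \bigvee_{k=1}^{3^n} f_k$, and with respect to which the column and row support lengths of $C$ are $r_1(n)$ and $r_2(n)$. Reading off the matrix structure is then a direct translation: for a column $j = s_{m-1}+j'$, the constraint that all entries in rows greater than $s_m + j'$ vanish both places $C$ in block tridiagonal form (since $s_m + j' < s_{m+1}$) and, inside the off-diagonal block $B_m$, forces nonzero entries to lie only in rows $s_m+1,\ldots,s_m+j'$ of the top $k_m \times k_m$ sub-block---precisely the upper-triangular part of $B_m'$, with the two sub-blocks below vanishing. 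Dually, $r_2(i) = s_m + k_m + i'$ forces $A_m = (A_m' \mid A_m'' \mid 0)$ with $A_m''$ lower triangular; invoking $r_2$ on rows in block-rows $\leq m-2$ kills the would-be blocks above the first superdiagonal, completing the block tridiagonal form. The alternative form is obtained by running the same construction on $C^*$ (equivalently, by interchanging the roles of $r_1$ and $r_2$), which simply swaps the sparsity patterns of $A_n$ and $B_n$.

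The main obstacle is the (purely combinatorial) verification that the three ranges partition $\mathbb{N}$ and that each function remains strictly increasing across block boundaries; beyond that, the translation to matrix form is mechanical. A secondary concern is the edge case $m = 1$, where $k_2 = 2 k_1$ rather than $3 k_1$, so $A_1, B_1$ do not literally split into three square $k_1 \times k_1$ sub-blocks; here the support-length argument still delivers the correct zeros with the middle and right sub-blocks of $A_1, B_1$ interpreted as vacuous.
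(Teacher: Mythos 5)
Your proposal is correct and takes essentially the same approach as the paper: both proofs invoke \Cref{thm:arbitrary-staircase-forms} (via \Cref{cor:single-operator-arbitrary-staircase}) with a triple $r_0,r_1,r_2$ of strictly increasing functions partitioning $\mathbb{N}$, and then translate the prescribed column/row support lengths into the block tridiagonal form with $B_m'$ upper triangular and $A_m''$ lower triangular, handling the alternative form by the $r_1 \leftrightarrow r_2$ (equivalently $C \leftrightarrow C^{*}$) symmetry. The only divergence is the parameter choice --- the paper takes $r_0(n)=3^n$ for $n>1$ and so must bound its $r_2$ by a short computation ($r_2(n) \le 3^k + 2\cdot 3^{k-1} + (n-3^{k-1})$), while you prescribe $r_2(i) = s_m + k_m + i'$ exactly and place the last third of each block range into $r_0$, which is equally valid (your $r_1$ coincides with the paper's) and in fact gives \ref{item:e_n-in-3^n} with the sharper bound $r_0(n) < 7n \le 3^n$.
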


\begin{proof}
  We apply \Cref{thm:arbitrary-staircase-forms} with $T_1 := C$, $T_2 := C^{*}$, $r_0(n) := 3^n$ for $n > 1$, and $r_1(1) = 2$ and for $n > 1$,
  \begin{equation*}
    r_1(n) := 3^{\lceil \log_3 n \rceil} + n - 3^{\lceil \log_3 n \rceil - 1},
  \end{equation*}
  and $r_2 : \mathbb{N} \to \mathbb{N}$ is the unique strictly increasing function whose range is $\mathbb{N} \setminus (r_0(\mathbb{N}) \cup r_1(\mathbb{N}))$.
  In particular, the ranges of these functions are:
  \begin{align*}
    r_0(\mathbb{N}) &= \{ 1 \} \cup \{ 3^k \mid k > 1 \} \\
    r_1(\mathbb{N}) &= \{ 2 \} \cup \{ 3^k + j \mid k \ge 1, 1 \le j \le 2 \cdot 3^{k-1} \} \\
    r_2(\mathbb{N}) &= \{ 3 \} \cup \{ 3^k + j \mid k \ge 1, 2 \cdot 3^{k-1} < j < 2 \cdot 3^k \} \\
  \end{align*}
  \Cref{thm:arbitrary-staircase-forms} guarantees that the sequences of column and row (column of $C^{*}$) support lengths of $C$ are given by $\seq{r_1(n)}_{n=1}^{\infty}$ and $\seq{r_2(n)}_{n=1}^{\infty}$, respectively.
  Therefore, since $r_1(1) = 2$ and $r_2(1) = 3$, this yields $C_1,A_1,B_1$.
  Moreover, the column lengths being given by $r_1(n)$ precisely guarantees that each block $B_k$ (thinking of $k = \lceil \log_3 n \rceil + 1$) has the desired form with $B_k'$ upper triangular.

  Since the row support lengths for rows 2 and 3 are $r_2(2) = 6$ and $r_2(3) = 7$, this guarantees $A_2$ has the desired form.
  Each block of entries $\{ 3^k + j \mid 2 \cdot 3^{k-1} < j < 2 \cdot 3^k \}$ in $r_2(\mathbb{N})$ has size $4 \cdot 3^{k-1} - 1$.
  Therefore, if $r_2(n) = 3^{k'} + j$ for some $k' > 1$ and $2 \cdot 3^{k'-1} < j < 2 \cdot 3^{k'-1}$, then because $r_2$ is strictly increasing, we must have
  \begin{equation*}
    n = 1 + \sum_{i=1}^{k'-1} (4 \cdot 3^{i-1} - 1) + (j - 2 \cdot 3^{k'-1}) = j - k'.
  \end{equation*}
  Consequently, if also $3^{k-1} < n \le 3^k$ (for $k > 1$), then we have the inequality
  \begin{equation*}
    2 \cdot 3^{k'-1} - k' < j - k' = n \le 3^k,
  \end{equation*}
  which necessitates $k' \le k$.
  Therefore,
  \begin{equation*}
    r_2(n) = 3^{k'} + j = 3^{k'} + n + k' \le 3^k + n + 3^{k-1} = 3^k + (2 \cdot 3^{k-1}) + (n - 3^{k-1}).
  \end{equation*}
  Since $k = \lceil \log_3 n \rceil$, and $r_2(n)$ denotes the row support length, we see that $A_{k+1}$ has the desired form.
\end{proof}

\begin{remark}
  In the case where, at every stage, the initial vectors in the list obtained in the proof of \Cref{thm:arbitrary-staircase-forms} are linearly independent from what came before, the construction leads to the following list of vectors: $\seq{T_{*} g_{r_{*}^{-1}(n)}}_{n=1}^{\infty}$, where $*$ denotes the unique $0 \le k \le m$ for which $n \in r_k(\mathbb{N})$, and for convenience we define $T_0 g_{r_0^{-1}(n)} := e_{r_0^{-1}(n)}$.
  In particular, in the application of \Cref{thm:arbitrary-staircase-forms} (or more precisely, \Cref{cor:single-operator-arbitrary-staircase}) in the proof of \Cref{t3aa} above, this list is:
  \begin{align*}
    &e_1,Ce_1,C^*e_1,\\
    &\phantom{e_1,}
      Cg_2,Cg_3,\\
    &\phantom{e_1,}C^*g_2,C^*g_3,C^*g_4,\\
    &e_2,Cg_4,Cg_5,Cg_6,Cg_7,Cg_8,Cg_9,\\
    &\phantom{e_1,} C^*g_5,C^*g_6,C^*g_7,C^*g_8,..., C^*g_{15},\\&
    e_3,Cg_{10},Cg_{11},\dots
  \end{align*}
  where $g_2 = Ce_1$, $g_3=C^*e_1$, $g_4=Cg_2=C^2 e_1$, $g_5=Cg_3=CC^*e_1$, $g_6=C^*g_2=C^*Ce_1, \dots$.

  This is the same list which was constructed in the proof of \cite[Theorem 20.8]{PPW-2020-TmloVLOt} for the linearly independent case.
  So the induction proof given above subsumes this linearly independent case.
\end{remark}

\begin{remark}
  Both \Cref{T2} and \Cref{t3aa} exhibit a lack of symmetry regarding the role of blocks $\{A_n\}$ and $\{B_n\}$.
  We note that it should be possible to obtain a more symmetric sparsification of $A_n,B_n$ by modifying the functions $r_0,r_1,r_2$ in \Cref{t3aa}.

  In particular, set $r_0(n) = 2^{n+1} - n - 2$, and define $r_1(1) = 2$ and for all $r_0(k-1) < n \le r_0(k)$ with $k > 1$ (one can show $k = \lfloor \log_2 (2n) \rfloor - 1$), we set
  \begin{align*}
    r_1(n) &:= r_0(k) + 2 (n - r_0(k-1)) - 1 \\
    r_2(n) &:= r_1(n) + 1.
  \end{align*}
  Note that $r_0(n)$ satisfies the recurrence relation
  \begin{equation*}
    r_0(n+1) - r_0(n) = 2^{n+1} - 1 = 2(2^n - 1) + 1 = 2(r_0(n) - r_0(n-1)) + 1.
  \end{equation*}
  Then it is not hard to check that these satisfy the hypotheses of \Cref{thm:arbitrary-staircase-forms}.

  Therefore, taking $T_1 := C$, and $T_2 := C^{*}$, then as in the proof of \Cref{t3aa}, in the basis $\seq{f_n}$ provided by \Cref{thm:arbitrary-staircase-forms}, the operator $C$ has column and row support lengths given by $\seq{r_1(n)}, \seq{r_2(n)}$, respectively.
  Notice that
  \begin{equation*}
    r_2(r_0(n)) = r_0(n) + 2 (r_0(n) - r_0(n-1)) = r_0(n+1).
  \end{equation*}
  Consequently, because $r_i(r_0(n)) \le r_0(n+1)$ for all $i = 1,2$ and $n \in \mathbb{N}$, $C$ is in block tridiagonal form with block sizes given by $k_1 = 1 = r_0(1)$ and $k_n = r_0(n) - r_0(n-1) = 2^n - 1$ for $n > 1$.
  In this case, the off-diagonal blocks, which have an aspect ratio barely exceeding $2 : 1$, are more symmetrical.
  Indeed, the off-diagonal blocks are ``upper (lower) triangular'' in the sense that the $(i,j)$ entry of an off-diagonal block is zero if $i > 2j+1$ (respectively, $j > 2i +1$),
  i.e., the row and column support lengths increase by $2$ instead of $1$.
\end{remark}

\begin{remark}
  Reiterating our \cite[Problem 20.11]{PPW-2020-TmloVLOt}:
  Given an operator $C$, is there an orthonormal basis  in which
  $C$ is of the form as in \Cref{T2}, but where additionally each  $A'_n$ is a diagonal matrix?
  In particular, we wish to target the $5\times 5$ case as we did in \cite{PPW-2020-TmloVLOt}.
\end{remark}

% % % % % % % % % % % % % % % % % % % % % % % % % % % % % % % % % % % % % % % % % % 
% % % % % % % % % % % % % % % % % % % % % % % % % % % % % % % % % % % % % % % % % % 

\section{Support density}
\label{secdens}

In \Cref{sec2} we presented several universal block tridiagonal forms.
For each of them we strove to find a basis in which the matrix of a given operator is sparse and had a particularly useful form.
Now we will try to quantify their sparseness by considering the density of their support entries (meaning entries that need not be zero).
This is a basis dependent measure that depends also on the index ordering of the basis having index order type $\mathbb{N}$, i.e., order isomorphic to $\mathbb{N}$.

Matrices can often carry data in a kind of organized way which we think of as special matrix forms.
By a \term{matrix form}, we mean a subset of entry positions $(i,j)$ where matrices in the specified form must be zero;
we refer to the positions in the complement of this set as \term{support entries}.
For example, diagonal matrices are a matrix form corresponding to the subset $\{ (i,j) \mid i \neq j \}$.
Note that any given matrix in a specified form may have zeros as some of its support entries.

Laurent and Toeplitz infinite matrices carry data that is spread out, whereas diagonal matrices or band matrices have their data relegated close to the diagonal, and these matrices are quite sparse.
Similarly, any rank one projection operator can have a full matrix representation; but in some basis that rank one projection has all its support data relegated to its $(1,1)$ entry.
This indicates the inherent basis dependent nature of these matrix forms.
Our staircase and block tridiagonal forms also capture this idea by concentrating the support data, and our improvements at the end of the \Cref{sec2} sparsify these forms further still.
In this section we attempt to measure sparsification as compared to upper triangular matrices, upper Hessenberg matrices, diagonal matrices, etc.

Given a finite $N\times N$ matrix form, we define the number $L_N$ as the number of support entries in it, and the support density by the average $D_N=L_N/N^2$.
For an infinite matrix form $A$ we will consider the upper left $N\times N$ corners, i.e., the compressions $A_N :=P_NAP_N$ with projections $P_N$ onto $\bigvee_{n=1}^N e_n$.
The support density, or simply called density, of $A$ will mean
\begin{equation}
  \label{density}
  D=\lim_{N\to\infty} D_N=
  \lim_{N\to\infty} \frac{L_N}{N^2},
\end{equation}
whenever that limit exists.
Here $L_N$ and $D_N$ are calculated for the matrix $A_N$.

For several special classes of operators we briefly describe their special matrix forms and their densities.
By the spectral theorem for normal compact operators, such an operator can be represented by a diagonal matrix, and the density of the diagonal matrix form is obviously zero.
Moving away from the class of normal operators to the class of triangularizable operators, in the basis which triangularizes them all have density $1/2$.
Operators with a cyclic vector can be represented as an upper Hessenberg matrix.
Indeed, for cyclic vector $e$ apply the Gram--Schmidt process to $\seq{e, Te, T^2e, \dots }$.
The upper Hessenberg matrix form is visually similar to an upper triangular matrix except that its support entries on the first subdiagonal.
(In contrast, only triangularizable operators have bases relative to which their matrix representation is triangular.)
Upper Hessenberg matrices have density $1/2$, so all operators with cyclic vectors have a basis relative which their matrix representation has density $1/2$.
In the special case when the operator is selfadjoint the upper Hessenberg form reduces to a tridiagonal matrix, and the density of the tridiagonal matrix form (corresponding to $\{ (i,j) \mid \abs{i-j} > 1\}$) is clearly zero.
Without a cyclic vector the construction used to obtain the last two matrix forms (Hessenberg and tridiagonal) leads to a direct sum of these forms, but our definition of density is not compatible with direct sums of infinite matrices.

With density defined above we can go back to universal forms of \Cref{sec2}.
We begin with the staircase form \eqref{E0}.

\begin{theorem}
  \label{sec5a}
  The staircase form \eqref{E0} has density $2/3$.
\end{theorem}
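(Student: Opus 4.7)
The plan is to read off the support set of the staircase form \eqref{E0} from \Cref{t1} and then compute the limit in \eqref{density} by a direct area comparison. Since the form has both row and column support lengths equal to $\seq{3n}_{n=1}^\infty$, the nonzero-admissible positions of the infinite matrix are exactly
\[
S = \{(i,j) \in \mathbb{N} \times \mathbb{N} : i \le 3j \text{ and } j \le 3i\},
\]
so the number $L_N$ of support entries of the $N \times N$ upper-left compression is $L_N = \lvert S \cap \{1,\ldots,N\}^2 \rvert$. First I would make this identification explicit and observe that $S$ is symmetric under $(i,j) \mapsto (j,i)$, which makes the counting cleaner.

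Next I would compare $L_N$ to the area $A_N$ of the planar region
\[
R_N = \{(x,y) \in [0,N]^2 : x \le 3y \text{ and } y \le 3x\}.
\]
The boundary of $R_N$ consists of three line segments of length $O(N)$ (two lines through the origin and one portion of the edge of the square), so a standard lattice-count comparison gives $\lvert L_N - A_N \rvert = O(N)$. Upon dividing by $N^2$, this discrepancy contributes $O(1/N)$ and vanishes in the limit, so it suffices to compute $\lim_{N\to\infty} A_N/N^2$.

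Then I would evaluate $A_N$ by splitting at $x = N/3$, the abscissa at which the line $y = 3x$ exits the square $[0,N]^2$:
\[
A_N = \int_0^{N/3}\!\left(3x - \tfrac{x}{3}\right)dx \;+\; \int_{N/3}^{N}\!\left(N - \tfrac{x}{3}\right)dx = \frac{4N^2}{27} + \frac{14 N^2}{27} = \frac{2 N^2}{3}.
\]
Dividing by $N^2$ and combining with the comparison $L_N = A_N + O(N)$ gives $D = \lim_{N\to\infty} L_N/N^2 = 2/3$, as claimed.

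There is no serious obstacle: the argument is almost entirely a Riemann-sum style comparison of a lattice point count with the area of a sector-like region bounded by two lines through the origin, and the only bookkeeping to watch is confirming that the lattice-versus-area error is genuinely $O(N)$ and that the two integrals are evaluated correctly. If desired, one could avoid the continuous comparison entirely by writing $L_N = \sum_{j=1}^{N} \bigl(\min(N,3j) - \lceil j/3 \rceil + 1\bigr)$ and summing directly in the two regimes $1 \le j \le \lfloor N/3 \rfloor$ and $\lfloor N/3 \rfloor < j \le N$, obtaining the same asymptotic $L_N = \tfrac{2}{3}N^2 + O(N)$.
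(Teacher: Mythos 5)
Your proposal is correct and follows essentially the same route as the paper: both identify the support entries as the positions $(i,j)$ with $i \le 3j$ and $j \le 3i$, approximate $L_N$ by the area of the corresponding planar region in the $N \times N$ corner (the paper via a diagram, you via the two integrals, which are evaluated correctly), and dismiss the $O(N)$ lattice-versus-area discrepancy after dividing by $N^2$. No gaps to report.
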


\begin{proof}
  Recall that $(i,j)$ is a support entry of the staircase form \eqref{E0} if and only if $i \le 3j$ and $j \le 3i$.
  Taking the $N \times N$ compression, the diagram in \Cref{fig:staircase-density} shows that the density $D_N = L_N/N^2$ clearly approaches $2/3$.
  Of course, the values of $L_N$ given in the diagram are not exact since they may not be integers, but they are at least asymptotically equivalent (they differ from the true values by at most $6N$), which is all that matters.
  \begin{figure}[h]
    \centering
    \includegraphics{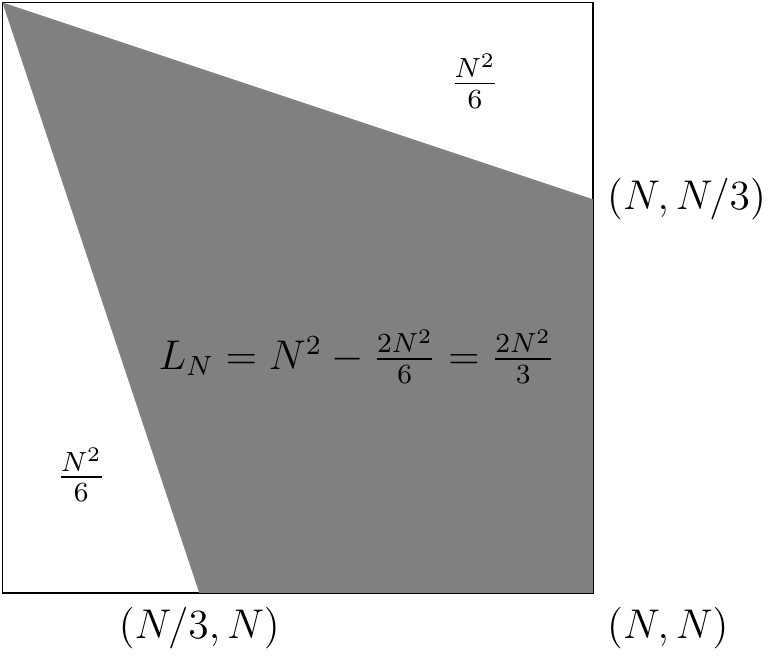}
    \caption{Asymptotic density of the staircase form \eqref{E0}.}
    \label{fig:staircase-density}
  \end{figure}
\end{proof}

It was noticed in \cite[Remark 20.3(i)]{PPW-2020-TmloVLOt} that, when $T$ has a cyclic vector (or more generally a vector $v$ for which the collection $p(T,T^*)v$ is dense), we obtain the matrix form where $(i,j)$ is a support entry if and only if $i \le 2j$ and $j \le 2i+1$.
For such matrices density is naturally smaller.
Since the proof follows the same template we leave it to the reader and merely state the result.

\begin{theorem}
  Every operator with cyclic vector has a basis relative to which its matrix is in staircase form with support rows and columns respectively $2,4,6,8,\dots$ and $3,5,7,\dots$, and these matrices have density $1/2$.
\end{theorem}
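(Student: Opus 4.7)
The plan is to verify both halves of the statement by reduction to earlier results in \Cref{sec2}. For the existence of the basis I would invoke the template from \cite[Remark~20.3(i)]{PPW-2020-TmloVLOt} (referenced in the paragraph just before the theorem): apply \Cref{thm:arbitrary-staircase-forms} with $T_1 := T$, $T_2 := T^*$, and the strictly increasing functions $r_1(n) := 2n$, $r_2(n) := 2n+1$ whose ranges are the even positive integers and the odd integers $\ge 3$. Together with the singleton $\{1\}$ these partition $\mathbb{N}$, so the scheme of \Cref{thm:arbitrary-staircase-forms} collapses to the case where $v$ (any cyclic vector for $T$, hence automatically $*$-cyclic) is the sole seed vector: set $e_1 := v$ and use no further $e_n$'s.

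Following the proof template of \Cref{thm:arbitrary-staircase-forms}, I would build a sequence $\seq{g_n}$ by $g_1 := v$, $g_n := T g_l$ when $n = 2l$, and $g_n := T^* g_l$ when $n = 2l+1$. This enumerates without repetition every word in $T,T^*$ applied to $v$, so $\bigvee g_n = \{p(T,T^*)v\}$ is dense. Gram--Schmidt produces an orthonormal basis $\seq{f_n}$, and \Cref{lem:key-collapsing-lemma} applied twice --- once to $T$ with $m_n = 2n$ (since $Tg_n = g_{2n}$) and once to $T^*$ with $m_n = 2n+1$ (since $T^*g_n = g_{2n+1}$) --- yields
\begin{equation*}
  T f_n \in \bigvee_{j=1}^{2n} f_j \quad\text{and}\quad T^* f_n \in \bigvee_{j=1}^{2n+1} f_j,
\end{equation*}
so the matrix of $T$ has column support length $2n$ at column $n$ and (by dualizing the $T^*$ bound) row support length $2n+1$ at row $n$, matching the stated sequences.

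I expect the main obstacle to be the linearly dependent case, where $Tg_l$ or $T^*g_l$ already lies in $\bigvee_{j<n} g_j$, making the naive rule produce a zero vector and breaking \Cref{lem:key-collapsing-lemma}. The remedy mirrors the linearly dependent branch in the proof of \Cref{thm:arbitrary-staircase-forms}: replace a degenerate candidate with the first linearly independent vector from a pre-fixed enumeration of the remaining unused words $w(T,T^*)v$. Density of $\{p(T,T^*)v\}$ guarantees the replacement exists until the basis is complete, and the inclusion bounds $2n$ and $2n+1$ are preserved because each replacement word already appears in the span of some $\bigvee_{j \le m} g_j$ with $m$ no larger than what the original recursion prescribed.

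For the density, the support region $\{(i,j) : i \le 2j,\ j \le 2i+1\}$ is the wedge between the lines $i = 2j$ (slope $2$) and $j = 2i+1$ (slope $1/2$) through the origin in the first quadrant. The only routine calculation I would carry out is an area count of this wedge inside the $N \times N$ square: split at $i \approx N/2$ into a triangle region (where $2i+1 \le N$) and a trapezoid region (where $2i+1 > N$). Straightforward integration gives areas $3N^2/16$ and $5N^2/16$ respectively, summing to $N^2/2$ up to an $O(N)$ boundary correction. Dividing by $N^2$ in the definition \eqref{density} yields $D = 1/2$.
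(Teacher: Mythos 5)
Your overall route is the intended one: the paper itself leaves this proof to the reader, citing \cite[Remark 20.3(i)]{PPW-2020-TmloVLOt} for the form and the counting template of \Cref{sec5a} for the density, and your density count is correct --- the support region $\{(i,j): i \le 2j,\ j \le 2i+1\}$ meets the $N \times N$ corner in $N^2/2 + O(N)$ positions, so $D = 1/2$. Two small remarks: your form has column support lengths $2n$ and row support lengths $2n+1$, which is the transpose of the literal statement (harmless; swap the roles of $T$ and $T^*$ in the enumeration, or note the in-text description before the theorem is stated your way); and, as you implicitly concede, \Cref{thm:arbitrary-staircase-forms} cannot be invoked verbatim, since $r_0$ would need the finite range $\{1\}$ while the hypotheses require an honest strictly increasing function, so you are really re-running its proof with a single seed vector rather than applying it.

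The genuine soft spot is exactly the linearly dependent case --- the very point the paper flags as the source of the subtle gap in \cite{PPW-2020-TmloVLOt}. Your stated reason, that ``each replacement word already appears in the span of some $\bigvee_{j \le m} g_j$ with $m$ no larger than what the original recursion prescribed,'' concerns where the replacement $w$ itself sits, but that is not what the support bounds require: you need $T g_n \in \bigvee_{j=1}^{2n} g_j$ and $T^* g_n \in \bigvee_{j=1}^{2n+1} g_j$ for the accepted (possibly substituted) $g_n$, and if later candidates are still drawn from the original word tree, then for a substituted long word $w = g_n$ there is no reason $Tw$ should lie in the span of the first $2n$ accepted vectors. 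The repair is the mechanism actually used in the proof of \Cref{thm:arbitrary-staircase-forms}: generate subsequent candidates from the accepted vectors themselves, so that the candidate examined at slot $2n$ is literally $T g_n$ and at slot $2n+1$ is $T^* g_n$; then, accepted or rejected, that candidate lies in $\bigvee_{j=1}^{2n} g_j$ (resp.\ $\bigvee_{j=1}^{2n+1} g_j$), which is all \Cref{lem:key-collapsing-lemma} needs. You must also verify that the substituted family still has dense span (your ``until the basis is complete'' assumes this rather than proves it): induct on word length using the inclusions just established --- if $Wv \in \spans\{g_1,\dots,g_N\}$ then $TWv \in \spans\{g_1,\dots,g_{2N}\}$ and $T^*Wv \in \spans\{g_1,\dots,g_{2N+1}\}$ --- so every word in $T,T^*$ applied to $v$ lies in the span of the $g_n$, and cyclicity of $v$ gives completeness after Gram--Schmidt. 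With these two repairs your argument is complete and coincides with the paper's intended template.
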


In \cite[Theorem 20.8]{PPW-2020-TmloVLOt} (\Cref{t3aa} herein) we gave a modification of the construction of \Cref{t1}.
It turns out that the sparsified matrices obtained that way have also density $1/2$ even when the operators from which they came do not have a cyclic vector.

\begin{theorem}
  \label{thm:arbitrary-staircase-density}
  The sparsified staircase form obtained in the proof of \Cref{t3aa} has density $1/2$.
  More generally, the staircase form of \Cref{cor:single-operator-arbitrary-staircase} has density $1/2$ whenever $\lim_{n \to \infty} n/r_0(n) = 0$.
\end{theorem}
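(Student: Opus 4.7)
The plan is to prove the general statement directly; the first assertion then follows as the special case $r_0(n) = 3^n$ for $n > 1$, which trivially satisfies $n/r_0(n) \to 0$. Let
\begin{equation*}
  L_N := \abs{\{(i,j) : 1 \le i,j \le N,\ i \le r_1(j),\ j \le r_2(i)\}}
\end{equation*}
denote the number of support entries in the $N \times N$ upper-left compression. The key observation is that on each side of the diagonal one of the two defining conditions is automatic: since $r_0(1) = 1$ forces $r_k(1) \ge 2$ for $k = 1,2$, a trivial induction yields $r_k(n) \ge n$ for all $n$. Therefore for $j \ge i$ we have $r_1(j) \ge j \ge i$, so the column constraint $i\le r_1(j)$ is free; symmetrically in the strict lower triangle the row constraint $j \le r_2(i)$ is free.

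Splitting $L_N$ into its upper (including diagonal) and strict lower contributions according to this observation gives
\begin{equation*}
  L_N = \sum_{i=1}^N (\min(r_2(i), N) - i + 1) + \sum_{j=1}^N (\min(r_1(j), N) - j).
\end{equation*}
I would then introduce the counting and summing functions
\begin{equation*}
  A_k(N) := \abs{r_k(\mathbb{N}) \cap [1, N]} \quad \text{and} \quad S_k(N) := \sum_{m \in r_k(\mathbb{N}) \cap [1, N]} m \quad (k = 0, 1, 2),
\end{equation*}
and split each of the two sums above at the index $A_k(N)$ beyond which $r_k(\cdot)$ first exceeds $N$, so that each half admits a closed form. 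A short elementary computation yields the exact identity
\begin{equation*}
  L_N = S_1(N) + S_2(N) + A_0(N)\, N.
\end{equation*}

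The partition hypothesis $r_0(\mathbb{N}) \sqcup r_1(\mathbb{N}) \sqcup r_2(\mathbb{N}) = \mathbb{N}$ then provides the crucial substitution $S_1(N) + S_2(N) = N(N+1)/2 - S_0(N)$, which after dividing by $N^2$ gives
\begin{equation*}
  \frac{L_N}{N^2} = \frac{N+1}{2N} - \frac{S_0(N)}{N^2} + \frac{A_0(N)}{N}.
\end{equation*}
The hypothesis $n/r_0(n) \to 0$ is equivalent to $A_0(N)/N \to 0$, and combined with the trivial bound $S_0(N) \le N \cdot A_0(N)$ this forces $S_0(N)/N^2 \to 0$. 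Hence both correction terms vanish and $L_N/N^2 \to 1/2$. The argument is elementary throughout; the only delicate step is verifying the closed form for $L_N$, which requires careful bookkeeping when each sum is split at $A_k(N)$, but no conceptual obstacle is anticipated.
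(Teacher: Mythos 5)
Your argument is correct, and it takes a genuinely different route from the paper's.  The paper works with the first differences $\Delta_N = L_{N+1} - L_N$, shows $\Delta_N - \Delta_{N-1} \in \{1,2\}$ with the value $2$ occurring precisely when $N \in r_0(\mathbb{N})$, and then recovers $L_N$ by observing that the $\Delta$'s are the positive integers with the values $r_0(j)+j$ removed; summing and bounding gives $D_N \to 1/2$.  Your proof instead begins with the structural observation that since $r_1, r_2$ are strictly increasing with $r_1(1), r_2(1) \ge 2$, the constraint $i \le r_1(j)$ is automatic for $j \ge i$ and $j \le r_2(i)$ is automatic for $i > j$, so the staircase condition reduces to a single constraint on each side of the diagonal.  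This yields a clean triangle-by-triangle count $L_N = S_1(N) + S_2(N) + A_0(N)\,N$, after which the partition identity $S_0(N)+S_1(N)+S_2(N) = N(N+1)/2$ finishes the computation.  Both proofs arrive at essentially the same closed form for $L_N$, and both require the same hypothesis $A_0(N)/N \to 0$ (which you correctly identify as equivalent to $n/r_0(n) \to 0$).  What your route buys is an exact, self-contained formula for $L_N$ with no case analysis on increments, and an explicit use of the partition hypothesis that makes clear why only the density of $r_0(\mathbb{N})$ matters; what the paper's route buys is a more visual, increment-based picture of how support accrues row by row.  The one step you flagged as needing careful bookkeeping --- the split of each sum at $A_k(N)$ --- does go through: $\sum_{i=1}^N \min(r_k(i),N) = S_k(N) + (N - A_k(N))N$ since $r_k(i) \le N$ iff $i \le A_k(N)$, and combining these with $A_1(N)+A_2(N) = N - A_0(N)$ gives exactly your claimed identity.
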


\begin{proof}
  Consider the staircase form of \Cref{cor:single-operator-arbitrary-staircase} and let $L_N$ denote the number of support entries in the upper left $N \times N$ corner with the convention that $L_0 := 0$.
  Then for $N \ge 0$ set $\Delta_N := L_{N+1} - L_N$ to be the number of support entries appearing in column $N+1$ and row $N+1$ (within the $(N+1) \times (N+1)$ block).

  Of course, $\Delta_0 = 1$, and because $T$ is a \emph{staircase} form $\Delta_N \le \Delta_{N-1} + 2$ for all $N \ge 1$.
  This is because support entries in subsequent rows/columns of a staircase form can only occur below and/or to the right of those in the previous row/column.

  In fact, since the staircase form arises from \Cref{cor:single-operator-arbitrary-staircase}, $\Delta_N = \Delta_{N-1} + 2$ if and only if $N \in r_0(\mathbb{N})$, and otherwise $\Delta_N = \Delta_{N-1} + 1$.
  Indeed, if $N = r_1(m)$, then the $(N+1,m)$ position is not a support entry, and if $N = r_2(m)$, then the $(m,N+1)$ position is not a support entry, whereas if $N = r_0(m)$, then both $(N+1,m)$ and $(m,N+1)$ are support entries.
  Therefore, the sequence $\seq{\Delta_N}$ is the subsequence of positive integers which skips an integer at the indices $r_0(N)$ for each $N$.

  Consequently, let $N \in \mathbb{N}$ and suppose that $k$ is the unique positive integer such that $r_0(k) \le N < r_0(k+1)$.
  By filling in the missing integers (namely, $r_0(j) + j$ for $1 \le j \le k$) in the sequence $\seq{\Delta_N}$ and taking the sum, we obtain
  \begin{equation*}
    L_N + \sum_{j=1}^k (r_0(j) + j) = \sum_{i=0}^{N-1} \Delta_i + \sum_{j=1}^k (r_0(j) + j) = \sum_{j=1}^{N+k} j = (N+k)(N+k+1)/2.
  \end{equation*}
  Therefore,
  \begin{equation*}
    L_N = \frac{N^2 + 2kN + N}{2} - \sum_{j=1}^k r_0(j).
  \end{equation*}
  Since $0 < r_0(j) \le N$ for all $1 \le j \le k$, we find
  \begin{equation*}
    \frac{N^2 + kN + N}{2} \le L_N < \frac{N^2 + 2kN + N}{2},
  \end{equation*}
  and dividing through by $N^2$ yields
  \begin{equation*}
    \frac{1}{2} + \frac{k + 1}{2N} \le D_N < \frac{1}{2} + \frac{2k + 1}{2N}.
  \end{equation*}
  Finally, by hypothesis, as $N \to \infty$ so also $k \to \infty$, and $k/N \le k/r_0(k) \to 0$.
  Hence $D_N \to 1/2$.
\end{proof}

\begin{remark}
  The previous theorem applies to the staircase form obtained in the proof of \Cref{t3aa}, but not to the block tridiagonal form specified in the statement of the theorem, even taking into account the sparsification obtained for the off-diagonal blocks.
  In fact, the density of the block tridiagonal form given in \Cref{t3aa} can be shown to be $11/18$.
  The discrepancy is due to the fact that block tridiagonal form does not take full advantage of the row support lengths dictated by the corresponding staircase form, as can be seen by examining the proof.
\end{remark}

One could pose more general definitions of density for which the work above gives little information.
Moreover, the density is highly basis dependent.
We now show that even allowing permutations of certain bases, including those corresponding to our universal block tridiagonal form, can result in radically different densities, including density zero.

\begin{theorem}
  \label{thm:subsequence-density-permutation}
  Suppose that $T$ is a matrix form and $\seq{e_n}$ is an orthonormal basis of $\Hil$.
  Suppose there is a subsequence $\seq{e_{m_k}}_{k=1}^{\infty}$ such that the compression of $T$ to $\bigvee_{j=1}^k e_{m_j}$ has density $\delta$.
  Then there is a permutation $\pi$ of $\mathbb{N}$ for which $T$ has density $\delta$ relative to the orthonormal basis $\seq{e_{\pi(n)}}$.
\end{theorem}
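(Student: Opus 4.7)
The plan is to construct the permutation $\pi$ by interleaving the subsequence $\seq{e_{m_k}}$ with its complement so that the subsequence asymptotically dominates every initial segment of the permuted basis; then the $N \times N$ compression relative to $\seq{e_{\pi(n)}}$ will differ in support count from the compression to the first $N - o(N)$ subsequence vectors only by a correction of order $o(N^{2})$, and the density $\delta$ will transfer.

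First I would enumerate $M := \{m_k\}_{k\ge 1}$ and its complement $M^{c} = \mathbb{N}\setminus M$. If $M^{c}$ is finite the argument is easier (place the finitely many complement elements in an initial segment and then list the $m_k$'s in order), so assume $M^{c} = \seq{n_j}_{j=1}^{\infty}$ is infinite. Next I would pick a rapidly increasing sequence of positions $1 < N_1 < N_2 < \cdots$ with $j/N_j \to 0$, for instance $N_j := 2^{j}$, and define $\pi$ by $\pi(N_j) := n_j$ at the sparse positions $N_j$ while filling the remaining positions, in increasing order, with $m_1,m_2,\dots$ in order. This is a bijection of $\mathbb{N}$ since $\{N_j\}$ and its complement partition $\mathbb{N}$ and $\{n_j\}$ and $\{m_k\}$ partition $\mathbb{N}$.

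Let $c_N := |\{j : N_j \le N\}|$, so $c_N/N \to 0$ by construction, and
\[ \{\pi(i) : 1 \le i \le N\} = \{m_1,\dots,m_{N-c_N}\} \cup \{n_j : N_j \le N\}. \]
Splitting the support entries of the $N\times N$ compression according to whether both row and column are mapped into $M$ or at least one is mapped into $M^{c}$, I would derive the sandwich
\[ L^{\operatorname{sub}}_{N-c_N} \le L^{\pi}_N \le L^{\operatorname{sub}}_{N-c_N} + \bigl(N^{2} - (N-c_N)^{2}\bigr) \le L^{\operatorname{sub}}_{N-c_N} + 2Nc_N, \]
where $L^{\operatorname{sub}}_k$ denotes the number of support entries in the compression of $T$ to $\bigvee_{j=1}^{k} e_{m_j}$ and $L^{\pi}_N$ denotes the support count in the $N\times N$ compression of $T$ relative to $\seq{e_{\pi(n)}}$. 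Dividing by $N^{2}$ and invoking the hypothesis $L^{\operatorname{sub}}_{N-c_N}/(N-c_N)^{2} \to \delta$ together with $(N-c_N)^{2}/N^{2} \to 1$ and $c_N/N \to 0$, both bounds converge to $\delta$, proving $L^{\pi}_N/N^{2} \to \delta$.

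The main (and essentially only) obstacle is controlling the contribution of the ``mixed'' positions where the row or column corresponds to a complement element. This is handled precisely by choosing $N_j$ to grow quickly enough that $c_N = o(N)$: the $2Nc_N$ mixed positions then contribute only $o(N^{2})$ to the support count, which is negligible against the leading term. Once the permutation is set up correctly, the remainder is routine bookkeeping.
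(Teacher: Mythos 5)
Your proposal is correct and follows essentially the same route as the paper's proof: the paper also places the complement subsequence at the sparse positions $2^{k-1}$, obtains the identical sandwich $L_{N-k} \le L'_N \le L_{N-k} + 2kN$, and passes to the limit using $k/N \to 0$. Your only additions (explicitly treating the finite-complement case and the general positions $N_j$ with $c_N = o(N)$) are minor refinements of the same argument, the latter matching the paper's footnote on which permutations suffice.
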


\begin{proof}
  Let $\seq{e_{l_k}}$ be the complement subsequence of $\seq{e_{m_k}}$.
  Let $P_N$ denote the projection onto $\bigvee_{j=1}^N e_{m_j}$.
  Let $L_N$ denote the number of support entries of the compression of $T$ to $P_N \Hil$.
  Of course, by hypothesis $L_N/N^2 \to \delta$ as $N \to \infty$.

  Now define a permutation\footnotemark{} $\pi$ of $\mathbb{N}$ by
  \footnotetext{%
    For the proof, it suffices that the permutation $\pi$ satisfies $k/\pi^{-1}(l_k) \to 0$ and both $\pi^{-1}(l_k)$ and $\pi^{-1}(m_k)$ are strictly increasing.
  }
  \begin{equation*}
    \pi(n) :=
    \begin{cases}
      l_k & n = 2^{k-1}, \\
      m_{n - k} & 2^{k-1} < n < 2^k. \\
    \end{cases}
  \end{equation*}
  Let $Q_N$ denote the projection onto $\bigvee_{j=1}^N e_{\pi(j)}$.
  Notice that if $2^{k-1} \le N < 2^k$ (i.e., $k = \lfloor\log_2(N)\rfloor + 1$), then
  \begin{equation*}
    \{e_{\pi(1)},\ldots,e_{\pi(N)}\} = \{e_{l_1},\ldots,e_{l_k}\} \cup \{e_{m_1},\ldots,e_{m_{N - k}}\}.
  \end{equation*}
  Then let $L'_N$ denote the number of support entries of the compression of $T$ to $Q_N$, then
  \begin{equation*}
    L_{N-k} \le L'_N \le L_{N-k} + 2kN.
  \end{equation*}
  Notice that $\lim_{N \to \infty} \frac{N-k}{N} = 1$, and hence $\lim_{N \to \infty} \frac{L_{N-k}}{N^2} = \delta$.
  Consequently, by this and the above display, $L'_N/N^2 \to \delta$ as $N \to \infty$, and so $T$ has density $\delta$ in the basis $\seq{e_{\pi(n)}}$.
\end{proof}

\begin{corollary}
  \label{cor:row-column-finite-density-zero}
  Suppose that $T$ is a matrix form and $\seq{e_n}$ is an orthonormal basis of $\Hil$ in which the support lengths of all the rows and columns of $T$ are finite.
  Then there is a permutation $\pi$ of $\mathbb{N}$ for which $T$ has density $0$ relative to the basis $\seq{e_{\pi(n)}}$.
\end{corollary}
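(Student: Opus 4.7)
The plan is to reduce the statement to \Cref{thm:subsequence-density-permutation} with $\delta = 0$: it will suffice to exhibit a subsequence $\seq{e_{m_k}}_{k=1}^{\infty}$ such that the compression of $T$ to $\bigvee_{j=1}^{k} e_{m_j}$ has density $0$.

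For each $n \in \mathbb{N}$, let $R(n)$ denote the row support length of row $n$ of $T$ (the largest $j$ such that $(n,j)$ is a support entry, or $0$ if none), and let $C(n)$ denote the column support length of column $n$; both are finite by hypothesis. I would construct the subsequence $\seq{m_k}$ greedily: set $m_1 := 1$ and, having chosen $m_1 < \cdots < m_k$, choose any integer $m_{k+1}$ strictly greater than
\begin{equation*}
\max\{m_k,\, R(m_1),\, C(m_1),\, \ldots,\, R(m_k),\, C(m_k)\}.
\end{equation*}

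The key claim will be that for $i \ne j$ the position $(m_i, m_j)$ is \emph{not} a support entry. Indeed, suppose $i < j$; then by the construction $m_j \ge m_{i+1} > R(m_i)$, so row $m_i$ has no support entry in column $m_j$. The case $j < i$ is symmetric using $C$ in place of $R$. Consequently, in the $N \times N$ compression to $\bigvee_{j=1}^{N} e_{m_j}$ every support entry lies on the diagonal, so $L_N \le N$ and hence $D_N \le 1/N \to 0$. An appeal to \Cref{thm:subsequence-density-permutation} with $\delta = 0$ now produces the desired permutation $\pi$.

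The argument has no substantive obstacle; the only point that needs care is ensuring $m_{k+1}$ dominates \emph{every} previously encountered row and column support length (not merely the most recent), which is what allows the $R$-bound and $C$-bound to jointly exclude all off-diagonal positions. Since the hypothesis of the corollary gives finiteness of $R(n)$ and $C(n)$ for each individual $n$, no uniform bound is needed and the inductive choice is always possible.
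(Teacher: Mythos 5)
Your proposal is correct and follows essentially the same route as the paper: exhibit a subsequence along which the compression of $T$ is diagonal (hence density $0$) and then invoke \Cref{thm:subsequence-density-permutation}; your greedy construction of $\seq{m_k}$ simply makes explicit the step the paper leaves as an assertion.
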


\begin{proof}
  Since all the rows and columns of $T$ have finite support lengths, there is a subsequence $\seq{e_{m_k}}$ for which the compression of $T$ to $\bigvee_{k=1}^{\infty} e_{m_k}$ is diagonal, and therefore has density $0$ relative to this basis for the subspace.
  By \Cref{thm:subsequence-density-permutation}, there is a permutation $\pi$ of $\mathbb{N}$ for which $T$ has density $0$ relative the basis $\seq{e_{\pi(n)}}$.
\end{proof}

\begin{corollary}
  \label{cor:all-density-zero}
  For every operator $T\in\BH$, there is some orthonormal basis $\seq{e_n}$ in which $T$ has density $0$.
\end{corollary}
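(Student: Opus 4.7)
The plan is to combine the universal staircase form of \Cref{t1} with the permutation principle of \Cref{cor:row-column-finite-density-zero}, which together handle the entire problem in essentially two lines.

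First, I would apply \Cref{t1} to the given operator $T \in \BH$ together with any initial orthonormal basis. This produces a new orthonormal basis $\seq{f_n}$ in which the matrix of $T$ takes the staircase form \eqref{E0}, with row and column support lengths bounded by $\seq{3n}_{n=1}^{\infty}$. In particular, every row and every column of the matrix of $T$ in the basis $\seq{f_n}$ has only finitely many nonzero entries (i.e., finite support length).

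Next, I would invoke \Cref{cor:row-column-finite-density-zero} with this basis $\seq{f_n}$. Since all rows and columns of the matrix of $T$ in this basis have finite support lengths, the corollary immediately produces a permutation $\pi$ of $\mathbb{N}$ such that $T$ has density $0$ relative to the orthonormal basis $\seq{f_{\pi(n)}}$. Setting $e_n := f_{\pi(n)}$ finishes the proof.

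There is essentially no obstacle here because the heavy lifting has already been done in the earlier results: \Cref{t1} supplies the staircase form for an arbitrary operator, and \Cref{cor:row-column-finite-density-zero} encapsulates the permutation trick (which in turn extracts a diagonal subsequence along rows/columns of finite support and uses \Cref{thm:subsequence-density-permutation} to spread it out sparsely enough to force $L'_N/N^2 \to 0$). The only thing worth emphasizing in the write-up is that the finiteness of each individual row and column support length in \eqref{E0} is exactly the hypothesis needed to invoke \Cref{cor:row-column-finite-density-zero}, so no additional analysis is required.
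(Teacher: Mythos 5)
Your proposal is correct and is essentially identical to the paper's own proof: the paper also obtains the staircase form from \Cref{t1} (giving finite row and column support lengths) and then applies \Cref{cor:row-column-finite-density-zero} to permute that basis into one of density $0$. No gaps; nothing further is needed.
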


\begin{proof}
  By \Cref{cor:row-column-finite-density-zero}, we may take the basis to be a permutation of the basis arising from any staircase form for $T$.
\end{proof}

\section{Constraints on the Pearcy--Topping problem from $s$-numbers and $\BH$-ideals}
\label{sec6}

Much of this paper is focused on some internal matrix structure constraints on the search for compacts $C,Z$ when solving $T = CZ-ZC$.
In this section, we provide some elementary $s$-number quantitative constraints (that more generally  apply to finite sums of bi-products of operators) and some consequent $\BH$-ideal algebraic qualitative constraints with some examples and some problems.
For these we need the characterization: compact operators are those whose $s$-number sequence tends to 0.
Unlike the past sections concentrating on the innards of certain matrix representations, some of the conditions presented in this section are independent of bases representations of the operators $C,Z$.
We also provide an $s$-number and ideal analysis of Anderson's original contruction for the rank one projection $P = CZ-ZC$ along with an analysis of the relationship between his $C$ (where $C = C_1 + C_{-1}$) and its two block upper and lower diagonals $C_1$, $C_{-1}$ and their principal ideals $\ideal{C}$, $\ideal{C_{1}}$, and $\ideal{C_{-1}}$, and similarly for $Z$.
This analysis originated in \cite{LW-2021-OM}.
A direct $s$-number approach provides necessary quantitative $s$-number conditions for $T = CZ-ZC$ (see \Cref{$s$-number proposition}).

We will not assume that the reader is familiar with the theory of $\BH$-ideals or $s$-numbers.
The $s$-numbers of a compact operator $T$ are defined as $s_n(T) := \inf\{\norm{T-F} \mid \rank F < n\}$ for $n \in \mathbb{N}$, and it is well-known that the sequence $s(T) := \seq{s_n(T)}_{n=1}^{\infty}$ lists the eigenvalues of $\abs{T}$ with multiplicity in nonincreasing order (omitting the eigenvalue $0$ if $T$ has infinite rank).
Unless otherwise indicated, all operations on sequences described in this section are to be performed pointwise, e.g., $\sqrt{s(T)} := \seq{\sqrt{s_n(T)}}$.
Given sequences $\lambda,\mu$, we let $\lambda \vee \mu, \lambda \wedge \mu$ denote their pointwise maximum and minimum, respectively.

For this section we will recall the Calkin--Schatten characterization for $\BH$-ideals in terms of characteristic sets (see for instance \cite[Section 2.7 end]{DFWW-2004-AM}).
To any proper $\BH$-ideal $\J$ one can associate its \emph{characteristic set}
\begin{equation*}
  \charset(\J) := \{ s(T) \mid T \in \J \}.
\end{equation*}
Then $\charset(\J)$ is a subcone of the cone of nonnegative nonincreasing sequences converging to zero, and moreover this subcone is hereditary (under the pointwise order) and closed under the $k$-ampliation\footnotemark{} operator $D_k(\seq{a_n}) := \seq{a_{\lceil n/k \rceil}}$.
Likewise, given any such subcone $\charset$, one can define the associated ideal
\begin{equation*}
  \J_{\charset} := \{ T \in \KH \mid s(T) \in \charset \}.
\end{equation*}
The map $\J \mapsto \charset(\J)$ is an order (and semiring) isomorphism from the (subset) lattice of proper operator ideals into the (subset) lattice of hereditary subcones closed under $2$-ampliation, and its inverse is $\charset \mapsto \J_{\charset}$.
For details consult \cite[p. 26, Theorem 12]{Sch-1970}

\footnotetext{%
  Since $\charset(\J)$ is hereditary, it is closed under $2$-ampliation if and only if it is closed under $k$-ampliation for any $k \ge 2$.%
}

The Calkin--Schatten characterization often significantly simplifies analysis in the theory of operator ideals and allows for ideal constructions by defining the corresponding construction on characteristics sets;
this is a key feature often exploited in \cite{DFWW-2004-AM}.
As an example, any operator ideal $\J$ has a natural well-defined square root $\J^{1/2}$ whose characteristic set is given by $\charset(\J^{1/2}) := \{ \sqrt{s(T)} \mid s(T) \in \charset(\J) \}$, which is a hereditary subcone closed under ampliations.

Recall two basic $s$-number inequalities \cite[Corollary 2.2]{GK-1969-ITTTOLNO}:
\begin{equation*}
  s_{n+m-1}(AB) \le s_n(A)s_m(B) \quad \text{and}\quad s_{n+m-1}(A+B) \le s_n(A) + s_m(B),
\end{equation*}
which by straightforward induction arguments establish:
\begin{equation*}
  s_{m_1 + \cdots + m_k - (k-1)}\left(\prod^k_{i=1} A_i\right)  \le  \prod^k_{i=1}s_{m_i}(A_i) \quad \text{and} \quad  s_{m_1 + \cdots + m_k - (k-1)}\left(\sum^k_{i=1} A_i\right)  \le  \sum^k_{i=1}s_{m_i}(A_i).
\end{equation*}
For $m \in \mathbb{N}$, we may divide $m$ by $k$ to get a quotient $q = \lceil m/k \rceil$ with nonpositive remainder $-r$ with $0 \le r < k$, so that $m = qk - r \ge qk - (k-1)$.
Then from the above display, we obtain:
\begin{equation*}
  s_m(A_1 \cdots A_k) = s_{qk - r}(A_1 \cdots A_k) \le s_{qk - (k-1)}(A_1 \cdots A_k) \le s_q(A_1) \cdots s_q(A_k),
\end{equation*}
and similarly for $s_m(A_1 + \cdots + A_k)$.
Therefore since $q = \lceil m/k \rceil$ and $D_k(\seq{a_m}) = \seq{a_{\lceil m/k \rceil}}$,
\begin{equation}
  \label{eq:ampliation-product-sum-s-numbers}
  s(A_1 \cdots A_k) \le D_k(s(A_1) \cdots s(A_k)) \quad\text{and}\quad s(A_1 + \cdots + A_k) \le D_k(s(A_1) + \cdots + s(A_k)).
\end{equation}

\subsection*{A quantitative $s$-number sequence analysis for $T = CZ -ZC$.}

\hfill

\noindent
Clearly, $T \in \ideal{CZ} + \ideal{ZC} \subseteq \ideal{C} \ideal{Z}$ is a qualitative constraint, but an explicit quantitative constraint is:

\begin{proposition}
  \label{$s$-number proposition}
  If $T = \sum_{i=1}^k C_i Z_i$, with each $C_i, Z_i$ compact, then
  \begin{equation*}
    s(T) \le \sum_{i=1}^k D_{2k} (s(C_i) s(Z_i)).
  \end{equation*}

  In particular, if $T = CZ-ZC$ for $C,Z$ both compact, then $s(T) \le 2 D_4(s(C) s(Z))$.
\end{proposition}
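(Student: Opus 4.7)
The plan is to combine the two inequalities in \eqref{eq:ampliation-product-sum-s-numbers} directly. First I would apply the product inequality (with $k=2$) to each factor $C_i Z_i$ to obtain
\begin{equation*}
s(C_i Z_i) \le D_2(s(C_i)\, s(Z_i)) \quad \text{for each } i = 1, \ldots, k.
\end{equation*}
Then I would apply the sum inequality (with $k$ terms) to $T = \sum_{i=1}^k C_i Z_i$ to obtain
\begin{equation*}
s(T) \le D_k\!\left(\sum_{i=1}^k s(C_i Z_i)\right).
\end{equation*}
Substituting the first bound into the second and using the fact that $D_k$ is a reindexing operator (so it commutes with pointwise addition of sequences) would yield
\begin{equation*}
s(T) \le \sum_{i=1}^k D_k(D_2(s(C_i)\, s(Z_i))).
\end{equation*}

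The key remaining step is the composition identity $D_k \circ D_2 = D_{2k}$, which follows from the pointwise computation $\lceil \lceil n/k \rceil / 2 \rceil = \lceil n/(2k) \rceil$, a standard identity for nested ceiling functions of a positive integer argument. Combined with the previous display, this gives the general bound $s(T) \le \sum_{i=1}^k D_{2k}(s(C_i)\, s(Z_i))$.

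For the commutator special case, I would write $T = CZ - ZC$ as $T = C_1 Z_1 + C_2 Z_2$ with $C_1 = C$, $Z_1 = Z$, $C_2 = -Z$, $Z_2 = C$, so $k=2$. Since $s(-Z) = s(Z)$ and the $s$-numbers are pointwise multiplied, both summands in the general bound collapse to $D_4(s(C)\, s(Z))$, giving $s(T) \le 2 D_4(s(C)\, s(Z))$.

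I do not anticipate any real obstacle here: the hardest part is simply the bookkeeping around the ampliation composition identity and the observation that $D_k$ passes through finite sums since it is merely a reindexing. Once those are in hand, the result is immediate from the two $s$-number inequalities already established in the excerpt.
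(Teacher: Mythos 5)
Your proposal is correct and follows essentially the same route as the paper: both apply the sum and product inequalities from \eqref{eq:ampliation-product-sum-s-numbers}, use that ampliations distribute over pointwise sums and that $D_k \circ D_2 = D_{2k}$, and handle the commutator case via $s(-Z) = s(Z)$ with $k=2$. No gaps.
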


\begin{proof}
  By \eqref{eq:ampliation-product-sum-s-numbers}, and noting that the ampliation operations are semiring endomorphisms and $D_{2k} = D_k \circ D_2$,
  \begin{equation*}
    s(T) = s \left( \sum_{i=1}^k C_i Z_i \right) \le D_k \left( \sum_{i=1}^k s (C_i Z_i) \right) \le D_k \left( \sum_{i=1}^k D_2 (s(C_i) s(Z_i)) \right) = D_{2k} \left( \sum_{i=1}^k s(C_i) s(Z_i) \right).
  \end{equation*}
  For $T = CZ - ZC$, since $s(-Z) = s(Z)$, the above yields $s(T) \le 2 D_4(s(C) s(Z))$.
\end{proof}

\begin{remark}
  \label{sums of bi-products}
  Since \Cref{$s$-number proposition} applies equally well to $T = CZ + ZC$, and even more generally to finite sums of bi-products, this proposition does not distinguish commutators on its own.
\end{remark}

In any case, recalling historical facts on sums of commutators:
Every bounded operator is the sum of two commutators of bounded operators (Brown--Pearcy and Brown--Halmos--Pearcy, see \cite[p.\,248, paragraph below Problem 1, and citations]{PT-1971-MMJ});
Every compact operator is the finite sum of commutators of compact operators, i.e., $\KH = \comm{\KH}{\KH}$ \cite[Theorem 1]{PT-1971-MMJ};
and from this in \cite[Corollary~6.5]{DFWW-2004-AM} it follows that every compact operator is the sum of four commutators, i.e., $\KH = \comm{\KH}{\KH}_4$, although \cite[Corollary~6.5]{DFWW-2004-AM} applies more generally to the commutator space $\comm{\I}{\J} = \comm{\I}{\J}_4$ for arbitrary operator ideals $\I,\J$.
For $\KH$ specifically, in fact even more is known: every compact operator is the sum of two commutators of compact operators, which is due to Fan and Fong in \cite{FF-1987-PRIA}.

So for $T \in \KH$, one has $T = \sum^2_{i = 1}\comm{C_i}{Z_i}$.
For these operators applying \Cref{$s$-number proposition} one obtains
\begin{equation}
  \label{eq:D8-constraint}
  s(T) \le 2 D_8 (s(C_1) s(Z_1) + s(C_2) s(Z_2)).
\end{equation}
While we don't know even for all strictly positive operators $T$ that a representation as a single commutator exists (although we provided a class of such strictly positive compact operators in \Cref{sec3}), we note that if $T = CZ - ZC$, then \Cref{$s$-number proposition} affords the alternate constraint
\begin{equation}
  \label{eq:D4-constraint}
  s(T)  \le 2 D_4 (s(C)s(Z)).
\end{equation}

\subsection*{A qualitative analysis of $\BH$-ideal constraints and open questions}

The previous quantitative sharpening of the constraints has no bearing on the following ideal analysis, but the qualitative implications of the $s$-number inequalities do.

A purely algebraic approach tells us: $T = CZ-ZC \Rightarrow T \in \ideal{C}\ideal{Z}$;
indeed, ideal product is a commutative operation, and also $\ideal{T} \subseteq \ideal{C}\ideal{Z} = \ideal{\diag (s(C)s(Z))}$ (see \cite[Section 2.8 and Theorem 6.3]{DFWW-2004-AM}).

Thus we have the following interesting question for compact $C,Z$:

\begin{question}
  When is it possible to choose $C,Z$ such that $T = CZ-ZC$ and $\ideal{T} = \ideal{C}\ideal{Z}$?
\end{question}

For the remainder of this subsection, we analyze several different natural scenarios.
Note that if $T = \comm{C}{Z}$, then $\ideal{T} \subseteq \ideal{CZ} + \ideal{ZC} \subseteq \ideal{C} \ideal{Z} \subseteq \ideal{C} \cap \ideal{Z}$.
In general, any of these ideal inclusions may be proper.
We note that there are known instances in which equality (even $\ideal{T} = \ideal{C} \cap \ideal{Z}$) can be achieved, but also established instances for which the containment is forced to be proper (even $\ideal{T} \subsetneq \ideal{CZ} + \ideal{ZC}$).

\begin{example}
  \label{ex:containment-equality}
  Given any sequence $\seq{d_n}$, the operator $T = \diag{\seq{d_1, -d_1, d_2, -d_2, \ldots}}$ can be written as the commutator
  \begin{equation*}
    T = \comm{C}{Z} := \comm*{\bigoplus \sqrt{d_n}\left(\begin{matrix} 0 & 1\\ 0 & 0 \end{matrix}\right)}{\bigoplus \sqrt{d_n}\left(\begin{matrix} 0 & 0\\ 1 & 0 \end{matrix}\right)}
  \end{equation*}
  (This is an example of broader phenomena, selfcommutators $\comm{A}{A^*} = A^*A-AA^*$ of a compact operator $A$ which have been characterized by P.~Fan and C.~Fong \cite[Theorem 1]{FF-1980-PAMS}.)
  Of course, when $\seq{d_n}$ is nonnegative nonincreasing and converging to zero, then $T$ is compact, and moreover, one can check that $s(T) = D_2 (\seq{d_n})$ and $s(C) = s(Z) = \seq{\sqrt{d_n}}$.
  Consequently, by appeal to characteristic sets, we easily obtain $(T) = (C)(Z)$.

  In fact, certain choices of $\seq{d_n}$, for example $d_n := 2^{-n}$, even yield $(T) = (C) = (Z)$.
  Indeed, in this case, notice that $\sqrt{d_n} = 2^{-n/2} \le 2^{-\lfloor n/2 \rfloor} = 2^{1/2} \cdot 2^{-\lceil n/2 \rceil}$, and hence $s(C) = s(Z) \le \sqrt{2} \seq{2^{-\lceil n/2 \rceil}} = \sqrt{2} s(T)$.
  Thus $C,Z \in \ideal{T}$, hence equality of the corresponding ideals.
  Of course, in general, $(C)(Z) \subseteq (C) \cap (Z)$ is a proper inclusion (e.g., $d_n = 1/n$, because $s(C) \neq O(D_k(s(T)))$ for any $k$), so we would rarely expect these three principal ideals to be equal, or even merely $\ideal{C} \ideal{Z} = \ideal{C} \cap \ideal{Z}$.
\end{example}

In the previous example, whenever the operator $T$ is trace-class (i.e., $\seq{d_n}$ absolutely summable), its trace is zero.
This may lead one to wonder about trace-class $T$ with nonzero trace.
We now concern ourselves primarily with this situation.

\begin{example}
  \label{ex:trace-class-nonzero-trace}
  Suppose that $T = \comm{C}{Z}$ is trace-class with $C,Z \in \BH$.
  If either of $CZ, ZC$ are trace-class, then both would be (since $T$ is), and hence \cite[Lemma~2.1]{LNRR-1981-JRAM} guarantees $\trace(CZ) = \trace(ZC)$, so $\trace(T) = \trace(\comm{C}{Z}) = 0$.
  Therefore, if $\trace(T) \neq 0$, then we must have $CZ, ZC \notin \traceclass$.
  Consequently, $T$ trace-class with $\trace(T) \neq 0$ implies $\ideal{T} \subsetneq \ideal{CZ} + \ideal{ZC} \subseteq \ideal{C}\ideal{Z}$.
\end{example}

\begin{remark}
  \label{rem:intersection-trace-class}
  A sufficient condition for $CZ,ZC \in \schatten{1}$ (and hence $\trace(T) = \trace(\comm{C}{Z}) = 0$) is $\ideal{C} \cap \ideal{Z} \subseteq \schatten{1}$.
  One may wonder whether the inclusion $\ideal{C} \cap \ideal{Z} \subseteq \schatten{1}$ is even possible unless either $C$ or $Z$ is itself trace-class.
  We remark that this is indeed possible even if $C,Z \notin \schatten{1}$.

  First, it is elementary in the theory of operator ideals using characteristic sets to prove that $\ideal{C} \cap \ideal{Z} = \ideal{\diag (s(C) \wedge s(Z))}$.
  So, it suffices to exhibit nonsummable sequences $c,z \in \czstar$ ($=s(C),s(Z)$) for which $c \wedge z$ is summable.

  We construct $c,z$ as follows.
  For $k \in \mathbb{N}$, set $n_k := k(k+1)/2$, and set $s_k := \sum_{j=1}^k 2^{n_j}$ and for convenience set $s_0 := 0$.
  Then define,
  \begin{gather*}
    c_m := 2^{-n_{2k} + 1}
    \quad\text{for}\quad
    s_{2(k-1)} < m \le s_{2k}, k \in \mathbb{N} \\
    z_m :=
    \begin{cases}
      1 & 1 \le m \le s_1 \\
      2^{-n_{2k+1} + 1}  & s_{2k-1} < m \le s_{2k+1}, k \in \mathbb{N} \\
    \end{cases}
  \end{gather*}
  Notice that, by construction,
  \begin{equation*}
    (c \wedge z)_m = 2^{-n_{k+1} + 1}
    \quad\text{for}\quad
    s_{k-1} < m \le s_k, k \in \mathbb{N}.
  \end{equation*}
  Therefore,
  \begin{equation*}
    \sum_{m=1}^{\infty} (c \wedge z)_m = \sum_{k=1}^{\infty} \sum_{m=s_{k-1} + 1}^{s_k} (c \wedge z)_m = \sum_{k=1}^{\infty} \sum_{m=s_{k-1} + 1}^{s_k} 2^{-n_{k+1} + 1} = \sum_{k=1}^{\infty} 2^{n_k} 2^{-n_{k+1} + 1} = \sum_{k=1}^{\infty} 2^{-k} = 1.
  \end{equation*}
  On the other hand,
  \begin{equation*}
    \sum_{m=1}^{\infty} c_m = \sum_{k=1}^{\infty} \sum_{m=s_{2(k-1)} + 1}^{s_{2k}} 2^{-n_{2k} + 1} = \sum_{k=1}^{\infty} (2^{n_{2k-1}} + 2^{n_{2k}}) 2^{-n_{2k} + 1} \ge \sum_{k=1}^{\infty} 2 = \infty,
  \end{equation*}
  and
  \begin{equation*}
    \sum_{m=1}^{\infty} z_m \ge \sum_{k=1}^{\infty} \sum_{m=s_{2k-1} + 1}^{s_{2k+1}} 2^{-n_{2k+1} + 1} = \sum_{k=1}^{\infty} (2^{n_{2k}} + 2^{n_{2k+1}}) 2^{-n_{2k+1} + 1} \ge \sum_{k=1}^{\infty} 2 = \infty.
  \end{equation*}
\end{remark}

\begin{example}
  \label{ex:brown-trace-zero}
  The earlier \Cref{ex:trace-class-nonzero-trace} implies that if $C \in \schatten{p}, Z \in \schatten{q}$ (Schatten ideals) and $T = \comm{C}{Z}$ has nonzero trace, then $1/p + 1/q < 1$ (or equivalently, if $1/p + 1/q \ge 1$, then $T = \comm{C}{Z}$ has zero trace).
  Rather interestingly, Brown proved \cite{Bro-1994-JRAM} that when the commutator $T = \comm{C}{Z}$ is finite rank, then even more can be said, namely, if $1/p + 1/q \ge 1/2$ (hence $CZ,ZC \in \schatten{2}$), then $\trace(T) = 0$.

  In case $P$ is a rank-one projection and $P = \comm{C}{Z}$ then at least one of $C,Z \notin \schatten{p}$ for all $p \le 4$.
  In Anderson's construction with $t = 1/2$, \cite[Theorem~3]{And-1977-JRAM} establishes $C,Z \in \bigcap_{p > 4} \schatten{p}$, so in some sense it appears that Anderson's construction may be optimal.
  In \Cref{lem:anderson-ideal} below, we compute explicitly the principal ideal $\ideal{C} = \ideal{Z}$ arising from Anderson's construction with $t = 1/2$.
  It would be interesting to know if Anderson's construction really is minimal (see \Cref{que:anderson-minimal}).
\end{example}

The above examples lead us to pose the following questions:

\begin{question}
  \label{que:positive-always-proper}
  If $T = \comm{C}{Z}$ is positive and not trace-class, is the inclusion $\ideal{T} \subseteq \ideal{C}\ideal{Z}$ always proper?
\end{question}

\begin{question}
  \label{que:hilbert-schmidt-commutator-trace-zero}
  If $T = \comm{C}{Z}$ is trace-class and $CZ,ZC \in \schatten{2}$, do these conditions imply $\trace(T) = 0$?
\end{question}

\Cref{que:positive-always-proper} is motivated by \Cref{ex:containment-equality,ex:trace-class-nonzero-trace}.
In particular, \Cref{ex:containment-equality} shows that in order to guarantee the inclusion is proper in the case when $T = \comm{C}{Z}$ is not trace-class, one needs some additional hypotheses.
Given that all the operators $T$ from \Cref{ex:trace-class-nonzero-trace} in some sense have ``trace zero'' (because the positive and negative parts are unitarily equivalent), \Cref{ex:trace-class-nonzero-trace} suggests the natural test case is when this is asymmetric, i.e., $T$ is positive and not trace-class.

\Cref{que:hilbert-schmidt-commutator-trace-zero} is motivated from \Cref{ex:brown-trace-zero} and \cite{Bro-1994-JRAM}.
In the proof in \cite{Bro-1994-JRAM}, the Schatten class assumption of the individual operators $C, Z$ was only used to bound Hilbert--Schmidt norms associated to submatrices of the products $CZ, ZC$, suggesting that maybe there is another approach which replaces this assumption only with $CZ, ZC \in \schatten{2}$.
Likewise, the finite rank assumption on $T$ was already partially loosened in \cite{Bro-1994-JRAM} to $T$ trace-class with $\comm{C}{C^{*}}$ finite rank.

Before we compute explicitly the principal ideals $\ideal{C}, \ideal{Z}$ arising from Anderson's construction $P = [C,Z]$ for a rank-one projection $P$, we first recall for the reader some basic facts about nonnegative nonincreasing sequences converging to zero which will be used below.
For readers intimately familiar with the theory of operator ideals, these facts will be well-known;
for others, they should be comprehensible with a bit of thought.

Suppose $\lambda = \seq{\lambda_n}, \mu = \seq{\mu_n}$ are positive sequences converging to zero and suppose $\lambda \le \mu$.
Then if $\lambda^{\star}, \mu^{\star}$ denote their nonincreasing rearrangements, $\lambda^{\star} \le \mu^{\star}$ (when a sequence has infinite support, any zeros in the sequence are omitted in its nonincreasing rearrangement).
Moreover, the collection $\czstar$ of nonnegative nonincreasing sequences converging to zero has an \emph{internal direct sum} (cf. \cite[\textsection 2.4]{DFWW-2004-AM}) operation given by $\lambda, \mu \in \czstar \mapsto (\lambda \oplus \mu)^{\star}$, where $(\lambda \oplus \mu)_{2n} := \lambda_n$ and $(\lambda \oplus \mu)_{2n+1} := \mu_n$.
We note that $\lambda \vee \mu \le (\lambda \oplus \mu)^{\star} \le D_2(\lambda \vee \mu)$.

In what follows, we will also be concerned with finite sequences.
Suppose that for each $n \in \mathbb{N}$, $a_n$ denotes some finite sequence.
Then $\bigoplus_n a_n$ is the concatenation of these sequences.
If the finite sequences $a_n$ are nonnegative and, for any $\epsilon > 0$, there are only finitely many $n$ such that any term in $a_n$ exceeds $\epsilon$, then $\bigoplus_n a_n$ is nonnegative and converges to zero.
In this case, we note that the monotonization and $k$-ampliation operators ($(\cdot)^{\star}$ and $D_k(\cdot)$) are well-behaved with respect to the direct sum.
More precisely,
\begin{equation*}
  D_k \left( \left( \bigoplus a_n \right)^{\star} \right) = \left( \bigoplus D_k(a_n) \right)^{\star}.
\end{equation*}
Of course, $D_k$ applied to a sequence of length $m$ produces a sequence of length $km$.

\begin{lemma}
  \label{lem:anderson-ideal}
  For Anderson's commutator construction for the rank-one projection $P = CZ-ZC$, 
  $\ideal{C} = \ideal{Z} = \ideal{\diag\seq{1/\sqrt[4]{n}}}$.
\end{lemma}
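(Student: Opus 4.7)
The plan is to identify $\ideal{C} = \ideal{Z} = \ideal{\diag\seq{1/\sqrt[4]{n}}}$ by decomposing each of Anderson's operators into its upper and lower block-shift pieces, computing the singular values of these pieces via a counting-function argument, and then using a single diagonal sign-flip unitary to show the pieces lie in the principal ideal of the whole.

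First I would write $C = C_1 + C_{-1}$, where $C_1$ carries the off-diagonal blocks $A_n$ from \eqref{eq6}--\eqref{eq5} and $C_{-1}$ carries the blocks $B_n$.  Because distinct $A_n$'s (respectively $B_n$'s) have mutually row-and-column disjoint supports, $C_1^*C_1$ and $C_{-1}^*C_{-1}$ are block diagonal with blocks $A_{n}^*A_{n}$ and $B_n^*B_n$, each of which is a diagonal matrix because $A_n, B_n$ are ``rectangular diagonal'' with nonzero entries $a_{k,n}^2=(n+1-k)/n^2$ and $b_{k,n}^2=k/(n+1)^2$.  Consequently the multiset of nonzero singular values of $C_1$ is $\{\sqrt{j}/n:n\ge 1,\;1\le j\le n\}$ (substituting $j=n+1-k$), and that of $C_{-1}$ is the same multiset with $n$ replaced by $n+1$.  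A direct Riemann-sum estimate yields the counting function
\begin{equation*}
  N(t):=\#\{\sigma>t\}=\sum_{n<1/t^2}\bigl(n-\lceil t^2n^2\rceil+1\bigr)\sim\frac{1}{6\,t^{4}}\quad\text{as }t\to 0,
\end{equation*}
and inverting gives $s_m(C_{\pm1})\asymp m^{-1/4}$, so $\ideal{C_1}=\ideal{C_{-1}}=\ideal{\diag\seq{1/\sqrt[4]{n}}}$.  Since $C=C_1+C_{-1}$, one obtains $\ideal{C}\subseteq\ideal{\diag\seq{1/\sqrt[4]{n}}}$; the same argument applied to $X_n,Y_n$ (whose entries have the same magnitudes, only permuted in position) gives $\ideal{Z}\subseteq\ideal{\diag\seq{1/\sqrt[4]{n}}}$.

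For the reverse inclusions I would introduce the diagonal $\pm1$ unitary $U$ that multiplies the $k$-th basis vector within each of Anderson's blocks by $(-1)^k$.  Tracking the parities of the row/column indices of the nonzero entries produces $U^*CU=C_1-C_{-1}$: the $A_n$-entries lie at positions $(k,k)$, preserving the parity of $k$, while the $B_n$-entries lie at $(k{+}1,k)$, which flip it, so conjugation by $U$ fixes $C_1$ and negates $C_{-1}$.  The same computation for $Z$ gives $U^*ZU=-Z_1+Z_{-1}$ because in $Z$ it is the $X_n$ blocks (at positions $(k,k{+}1)$) that flip parity while the $Y_n$ blocks (at positions $(k,k)$) preserve it.  In both cases $C_{\pm1}=\tfrac12(C\pm U^*CU)$ and $Z_{\mp1}=\tfrac12(Z\pm U^*ZU)$, all of which lie in the respective principal ideal because $\ideal{U^*CU}=\ideal{C}$ and $\ideal{U^*ZU}=\ideal{Z}$ by unitary invariance of singular values.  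Combined with the first paragraph, this forces $\ideal{C}=\ideal{Z}=\ideal{\diag\seq{1/\sqrt[4]{n}}}$.

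The main obstacle is the splitting step: a priori there is no reason that the two shift pieces $C_{\pm1}$ should individually lie in the principal ideal of their sum $C$.  The structural feature of Anderson's construction that rescues the argument is that the nonzero entries of $A_n$ and $B_n$ (and symmetrically $X_n$ and $Y_n$) live on two different ``parity diagonals'' of the sparse block pattern, so a single diagonal sign-flip unitary separates them by conjugation.  Once this observation is made, the remaining ingredients --- the block-direct-sum form of $C_{\pm1}^*C_{\pm1}$, the explicit multiset of singular values, the counting-function asymptotic, and unitary invariance of principal ideals --- are all routine.
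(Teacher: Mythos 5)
Your proposal is correct, but it reaches the lemma by a genuinely different route than the paper in two places. For the splitting step the paper does not construct any unitary: it invokes \Cref{thm:bandable} (quoted from \cite{LW-2021-OM}), which for any compact operator in block tridiagonal form gives $\ideal{C}=\ideal{C_{-1}}+\ideal{C_0}+\ideal{C_1}$, and here $C_0=0$; your blockwise sign-flip $U$ with $U^{*}CU=C_1-C_{-1}$ and $U^{*}ZU=-Z_1+Z_{-1}$ is a valid, self-contained substitute (the parity bookkeeping checks out against \eqref{eq5}), but it succeeds only because Anderson's central blocks vanish and the $A_n$, $B_n$ supports occupy different parity diagonals, whereas \Cref{thm:bandable} requires no such structure and also handles nonzero central blocks and arbitrary block sizes, which the paper needs elsewhere (e.g.\ \Cref{blockdiag corollary} and \Cref{prop:inclusion-diag-omega}). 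For the size of the singular values, the paper stays entirely inside the characteristic-set calculus: it records that the squared singular values of $C_{\pm1},Z_{\pm1}$ are the monotonization $c^{\star}$ of $c=\bigoplus_n\seq{(n-k+1)/n^2}_{k=1}^n$, compares $c^{\star}$ via $D_2$-ampliation and direct-sum monotonization with $a=\bigoplus_n\seq{1/n}_{k=1}^n$ and then with $\seq{1/\sqrt{n}}$, obtaining $\ideal{C}^2=\ideal{\diag\seq{1/\sqrt{n}}}$ and finishing with an ideal square root; you instead compute the counting function $N(t)\sim 1/(6t^4)$ and invert to $s_m\asymp m^{-1/4}$, which is equally legitimate (two-sided bounds for large $m$ suffice for equality of principal ideals, and finitely many terms are irrelevant). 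A small imprecision, harmless to the argument: the singular-value multiset of $C_{-1}$ is $\{\sqrt{j}/(n+1):1\le j\le n\}$, which is the shifted $C_1$ multiset \emph{minus} the top value $1/\sqrt{n+1}$ in each block; the discrepancy contributes only $O(t^{-2})$ to $N(t)$, so your asymptotic stands. In sum, your route is more elementary and avoids the external citation; the paper's route buys generality of the splitting principle and an exact comparison with $\diag\seq{1/\sqrt{n}}$ that dovetails with the ideal-theoretic questions surrounding \Cref{que:anderson-minimal}.
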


\begin{proof}
  In the notation of \Cref{thm:bandable} below, $C_{\pm 1}, Z_{\pm 1}$ denote the upper and lower block diagonal operators of $C,Z$, respectively.
  Moreover, by \Cref{thm:bandable}, we have $\ideal{C} = \ideal{C_1} + \ideal{C_{-1}}$ because $C_0 = 0$, and similarly for $Z$.
  A simple computation shows that $C_{\pm 1}^{*} C_{\pm 1}$ and $Z_{\pm 1}^{*} Z_{\pm 1}$ are diagonal, and hence these diagonal entries are precisely the singular values $s(C_{\pm 1}^{*} C_{\pm 1}) = s(\abs{C_{\pm 1}}^2) = s(C_{\pm 1})^2$, and similarly for $s(Z_{\pm 1})$.

  For each $n \in \mathbb{N}$, consider the finite sequence $c_n := \seq{(n-k+1)/n^2}_{k=1}^n$.
  Since none of the terms in $c_n$ exceed $1/n$, it is clear that $c := \bigoplus_n c_n$ is nonnegative and converges to zero.
  Examination of the definitions of $C_{\pm 1}, Z_{\pm 1}$ (see \cite[p. 7]{BPW-2014-VLOT} or the original \cite[pp. 129--130]{And-1977-JRAM} after setting his parameter $t = 1/2$) shows that the nonzero diagonal entries of each of $C^{*}_{\pm 1} C_{\pm 1}, Z^{*}_{\pm 1} Z_{\pm 1}$ are precisely given by the sequence $c$ (up to permutation).
  Therefore,
  \begin{equation*}
    s(Z_{\pm 1})^2 = s(C_{\pm 1})^2 = c^{\star},
    \quad\text{hence}\quad
    \ideal{Z_{\pm 1}}^2 = \ideal{C_{\pm 1}}^2 = \ideal{\diag c^{\star}}.
  \end{equation*}
  And thus
  \begin{equation*}
    \ideal{Z} = \ideal{C} = \ideal{C_{-1}} + \ideal{C_1} = \ideal{C_1} = \ideal1{\diag \sqrt{c^{\star}}}.
  \end{equation*}
  
  Now, note that $2 D_2(c_n)$ is a finite sequence of length $2n$, and we may let $d_n, d_n'$ denote the sequences of length $n$ so that $2 D_2(c_n)$ is the concatenation $d_n \oplus d'_n$ of $d_n$ followed by $d_n'$.
  We remark that $d_n \ge d_n'$ (since $2 D_2(c_n)$ is nonincreasing).
  We let $d := \bigoplus_n d_n$, $d' := \bigoplus_n d'_n$;
  clearly, $d \ge d'$ and so $d^{\star} \ge d'^{\star}$.
  Moreover, notice that $d_n \ge a_n := \seq{1/n}_{k=1}^n$.
  Therefore, $d \ge a := \bigoplus_n a_n$, and so $d^{\star} \ge a^{\star} = a$.

  Putting all this together yields
  \begin{align*}
    2 D_2(c^{\star})
    &= \left(\bigoplus 2 D_2(c_n) \right)^{\star}
      = \left( \bigoplus (d_n \oplus d_n') \right)^{\star} \\
    &= (d \oplus d')^{\star}
      = (d^{\star} \oplus d'^{\star})^{\star}
      \ge d^{\star} \vee d'^{\star} \\
    &\ge d^{\star}
      \ge a^{\star} = a.
  \end{align*}
  In addition, we remark that $c_n \le a_n$, and hence $c \le a$, so $c^{\star} \le a^{\star} = a$.
  These inequalities (namely, $c^{\star} \le a$ and $a \le 2D_2(c^{\star})$) guarantee that the characteristic sets generated by $c^{\star}$ and $a$ coincide, or equivalently, the principal ideals $\ideal{\diag c^{\star}}$ and $\ideal{\diag a}$ are equal.

  Consider now the sequence $\seq{1/\sqrt{n}} \in \czstar$.
  We show that $\seq{1/\sqrt{n}}$ and $a$ generate the same characteristic set
  Given $n \in \mathbb{N}$, there is some $k \in \mathbb{N}$ for which $k(k+1)/2 \le n < (k+1)(k+2)/2$, and hence the $n$-th term of $a$ lies in the finite constant sequence $a_k$, and therefore takes the value $1/k$.
  Dividing the inequalities through by $k^2$, we see that
  \begin{equation*}
    \frac{1}{2} \le \frac{1}{2} + \frac{1}{2k} \le \frac{n}{k^2} < \left(\frac{1}{2} + \frac{1}{2k}\right)\left(1 + \frac{2}{k}\right) \le 3.
  \end{equation*}
  Consequently, $a \le \sqrt{3} \seq{1/\sqrt{n}}$ and $\seq{1/\sqrt{n}} \le \sqrt{2} a$, and therefore these sequences generate the same characteristic set.

  Finally, we conclude
  \begin{equation*}
    \ideal{C}^2 = \ideal{\diag c^{\star}} = \ideal{\diag a} = \ideal{\diag \seq{1/\sqrt{n}}},
  \end{equation*}
  and therefore
  \begin{equation*}
    \ideal{Z} = \ideal{C} = \ideal1{\diag \sqrt{c^{\star}}} = \ideal{\diag \seq{1/\sqrt[4]{n}}}. \qedhere
  \end{equation*}
\end{proof}

\Cref{lem:anderson-ideal} along with \Cref{ex:brown-trace-zero} lead us to pose the following question regarding minimality of the ideals $\ideal{C}, \ideal{Z}$ arising from a commutator representation $P = \comm{C}{Z}$ of a rank-one projection.

\begin{question}
  \label{que:anderson-minimal}
  Let $P$ be a rank-one projection.
  Do there exist $C,Z$ with $P = \comm{C}{Z}$ for which both $\ideal{C},\ideal{Z} \subseteq \ideal{\diag(1/\sqrt[4]{n}}$ and at least one inclusion is proper?
\end{question}

In the previous question, we explicitly ask for \emph{both} principal ideal inclusions.
The reason for this is that it is possible to obtain an asymmetric solution were one of the principal ideal inclusions is proper, whereas the other doesn't hold.
Indeed, from Anderson \cite[Theorem~3]{And-1977-JRAM}, it is clear that for any $1/2 < t < 1$, one may obtain operators $C_t,Z_t$ for which $P = \comm{C_t}{Z_t}$ and $\ideal{C_t} \subseteq \schatten{2/t} \subsetneq \ideal{\diag\seq{1/\sqrt[4]{n}}}$.
However, in this case one cannot have $Z_t \subseteq \ideal{\diag\seq{1/\sqrt[4]{n}}} \subseteq \bigcap_{p > 4} \schatten{p}$ because it would violate the result of Brown \cite{Bro-1994-JRAM} mentioned in \Cref{ex:brown-trace-zero}.

This problem comes from similar ideas studied in \cite[Section 7]{DFWW-2004-AM} and the open question there for ideals $\I$: If $\comm{\I}{ \BH}_1$ (or $\comm{\I^{1/2}}{ \I^{1/2}}_1$) contains $P$ (or some finite rank nonzero trace operator), must $\diag\seq{1/\sqrt{n}} \in  \I$?
Also worthy of note is Pearcy--Topping's \cite[Problem 2]{PT-1971-MMJ} on Schatten-$p$ commutators: Is $\comm{\schatten{2p}}{ \schatten{2p}}_1 = \schatten{p}$ for $p >1$, to which Brown provided the negative solution \cite{Bro-1994-JRAM} when $p \leq 2$.
The case $p > 2$ remains open.

\subsection*{Matrix splitting and $\BH$-ideals}

For this subsection we mention two special case theorems whose much more general versions are in  \cite[Theorems 4 and 8]{LW-2021-OM} auxiliary to this work, and \Cref{thm:bandable} in particular was used in the proof of \Cref{lem:anderson-ideal} above.

Our universal block tridiagonal matrices mentioned throughout this paper can be described by means of a sequence of orthogonal projections summing to the identity $\seq{P_n}_{n \in \mathbb{N}}$ via its block diagonals $T_0, T_{\pm 1}$, that is, $T = T_{-1}+T_0+T_1$ where $T_0 := \sum_{n=1}^\infty P_nTP_n, T_1 := \sum_{n=1}^\infty P_nTP_{n+1}$ and $T_{-1} := \sum_{n=1}^\infty P_{n+1}TP_n$.
The exponential dimensions of our projections are prescribed in \Cref{t1,T1,t11}, but the next theorem for block tridiagonal matrices also extends naturally to block banded matrices of arbitrary bandwidth and block sizes.

\begin{theorem}[\protect{\cite[Theorem 4]{LW-2021-OM}}]
  \label{thm:bandable}
  Let $T \in \KH$ be represented in any of its universal block tridiagonal forms with $T_0, T_{\pm 1}$ representing its central, upper and lower block diagonal matrices, so $T = T_{-1}+T_0+T_1$.
  Then each $T_0, T_{\pm 1} \in \ideal{T}$ and the following equality of ideals holds:
  \begin{equation*}
    \ideal{T} = \ideal{T_{-1}}+\ideal{T_0}+\ideal{T_1}.
  \end{equation*}
\end{theorem}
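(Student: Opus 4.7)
The plan is to realize each of $T_0, T_{\pm 1}$ as a finite linear combination of unitary conjugates of $T$ via a discrete Fourier trick. Since every $\BH$-ideal is closed under unitary equivalence and under finite sums, this immediately yields $T_0, T_{\pm 1} \in \ideal{T}$; then the displayed equality of ideals follows at once from $T = T_{-1}+T_0+T_1$, since that identity gives $\ideal{T} \subseteq \ideal{T_{-1}}+\ideal{T_0}+\ideal{T_1}$ while the reverse inclusion is a consequence of $T_j \in \ideal{T}$.

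Concretely, I would introduce the phase-shift unitary $U := \sum_{n=1}^{\infty} i^{n} P_n$, defined in the strong operator topology, which is well defined and unitary because the $P_n$ are pairwise orthogonal projections summing to $I$ and $\abs{i^n}=1$. A direct check shows that $U$ commutes with every $P_m$ with $UP_m = i^m P_m$ and $U^* P_m = i^{-m} P_m$. Consequently, for all $m,n$,
\begin{equation*}
  P_m (U T U^*) P_n = i^{\,m-n}\, P_m T P_n,
\end{equation*}
and so $U^j T U^{*j} = T_0 + i^{-j} T_1 + i^{\,j} T_{-1}$ for every $j \in \mathbb{Z}$. A discrete Fourier inversion over the fourth roots of unity then extracts each diagonal individually:
\begin{equation*}
  T_0 = \tfrac{1}{4}\sum_{j=0}^{3} U^j T U^{*j},\quad T_1 = \tfrac{1}{4}\sum_{j=0}^{3} i^{\,j}\, U^j T U^{*j},\quad T_{-1} = \tfrac{1}{4}\sum_{j=0}^{3} i^{-j}\, U^j T U^{*j}.
\end{equation*}

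Each right-hand side above is a finite linear combination of unitary conjugates of $T$, so $T_0, T_{\pm 1} \in \ideal{T}$, which completes the argument as outlined in the first paragraph. I do not anticipate a serious obstacle here: the only subtleties are verifying that the SOT sum defining $U$ genuinely produces a unitary operator (routine once one notes that $\sum P_n = I$ and the $P_n\Hil$ are mutually orthogonal) and that the Fourier-inversion identities hold after a block-by-block comparison of coefficients. The argument uses block tridiagonality only via the three-term decomposition $T = T_{-1} + T_0 + T_1$, so it adapts verbatim to the block-$(2k+1)$-diagonal setting by averaging instead over $(2k+2)$-nd roots of unity, which is presumably the form of the general result stated in \cite{LW-2021-OM}.
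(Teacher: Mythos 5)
Your argument is correct. The averaging identity is right: with $U=\sum_n i^nP_n$ one gets $P_m(U^jTU^{*j})P_n=i^{j(m-n)}P_mTP_n$, hence $U^jTU^{*j}=T_0+i^{-j}T_1+i^{j}T_{-1}$, and since the exponents $-1,0,1$ are distinct mod $4$, the discrete Fourier inversion over fourth roots of unity recovers each $T_0,T_{\pm1}$ as a finite linear combination of unitary conjugates of $T$; as every two-sided $\BH$-ideal is a linear subspace closed under $A\mapsto UAU^{*}$, this gives $T_0,T_{\pm1}\in\ideal{T}$, and the stated equality then follows from $T=T_{-1}+T_0+T_1$ exactly as you say. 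Note, however, that the paper does not reprove this statement: it is quoted verbatim from \cite[Theorem 4]{LW-2021-OM}, so there is no in-text proof to compare against; your note that the same averaging over $(2k+2)$-nd roots of unity handles arbitrary finite bandwidth matches the paper's remark that the cited result extends to block banded matrices of arbitrary bandwidth. It is worth emphasizing that your argument uses finite bandedness in an essential way (a finite group of phases suffices to separate the finitely many block diagonals); for a full block matrix no such finite averaging exists, and indeed the paper points out that in that generality (\cite[Theorem 9]{LW-2021-OM}) the conclusion is genuinely weaker, involving the arithmetic mean closure of the ideal, so your method is exactly calibrated to the banded setting and could not be expected to extend further.
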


An elementary consequence is: Let $CZ - ZC = D$ with $C$ compact and $Z$ bounded.
\begin{enumerate}
\item \label{item:bandable-big-o} Then $D \in \ideal{C}$, i.e., $\ideal{D} \subseteq \ideal{C}$, or equivalently, $s(D) = O(D_ks(C))$ for some $k$-ampliation $D_k$.
\item \label{item:bandable-tridiagonal} In any basis where $C$ is block tridiagonal, i.e., $C = C_{-1} + C_0 + C_1$, one has $C_{-1}, C_0, C_1 \in \ideal{C}$.
  And so each have $s$-number sequences satisfying the above $O$-condition in \ref{item:bandable-big-o} for some $k$.
\item If we simultaneously block tridiagonalize $C, D$ or all three operators (which can be achieved by \Cref{t11}), the same can be said for  each of them as in \ref{item:bandable-tridiagonal}.
\end{enumerate}

Regarding the Pearcy--Topping problem equivalences mentioned earlier (\Cref{equivthm}), we have
\begin{corollary}
  \label{blockdiag corollary}
  For $T = CZ-ZC$ with all three operators compact, when they are simultaneously put in universal block tridiagonal form relative to any basis, one has the constraints: $\ideal{T} \subseteq \ideal{C}\ideal{Z}$ along with $\ideal{T} = \ideal{T_{-1}}+\ideal{T_0}+\ideal{T_1}$.
  And the latter equality holds likewise for $C$ and $Z$.

  Another mentioned equivalence (see \Cref{equivthm}) where diagonal $D = CZ-ZC$, when $C,Z$ are simultaneously put in universal block tridiagonal form relative to the basis diagonalizing $D$, then $D$ is a block $5$-diagonal matrix and one has the constraints $\ideal{D} \subseteq \ideal{C}\ideal{Z}$ where $\ideal{D} = \ideal{D_{-2}} + \ideal{D_{-1}}+\ideal{D_0}+\ideal{D_1} + \ideal{D_2}$, $\ideal{C} = \ideal{C_{-1}}+\ideal{C_0}+\ideal{C_1}$ and $\ideal{T} = \ideal{Z_{-1}}+\ideal{Z_0}+\ideal{Z_1}$.
\end{corollary}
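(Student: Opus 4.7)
The plan is to assemble three ingredients already stated in the paper: the simultaneous universal block tridiagonalization of \Cref{t11}, the ideal splitting of \Cref{thm:bandable}, and the standard algebraic inclusion $\ideal{AB}\subseteq\ideal{A}\ideal{Z}$ valid for compact $A,B$. The inclusion $\ideal{T}\subseteq\ideal{C}\ideal{Z}$ in both assertions is purely algebraic: from $T=CZ-ZC$ one has $T\in\ideal{CZ}+\ideal{ZC}$, and because the product ideal $\ideal{C}\ideal{Z}$ is a two-sided $\BH$-ideal containing both $CZ$ and $ZC$, we obtain $\ideal{T}\subseteq\ideal{CZ}+\ideal{ZC}\subseteq\ideal{C}\ideal{Z}$. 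This part is universal and does not depend on any basis choice.

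For the first assertion, I would invoke \Cref{t11} with $N=3$ applied to the family $\{T,C,Z\}$, producing an orthonormal basis $\seq{f_n}$ in which all three operators are simultaneously block tridiagonal with central block size sequence $k_1=1$ and $k_n=6\cdot 7^{n-2}$ for $n>1$. Decomposing each operator as the sum of its central, upper, and lower block diagonals (a decomposition that lives inside $\BH$ by boundedness of the relevant projection sums), \Cref{thm:bandable} applied separately to $T$, $C$, $Z$ yields the three principal ideal identities $\ideal{T}=\ideal{T_{-1}}+\ideal{T_0}+\ideal{T_1}$, $\ideal{C}=\ideal{C_{-1}}+\ideal{C_0}+\ideal{C_1}$, and $\ideal{Z}=\ideal{Z_{-1}}+\ideal{Z_0}+\ideal{Z_1}$, which together with the algebraic inclusion above finishes the first part.

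For the second assertion, I would begin from the basis $\seq{e_n}$ that diagonalizes $D$ and invoke \Cref{t11} with $N=2$ applied to the pair $\{C,Z\}$; this produces a derived basis $\seq{f_n}$ in which $C$ and $Z$ are simultaneously block tridiagonal with central block size sequence $k_1=1$, $k_n=4\cdot 5^{n-2}$ for $n>1$. The key observation is purely combinatorial: the product of two block tridiagonal matrices sharing a common block partition is automatically block pentadiagonal with the same partition, since only block positions $(i,j)$ with $\lvert i-j\rvert\le 2$ can receive nonzero contributions. Hence $CZ$, $ZC$, and therefore $D=CZ-ZC$, are block $5$-diagonal in $\seq{f_n}$. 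Writing $D=\sum_{k=-2}^{2}D_k$ accordingly, the bandwidth-extended version of \Cref{thm:bandable} (the extension to arbitrary block-banded matrices noted in the paragraph preceding \Cref{thm:bandable}) delivers $\ideal{D}=\ideal{D_{-2}}+\ideal{D_{-1}}+\ideal{D_0}+\ideal{D_1}+\ideal{D_2}$, while \Cref{thm:bandable} applied directly to the tridiagonal $C$ and $Z$ gives the three-term decompositions; the inclusion $\ideal{D}\subseteq\ideal{C}\ideal{Z}$ holds for the same algebraic reason as in the first part.

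The only point that requires care, and which I would flag as the main obstacle, is the appeal in the second assertion to the bandwidth-extended form of \Cref{thm:bandable}: the theorem is stated only for block tridiagonal operators, so one must either invoke the extension to arbitrary block bandwidth that \cite{LW-2021-OM} provides, or observe that a block pentadiagonal matrix can be regrouped (by doubling block sizes) into a block tridiagonal form to which the tridiagonal version applies directly, and then check that the resulting three-term splitting refines to the five-term splitting by a second application on each paired block. Beyond this, the entire argument is a bookkeeping assembly of \Cref{t11,thm:bandable} and the elementary ideal algebra around $\ideal{CZ}\subseteq\ideal{C}\ideal{Z}$.
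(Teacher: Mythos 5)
Your assembly is correct and is exactly what the paper intends: the first part follows by applying \Cref{t11} (with $N=3$) and \Cref{thm:bandable} to each of $T,C,Z$, the pentadiagonal splitting of $D$ follows by the same token from the bandwidth-extended version of \Cref{thm:bandable} explicitly noted by the paper in the sentence immediately preceding that theorem, and the inclusion $\ideal{T}\subseteq\ideal{C}\ideal{Z}$ is the purely algebraic observation the paper records at the start of the $\BH$-ideal subsection. Your ``regrouping'' fallback for the pentadiagonal case is unnecessary (and would take some care to make refine cleanly), but the primary route you give---invoking the banded extension from \cite{LW-2021-OM}---is precisely the one the paper relies on.
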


And for general interest, though not used in this paper, \cite[Theorem~9]{LW-2021-OM} handles the more general case where $T$ is a full block matrix, that is, not finite block banded, but the conclusion is not as strong as \Cref{thm:bandable}.
This slightly weaker conclusion obtained in \cite{LW-2021-OM} is, in general, optimal and involves the notion of the \emph{arithmetic mean closure} of the relevant ideal.

In the spirit of \Cref{sec3,sec3a} (esp., \eqref{eq6} and \Cref{T.1} regarding matrix finite full blocks), we close this matrix splitting subsection with a $\BH$-ideal consequence of boundedness of $k_n(\norm{A_n}+\norm{B_n})$ for block tridiagonal $T$ with zero central blocks.

\begin{proposition}
  \label{prop:inclusion-diag-omega}
  Suppose $T \in \BH$ be in block tridiagonal form with zero central blocks, i.e.,
  \begin{equation*}
    \label{eq:zero-block-diag}
    T =
    \begin{pmatrix}
      0   & A_1 & 0   & \cdots          \\
      B_1 & 0   & A_2 & \ddots           \\
      0   & B_2 & 0   &\ddots            \\
      \vdots & \ddots & \ddots  &\ddots \\
    \end{pmatrix},
    \quad \text{ with central blocks of size } k_n \times k_n,
  \end{equation*}
  and let $r_n := \sum_{j=1}^n k_j$ denote the partial sums, and let $r_0 := 0$.
  If the sequences of norms $\seq{\norm{A_n}}, \seq{\norm{B_n}}$ are nonincreasing and the sequence $\seq{r_n (\norm{A_n} + \norm{B_n})}$ is bounded, then $\ideal{T} \subseteq \ideal{\diag\seq{1/n}}$.

  Moreover, if $k_n = \Theta(\rho^n)$ for some $\rho > 1$, then the boundedness condition can be replaced by the boundedness of $\seq{k_n(\norm{A_n} + \norm{B_n})}$.
\end{proposition}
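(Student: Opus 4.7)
The plan is to decompose $T = T_1 + T_{-1}$ into its upper and lower block off-diagonal parts (the central blocks being zero) and to show that each of $T_1$ and $T_{-1}$ individually belongs to $\ideal{\diag\seq{1/n}}$. From this, $\ideal{T} \subseteq \ideal{T_1} + \ideal{T_{-1}} \subseteq \ideal{\diag\seq{1/n}}$ will follow immediately; this is also consistent with \Cref{thm:bandable}, once compactness of $T$ has been verified from the hypothesis (which forces $\norm{A_n}, \norm{B_n} \to 0$ since $r_n \to \infty$).

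The core step will treat $T_1$. The key observation is that the blocks $A_j$ act on pairwise orthogonal source subspaces and land in pairwise orthogonal target subspaces. Let $F_n$ denote the finite rank operator obtained from $T_1$ by setting $A_n, A_{n+1}, \dots$ to zero; then $\rank F_n \le \sum_{j < n} k_j = r_{n-1}$, and by the orthogonality of the untruncated pieces together with the nonincreasing hypothesis on $\norm{A_j}$,
\begin{equation*}
\norm{T_1 - F_n} = \sup_{j \ge n} \norm{A_j} = \norm{A_n}.
\end{equation*}
The rank-based characterization of singular numbers then yields $s_{r_{n-1}+1}(T_1) \le \norm{A_n}$. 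For an arbitrary $m \ge 1$ I would select the unique $n$ with $r_{n-1} < m \le r_n$; monotonicity of $s$-numbers combined with the hypothesis $r_n \norm{A_n} \le C$ then delivers
\begin{equation*}
s_m(T_1) \le s_{r_{n-1}+1}(T_1) \le \norm{A_n} \le C/r_n \le C/m,
\end{equation*}
so that $s(T_1) \le C\seq{1/n}$ and consequently $T_1 \in \ideal{\diag\seq{1/n}}$. The entirely symmetric argument, truncating $B_n, B_{n+1}, \dots$, produces the same conclusion for $T_{-1}$.

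For the \emph{moreover} clause, if $k_n = \Theta(\rho^n)$ with $\rho > 1$ then the geometric sum $r_n = \sum_{j=1}^n k_j$ is itself $\Theta(k_n)$, so a bound on $k_n(\norm{A_n} + \norm{B_n})$ automatically furnishes a bound on $r_n(\norm{A_n} + \norm{B_n})$ up to a constant, reducing this case to the first.

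The step requiring the most care is the identity $\norm{T_1 - F_n} = \norm{A_n}$: it exploits both the block tridiagonal (in fact block one-diagonal) structure, so that $T_1 - F_n$ is isometrically a block direct sum of the $A_j$ for $j \ge n$, and the nonincreasing hypothesis on $\norm{A_n}$, which collapses the supremum. Without the orthogonality one would at best obtain a sum bound $\sum_{j \ge n} \norm{A_j}$, which is typically far too weak to yield $1/m$ decay for $s_m(T_1)$ from the hypothesis $r_n \norm{A_n} = O(1)$ alone.
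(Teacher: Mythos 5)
Your proposal is correct, and its overall plan coincides with the paper's: split $T = T_1 + T_{-1}$ into its upper and lower block diagonals, establish $s_m(T_1) \le \norm{A_n}$ for $r_{n-1} < m \le r_n$, and combine this with $r_n\norm{A_n} = O(1)$ (plus the monotonicity hypothesis) to get $s_m(T_1) = O(1/m)$, with the symmetric argument for $T_{-1}$ and the same $r_n = \Theta(k_n)$ reduction for the exponential case. Where you differ is in how the key blockwise estimate is obtained and in what auxiliary machinery is invoked. The paper cites \Cref{thm:bandable} to write $\ideal{T} = \ideal{T_{-1}} + \ideal{T_1}$ and then uses the identity $s(T_1) = \bigl( \bigoplus_n s(A_n) \bigr)^{\star}$, majorizing it by the already-monotone sequence whose $n$-th block is constantly $\norm{A_n}$ and comparing characteristic sets; you instead observe that only the trivial inclusion $\ideal{T} \subseteq \ideal{T_1} + \ideal{T_{-1}}$ is needed (so \Cref{thm:bandable} is dispensable here), and you derive $s_{r_{n-1}+1}(T_1) \le \norm{A_n}$ directly from the definition $s_m(T_1) = \inf\{\norm{T_1 - F} : \rank F < m\}$ via the rank-$r_{n-1}$ truncation $F_n$, using the orthogonality of the block pieces and the nonincreasing norms to get $\norm{T_1 - F_n} = \norm{A_n}$. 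Your route is slightly more elementary and self-contained (no appeal to the direct-sum singular-value identity or to characteristic-set bookkeeping beyond heredity), while the paper's route fits the framework it has already set up in \Cref{lem:anderson-ideal} and \Cref{thm:bandable}; both rely on exactly the same structural facts, so the two proofs are interchangeable in substance.
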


\begin{proof}
  We begin by noting that since $r_n \to \infty$, the boundedness of $\seq{r_n (\norm{A_n} + \norm{B_n})}$ guarantees that $\norm{A_n}, \norm{B_n} \to 0$, and hence $T$ is compact.
  
  As in the proof of \Cref{lem:anderson-ideal} we apply \Cref{thm:bandable} to conclude that $\ideal{T} = \ideal{T_1} + \ideal{T_{-1}}$, so it suffices to prove $\ideal{T_{\pm 1}} \subseteq \ideal{\diag\seq{1/n}}$.
  By symmetry considerations, it suffices to establish this only for $T_1$.
  We note, as in the proof of \Cref{lem:anderson-ideal}, that\footnote{%
    The sequence $s(A_n)$ is \emph{finite} with length $\min\{k_n, k_{n+1}\}$; but we may pad it with zeros to make it have length $k_n$ in case $k_{n+1} < k_n$.}
  $s(T_1) = \left( \bigoplus s(A_n) \right)^{\star}$, and also $s(A_n) \le \seq{\norm{A_n}}_{j=1}^{k_n}$ (the latter is a constant sequence).
  Therefore,
  \begin{equation*}
    s(T_1) = \left( \bigoplus s(A_n) \right)^{\star} \le a := \bigoplus_n \left( \seq{\norm{A_n}}_{j=1}^{k_n} \right).
  \end{equation*}
  The sequence $a = a^{\star}$ is already monotone since the sequences $\seq{\norm{A_n}}, \seq{\norm{B_n}}$ are nonincreasing.

  Then let $M > 0$ denote a bound for $\seq{r_n (\norm{A_n} + \norm{B_n})}$.
  Then, for any $m \in \mathbb{N}$, there is some $n \in \mathbb{N}$ with $r_{n-1} < m \le r_n$, and by the definition of $a$, $a_m = \norm{A_n}$.
  Thus
  \begin{equation*}
    a_m = \norm{A_n} \le \frac{M}{r_n} \le \frac{M}{m}.
  \end{equation*}
  Hence $a$ (and also $s(T_1)$) lies in any characteristic set containing $\seq{1/n}$, so $\ideal{T_1} \subseteq \ideal{\diag \seq{1/n}}$.

  If, in addition, $k_n = \Theta(\rho^n)$ for some $\rho > 1$, we note that $k_n = \Theta(r_n)$ and hence $\seq{r_n(\norm{A_n} + \norm{B_n})}$ is bounded if and only if $\seq{k_n(\norm{A_n} + \norm{B_n}}$ is bounded.
\end{proof}

\bibliographystyle{plain}
\bibliography{references.bib}

\begin{thebibliography}{10}

\bibitem{And-1977-JRAM}
Joel Anderson.
\newblock Commutators of compact operators.
\newblock {\em J. Reine Angew. Math.}, 291:128--132, 1977.

\bibitem{BPW-2014-VLOT}
Daniel Beltiţă, Sasmita Patnaik, and Gary Weiss.
\newblock {$B(H)$}-commutators: a historical survey {II} and recent advances on
  commutators of compact operators.
\newblock In {\em The varied landscape of operator theory}, volume~17 of {\em
  Theta Ser. Adv. Math.}, pages 57--75. Theta, Bucharest, 2014.

\bibitem{BP-1965-AoM2}
Arlen Brown and Carl Pearcy.
\newblock Structure of commutators of operators.
\newblock {\em Ann. of Math. (2)}, 82:112--127, 1965.

\bibitem{Bro-1994-JRAM}
L.~G. {Brown}.
\newblock {Traces of commutators of Schatten-von Neumann class operators}.
\newblock {\em {J. Reine Angew. Math.}}, 451:171--174, 1994.

\bibitem{DMR}
Kenneth~R. Davidson, Laurent Marcoux, and Heydar Radjavi.
\newblock A strictly positive commutator of compacts.
\newblock preprint.

\bibitem{DFWW-2004-AM}
Ken Dykema, Tadeusz Figiel, Gary Weiss, and Mariusz Wodzicki.
\newblock Commutator structure of operator ideals.
\newblock {\em Adv. Math.}, 185(1):1--79, 2004.

\bibitem{DK-2018-PAMS}
Ken {Dykema} and Amudhan {Krishnaswamy-Usha}.
\newblock {Nilpotent elements of operator ideals as single commutators}.
\newblock {\em {Proc. Am. Math. Soc.}}, 146(7):3031--3037, 2018.

\bibitem{Fan-1951-PNASUSA}
Ky~Fan.
\newblock Maximum properties and inequalities for the eigenvalues of completely
  continuous operators.
\newblock {\em Proc. Nat. Acad. Sci., U. S. A.}, 37:760--766, 1951.

\bibitem{FF-1980-PAMS}
Peng Fan and Che-Kao Fong.
\newblock Which operators are the self-commutators of compact operators?
\newblock {\em Proc. Amer. Math. Soc.}, 80:58--60, 1980.

\bibitem{FF-1987-PRIA}
Peng {Fan} and Che-Kao {Fong}.
\newblock {Operators similar to zero diagonal operators}.
\newblock {\em {Proc. R. Ir. Acad., Sect. A}}, 87(2):147--153, 1987.

\bibitem{GK-1969-ITTTOLNO}
Israel~C. Gohberg and Mark~Grigor'evich Kreĭn.
\newblock {\em Introduction to the theory of linear nonselfadjoint operators},
  volume~18 of {\em Translations of Mathematical Monographs}.
\newblock American Mathematical Society, Providence, R.I., 1969.

\bibitem{HJ-2013-MA}
Roger~A. Horn and Charles~R. Johnson.
\newblock {\em Matrix analysis}.
\newblock Cambridge University Press, Cambridge, second edition, 2013.

\bibitem{KW-2010-JOT}
Victor {Kaftal} and Gary {Weiss}.
\newblock {Traces on operator ideals and arithmetic means.}
\newblock {\em {J. Oper. Theory}}, 63(1):3--46, 2010.

\bibitem{Kal-1989-JFA}
N.~J. {Kalton}.
\newblock {Trace-class operators and commutators}.
\newblock {\em {J. Funct. Anal.}}, 86(1):41--74, 1989.

\bibitem{LNRR-1981-JRAM}
C.~Laurie, E.~Nordgren, H.~Radjavi, and P.~Rosenthal.
\newblock On triangularization of algebras of operators.
\newblock {\em J. Reine Angew. Math.}, 327:143--155, 1981.

\bibitem{LW-2021-OM}
Jireh Loreaux and Gary Weiss.
\newblock Matrix splitting and ideals in $\mathcal{B(H)}$.
\newblock {\em Oper. Matrices}, 2021.
\newblock (to appear).

\bibitem{Pat-2012}
Sasmita Patnaik.
\newblock {\em Ideals and {C}ommutators of {O}perators}.
\newblock PhD thesis, University of Cincinnati, 2012.
\newblock Thesis (Ph.D.)--University of Cincinnati.

\bibitem{PPW-2020-TmloVLOt}
Sasmita {Patnaik}, Srdjan {Petrovic}, and Gary {Weiss}.
\newblock {Universal block tridiagonalization in \(\mathcal{B}(\mathcal{H})\)
  and beyond}.
\newblock In {\em {The mathematical legacy of Victor Lomonosov. Operator
  theory}}, pages 317--326. Berlin: De Gruyter, 2020.

\bibitem{PT-1971-MMJ}
Carl {Pearcy} and David {Topping}.
\newblock {On commutators in ideals of compact operators}.
\newblock {\em {Mich. Math. J.}}, 18:247--252, 1971.

\bibitem{Sch-1970}
R.~{Schatten}.
\newblock {\em {Norm ideals of completely continuous operators}}, volume~27.
\newblock Springer-Verlag, Berlin, 1970.

\bibitem{Wei-1975}
Gary Weiss.
\newblock {\em Commutators and Operator Ideals}.
\newblock PhD thesis, University of Michigan, 1975.
\newblock Thesis (Ph.D.)--University of Michigan.

\bibitem{Wei-1980-IEOT}
Gary Weiss.
\newblock {Commutators of Hilbert--Schmidt operators. II}.
\newblock {\em Integral Equations Operator Theory}, 3(4):574--600, 1980.

\end{thebibliography}

\end{document}